\documentclass[preprint,3p,12pt,times]{elsarticle}

\usepackage[T1]{fontenc}
\usepackage{lmodern}
\usepackage{microtype}
\usepackage{amsmath,amssymb,amsthm,mathtools}
\usepackage{enumitem}
\usepackage{aliascnt}
\usepackage{booktabs,tabularx,placeins}
\usepackage[unicode,colorlinks=true,linkcolor=blue!45!black,citecolor=blue!45!black,urlcolor=blue!45!black]{hyperref}
\usepackage[nameinlink,capitalise,noabbrev]{cleveref}

\biboptions{numbers,sort&compress}

\newtheorem{theorem}{Theorem}[section]

\newaliascnt{proposition}{theorem}
\newtheorem{proposition}[proposition]{Proposition}
\aliascntresetthe{proposition}

\newaliascnt{lemma}{theorem}
\newtheorem{lemma}[lemma]{Lemma}
\aliascntresetthe{lemma}

\newaliascnt{corollary}{theorem}
\newtheorem{corollary}[corollary]{Corollary}
\aliascntresetthe{corollary}

\theoremstyle{definition}
\newaliascnt{definition}{theorem}
\newtheorem{definition}[definition]{Definition}
\aliascntresetthe{definition}

\theoremstyle{remark}
\newaliascnt{remark}{theorem}
\newtheorem{remark}[remark]{Remark}
\aliascntresetthe{remark}

\newaliascnt{question}{theorem}
\newtheorem{question}[question]{Question}
\aliascntresetthe{question}

\newcommand{\N}{\mathbb N}
\newcommand{\Npos}{\mathbb N_{>0}}
\newcommand{\Acoord}{\mathcal A}
\newcommand{\J}{\mathbb J}
\newcommand{\RR}{\mathfrak R}
\newcommand{\DCPO}{\mathbf{DCPO}}
\newcommand{\Irr}{\operatorname{Irr}}
\newcommand{\down}{\mathord{{\downarrow}}}
\newcommand{\up}{\mathord{\uparrow}}
\newcommand{\supp}{\operatorname{supp}}
\newcommand{\Down}{\operatorname{Down}}
\newcommand{\cl}{\operatorname{cl}}
\newcommand{\rowprod}{\operatorname{row}_{\times}}
\newcommand{\rowframe}{\operatorname{row}_{F}}
\newcommand{\Pprod}{P_{\times}}
\newcommand{\Pframe}{P_F}
\newcommand{\Pstageprod}[1]{P^{\times}_{#1}}
\newcommand{\Pstageframe}[1]{P^{F}_{#1}}
\newcommand{\Tprod}{\mathcal T_{\times}}
\newcommand{\Tframe}{\mathcal T_F}
\newcommand{\hprod}{h_{\times}}
\newcommand{\hframe}{h_F}
\newcommand{\Fbot}{\bot_F}
\newcommand{\Ftop}{\top_F}

\newcommand{\Rtwo}{\mathcal R_2}

\newcommand{\two}{\mathbf 2}
\newcommand{\Atop}{\top_{\Acoord}}
\newcommand{\Abot}{\bot_{\Acoord}}
\newcommand{\Ichain}{\mathbb I}

\begin{document}

\begin{frontmatter}

\title{\texorpdfstring{$C_{\sigma}$}{C-sigma}-Unique Dcpos: Intrinsic Characterizations and Counterexamples}

\author[scu]{Yuxu Chen}
\ead{chenyuxu@scu.edu.cn}
\author[scu]{Xulong He}
\ead{hexulong@scu.edu.cn}

\affiliation[scu]{organization={School of Mathematics, Sichuan University},
  city={Chengdu},
  country={China}}

\begin{abstract}
A dcpo \(D\) is called \(C_{\sigma}\)-unique if for every dcpo \(Q\),
\(
\Gamma(D)\cong \Gamma(Q)
\) implies \(
D\cong Q,
\)
where \(\Gamma(D)\) denotes the lattice of Scott-closed subsets of \(D\).
We give an intrinsic characterization of \(C_{\sigma}\)-uniqueness in terms of Skula-density and Scott closure, identifying precisely when a proper sub-dcpo can preserve the lattice of Scott-closed sets.
Based on this characterization, we answer three open problems: (1) every power $\J^I$ of Johnstone's dcpo is $C_{\sigma}$-unique; (2) $C_{\sigma}$-uniqueness is not preserved by binary products, even when both factors and their product are sober; and (3) a sober countable complete lattice need not be $C_{\sigma}$-unique, even when it is a frame.
\end{abstract}

\begin{keyword}
Scott-closed-set lattice \sep $C_{\sigma}$-unique dcpo \sep
Skula-density \sep countable frame
\MSC[2020] 06B35 \sep 06A06 \sep 06D22 \sep 18A40 \sep 54D10
\end{keyword}

\end{frontmatter}

\section{Introduction}

For a dcpo $D$, let $\Gamma(D)$ denote the complete lattice of Scott-closed subsets, ordered by inclusion. The dcpo $D$ is $C_{\sigma}$-unique if
\[
  \Gamma(D)\cong\Gamma(Q)\quad\Longrightarrow\quad D\cong Q
\]
for every dcpo $Q$. This is the objectwise reconstruction problem initiated by Ho and Zhao \cite{HoZhao2009}. Its negative solution shows that non-isomorphic dcpos can have isomorphic Scott-closed-set lattices \cite{ho2018}. The task is therefore to identify when this lattice still determines the dcpo.

Zhao and Xu established positive results for quasicontinuous dcpos and Johnstone's dcpo, together with reconstruction criteria based on bounded sobriety and directed generation \cite{ZhaoXu2018,XuZhao2019}. Zhao subsequently isolated a number of open questions \cite{Zhao2022Survey,Zhao2025Johnstone}. Three of them are central here: whether the countable power $\J^{\N}$ of Johnstone's dcpo is $C_{\sigma}$-unique, whether binary products preserve $C_{\sigma}$-uniqueness, and whether every sober countable complete lattice is $C_{\sigma}$-unique.

We use the reflective hull $\mathcal R_2$ of the two-element chain in $\mathbf{DCPO}$. The recent work of He, Lyu, Chen and Kou proves that every dcpo $D$ has a two-element reflection $\lambda_D:D\to L_2D$ inducing $\Gamma(L_2D)\cong\Gamma(D)$, that $\mathcal R_2$ is $\Gamma$-faithful, and that every $C_{\sigma}$-unique dcpo belongs to $\Rtwo$ \cite{HeLyuChenKou2026}. 

Our first result gives an intrinsic characterization of \(C_{\sigma}\)-uniqueness. We show that a dcpo \(D\) is \(C_{\sigma}\)-unique if and only if \(D\in\mathcal R_2\) and \(D\) contains no proper sub-dcpo \(P\) satisfying
\[
  \cl_{\sigma(D)}(P\cap\down x)=\down x
  \qquad \text{and} \qquad
  \cl_{\sigma(D)}(A)\cap P=\cl_{\sigma(P)}(A)
\]
for every \(x\in D\) and \(A\subseteq P\).
The condition \(D\in\mathcal R_2\) comes from the existing \(2\)-reflection theory, while the new contribution here is the Scott-closure characterization of those sub-dcpos that preserve the lattice of Scott-closed sets. Then, building on this characterization and the accompanying reconstruction tools, we answer three open problems mentioned above.

The main results are as follows.
\begin{enumerate}[leftmargin=2.2em]
\item A dcpo $D$ is $C_{\sigma}$-unique exactly when $D\in\Rtwo$ and no proper sub-dcpo satisfies both closure conditions. The characterization is intrinsic in that, conditional on $D\in\Rtwo$, it tests sub-dcpos of $D$ rather than arbitrary comparison dcpos.
\item Local domination and directed generation by $C_{\sigma}$-unique elements suffice for reconstruction. An arbitrary-product criterion then proves that $\J^I$ is $C_{\sigma}$-unique for every set $I$.
\item There is a countable sober $C_{\sigma}$-unique dcpo $K$ such that $K\times(0,1]$ is sober but not $C_{\sigma}$-unique. The witnessing sub-dcpo is countable.
\item There are a sober countable frame $F$ and a countable dcpo $\Pframe$ with $\Gamma(F)\cong\Gamma(\Pframe)$ but $F\not\cong\Pframe$.
\end{enumerate}

Section~2 recalls the required preliminaries. Section~3 gives the closure criterion and the intrinsic characterization. Section~4 develops the independent local reconstruction argument and applies it to powers of Johnstone's dcpo. Sections~5 and~6 give the counterexamples. Section~7 summarizes the answers to the cited questions and states two further problems.

\section{Preliminaries}\label{sec:preliminaries}

Standard terminology follows \cite{gierz2003,goubault2013}. 
For a poset $P$ and $A\subseteq P$, write
\(
  \down A=\{x:\exists a\in A,\ x\leq a\} \) and \(
  \up A=\{x:\exists a\in A,\ a\leq x\}.
\)
All directed subsets are supposed to be nonempty. We use
\(
  \mathbb N=\{0,1,2,\ldots\},\
  \mathbb N_{>0}=\{1,2,\ldots\}.
\)
A poset is a dcpo if every directed subset has a supremum. Its Scott topology is denoted by $\sigma(P)$, its Scott space by $\Sigma P$, and its complete lattice of Scott-closed subsets by $\Gamma(P)$. A map between dcpos is Scott continuous if and only if it is monotone and preserves directed suprema. We write $\mathbf{DCPO}$ for the category of dcpos and Scott-continuous maps, and $[P\to Q]$ for the pointwise ordered dcpo of Scott-continuous maps when $Q$ is a dcpo.

A nonempty subset $A$ of a topological space is \emph{irreducible} if
$A\subseteq F\cup G$, with $F$ and $G$ closed, implies
$A\subseteq F$ or $A\subseteq G$. A dcpo is \emph{sober} if every
nonempty irreducible Scott-closed subset is principal. It is \emph{bounded sober} if this holds for every such subset having an upper bound; equivalently, every principal ideal is sober \cite[Section~3]{XuZhao2019}.
We write
\[
  \Irr(\Gamma(D))=
  \{A\in\Gamma(D):A\neq\varnothing\text{ and }A
       \text{ is irreducible in }\Sigma D\}.
\]
Equivalently, these are the nonzero finite join-prime elements of the lattice $\Gamma(D)$. This notation does not refer to irreducible closed subsets of the Scott space of $\Gamma(D)$.

A full subcategory $\mathcal C$ of $\DCPO$ is \emph{$\Gamma$-faithful} if
\[
  D,Q\in\mathcal C,\quad \Gamma(D)\cong\Gamma(Q)
       \quad\Longrightarrow\quad D\cong Q.
\]
This is an objectwise property, not faithfulness on morphisms. In contrast, $C_{\sigma}$-uniqueness of an object $D$ compares it with every dcpo $Q$.

A subset $E$ of a dcpo $D$ is a \emph{sub-dcpo} if it is closed under directed suprema computed in $D$.
For a poset $D$ and finite subsets $F,G\subseteq D$, write $F\leq_{\mathrm{Sm}}G$ if
$\up G\subseteq\up F$; this is the Smyth preorder
\cite[Definition~III-3.2]{gierz2003}.

Let $D$ be a dcpo. A finite subset $F\subseteq D$ is \emph{way below} $x\in D$, written $F\ll x$, if for every directed $E\subseteq D$,
\(
  x\leq\bigvee E\) implies \( E\cap\up F\neq\varnothing.
\)
Put $\operatorname{fin}_D(x)=\{F\subseteq D:F\text{ is finite and }F\ll x\}$. The dcpo $D$ is \emph{quasicontinuous} if $\operatorname{fin}_D(x)$ is directed in the Smyth preorder and
\(
  \up x=\bigcap_{F\in\operatorname{fin}_D(x)}\up F
\)
for every $x\in D$. If $D$ is quasicontinuous, $U\in\sigma(D)$ and $x\in U$, then some finite $F\ll x$ satisfies $\up F\subseteq U$ \cite[Proposition~III-3.6]{gierz2003}.
A dcpo is \emph{locally quasicontinuous} if every principal ideal, with the inherited order, is quasicontinuous.
Every quasicontinuous dcpo is sober \cite[Proposition~III-3.7]{gierz2003} and $C_{\sigma}$-unique \cite[Lemma~3.8]{ZhaoXu2018}. Finite products preserve quasicontinuity and local quasicontinuity \cite[Remark~3.11(i)]{XuZhao2019}.

We begin with several basic lemmas that will be used throughout the paper.

\begin{lemma}\label{lem:closed-subdcpo}
Let $D$ be a dcpo and let $C\in\Gamma(D)$ be nonempty. Then
\[
  \Gamma(C)=\{F\in\Gamma(D):F\subseteq C\}
            =\down_{\Gamma(D)}C.
\]
Moreover,
\[
  \Irr(\Gamma(C))=\{A\in\Irr(\Gamma(D)):A\subseteq C\}
          =\Irr(\Gamma(D))\cap\down_{\Gamma(D)}C.
\]
\end{lemma}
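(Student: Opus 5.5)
The plan is to first identify the Scott topology of $C$, with the inherited order, with the subspace topology that $C$ inherits from $\Sigma D$. Then read off both equalities from this identification.

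\textbf{Step 1: $C$ is a sub-dcpo and directed suprema agree.} Since $C$ is a lower set closed under directed suprema in $D$, every directed $E\subseteq C$ has $\bigvee_D E\in C$. This element is also the supremum of $E$ in $C$. So $C$ is a dcpo, and its directed suprema are computed as in $D$.

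\textbf{Step 2: the first equality.} Let $F\in\Gamma(C)$. Then $F$ is a lower set in $C$, and hence in $D$, because $C$ is itself a lower set in $D$. Any directed $E\subseteq F$ has its $D$-supremum equal to its $C$-supremum, which lies in $F$. So $F\in\Gamma(D)$ and $F\subseteq C$.

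Conversely, let $F\in\Gamma(D)$ with $F\subseteq C$. Then $F$ is a lower set of $C$ and is closed under the directed suprema of $C$, by Step 1. Hence $F\in\Gamma(C)$.

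The equality $\{F\in\Gamma(D):F\subseteq C\}=\down_{\Gamma(D)}C$ is just the definition of the principal ideal in the inclusion-ordered lattice $\Gamma(D)$. Consequently $\Gamma(C)$ is exactly the principal ideal of $\Gamma(D)$ generated by $C$, and finite unions in the two lattices coincide.

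\textbf{Step 3: irreducibles.} Take a nonempty $A\in\Gamma(C)$, which by Step 2 is also in $\Gamma(D)$. I would show that $A$ is irreducible in $\Sigma C$ if and only if it is irreducible in $\Sigma D$.

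Suppose $A$ is irreducible in $\Sigma D$, and let $A\subseteq F\cup G$ with $F,G\in\Gamma(C)$. By Step 2, $F,G\in\Gamma(D)$, so $A\subseteq F$ or $A\subseteq G$.

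Suppose instead $A$ is irreducible in $\Sigma C$, and let $A\subseteq F\cup G$ with $F,G\in\Gamma(D)$. Then $A\subseteq (F\cap C)\cup(G\cap C)$. Here $F\cap C$ and $G\cap C$ lie in $\Gamma(D)$, since $\Gamma(D)$ is closed under intersections, and they are contained in $C$. So by Step 2 they belong to $\Gamma(C)$. Irreducibility in $\Sigma C$ then gives $A\subseteq F\cap C\subseteq F$ or $A\subseteq G$.

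This yields $\Irr(\Gamma(C))=\{A\in\Irr(\Gamma(D)):A\subseteq C\}$. Intersecting with the description $\down_{\Gamma(D)}C$ from Step 2 gives the last expression.

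\textbf{Main obstacle.} The only subtle point is the passage from $F\in\Gamma(D)$ to $F\cap C$ in the second direction of Step 3. It is handled by the closure of $\Gamma(D)$ under intersections together with Step 2. Everything else is routine verification.
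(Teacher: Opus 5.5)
Your proposal is correct and follows essentially the same route as the paper. The first equality comes from lower sets of $C$ being lower in $D$ and from directed suprema agreeing in $C$ and $D$. The irreducibility statement comes from the traces $F\cap C$ of Scott-closed sets of $D$ being exactly the Scott-closed sets of $C$; you just write out both directions of that equivalence, which the paper compresses into one sentence.
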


\begin{proof}
A subset $F\subseteq C$ is lower in $C$ if and only if it is lower in $D$,
because $C$ is lower. Directed subsets of $F$ have the same suprema in $C$
and $D$. This proves the first equality. The second follows because
irreducibility is tested against finite unions of Scott-closed sets, whose
restrictions to $C$ are precisely the Scott-closed sets of $C$.
\end{proof}

\begin{lemma}\label{lem:principal-directed-join}
Let $A$ be a directed subset of a dcpo $D$. Then
\[
  \bigvee_{\Gamma(D)}\{\down_Da:a\in A\}=\down_D\bigvee A.
\]
\end{lemma}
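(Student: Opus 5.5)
The join in $\Gamma(D)$ of a family of Scott-closed sets is the Scott closure of their union. So the claim amounts to
\[
  \cl_{\sigma(D)}\Bigl(\bigcup_{a\in A}\down_D a\Bigr)=\cl_{\sigma(D)}(\down_D A)=\down_D\bigvee A .
\]
I will prove the two inclusions separately. Throughout I write $s=\bigvee A$, which exists because $A$ is directed and $D$ is a dcpo.

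For ``$\subseteq$'': every $a\in A$ satisfies $a\le s$, so $\down_D A\subseteq\down_D s$. A principal ideal $\down_D s$ is Scott closed. It is a lower set, and if a directed set $E$ lies in $\down_D s$, then $s$ is an upper bound of $E$, so $\bigvee E\le s$. Hence $\down_D s$ is an upper bound of the family in $\Gamma(D)$, and the join is contained in it.

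For ``$\supseteq$'': let $C=\cl_{\sigma(D)}(\down_D A)$. Then $C$ is Scott closed, so it is closed under directed suprema of its subsets. Since $A\subseteq C$ and $A$ is directed, $s\in C$. Because $C$ is a lower set, $\down_D s\subseteq C$.

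Combining the two inclusions gives the equality. No step is a real obstacle; the only point needing care is that joins in $\Gamma(D)$ are Scott closures of unions rather than plain unions. That is exactly why the directed supremum $s$, which need not lie in the union, enters the join.
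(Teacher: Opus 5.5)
Your proof is correct and matches the paper's argument: $\down_D\bigvee A$ is a Scott-closed upper bound of the family, and any Scott-closed set containing every $\down_D a$ contains $A$, hence $\bigvee A$, hence $\down_D\bigvee A$. Writing the join as $\cl_{\sigma(D)}(\down_D A)$ instead of as the least Scott-closed upper bound is only a cosmetic difference.
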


\begin{proof}
The right-hand side is a Scott-closed upper bound of the family. Conversely,
any Scott-closed set containing every $\down_Da$ contains $A$, hence contains
$\bigvee A$ and therefore $\down_D\bigvee A$.
\end{proof}

Every directed subset of a countable poset has an increasing cofinal
sequence. Indeed, enumerate it as $d_0,d_1,\ldots$, allowing repetitions if
it is finite, and choose recursively $c_0=d_0$ and $c_{n+1}$ above both
$c_n$ and $d_{n+1}$. Consequently, a lower subset of a countable dcpo is
Scott closed if and only if it is closed under suprema of increasing
sequences.

\subsection{Skula topology and Skula-density}\label{subsec:skula-topology}

Let $X$ be a $T_0$ topological space with topology $\tau$.
The \emph{Skula topology} of $X$, denoted by $\tau_S$, is the smallest
topology containing both the open subsets and the closed subsets of
$(X,\tau)$ \cite{Skula1969}. For $A\subseteq X$, write
$\cl_{\tau_S}(A)$ for the closure of $A$ in $(X,\tau_S)$. The subset $A$
is called \emph{Skula-dense} in $X$ if $\cl_{\tau_S}(A)=X$.
For a dcpo $D$ and a subset $A\subseteq D$, we write
$\cl_{\sigma(D)}(A)$ for the closure of $A$ in the Scott topology
$\sigma(D)$. Equivalently, $\cl_{\sigma(D)}(A)$ is the least lower
subset of $D$ containing $A$ and closed under suprema of nonempty directed
subsets.

The pointwise criterion for Skula-density has the following form for Scott spaces.

\begin{lemma}\label{lem:Skula-density-equivalence}
Let $D$ be a dcpo and let $P\subseteq D$ be any subset. Then $P$ is Skula-dense in $\Sigma D$ if and only if
\[
  \cl_{\sigma(D)}(P\cap\down_Dx)=\down_Dx
  \qquad(x\in D).
\]
\end{lemma}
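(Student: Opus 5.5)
The plan is to reduce Skula-density to a statement about basic Skula-open sets and then translate the Scott-closure condition into it pointwise. The Skula topology has as a base the sets $U\cap C$ with $U\in\sigma(D)$ and $C$ Scott closed. These sets are closed under finite intersections, since opens and closeds each are, and they cover $D$. Hence $P$ is Skula-dense if and only if every nonempty set $U\cap C$ of this form meets $P$. Principal ideals $\down_Dx$ are Scott closed. So the smallest basic neighbourhoods of a point $x$ have the form $U\cap\down_Dx$ with $U$ Scott open and $x\in U$: given $x\in U\cap C$, we have $\down_Dx\subseteq C$, so $U\cap\down_Dx\subseteq U\cap C$. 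Therefore density reduces to the requirement that, for all $x\in D$ and all Scott-open $U\ni x$, we have $U\cap\down_Dx\cap P\neq\varnothing$.

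For the direction ($\Leftarrow$), fix $x$ and a Scott-open $U\ni x$. Suppose $U\cap P\cap\down_Dx=\varnothing$. Then $P\cap\down_Dx\subseteq D\setminus U$, which is Scott closed, so $\cl_{\sigma(D)}(P\cap\down_Dx)\subseteq D\setminus U$. By hypothesis this closure equals $\down_Dx$, so it contains $x\in U$, a contradiction. Thus every basic nonempty Skula-open set meets $P$, and $P$ is Skula-dense.

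For the direction ($\Rightarrow$), fix $x$. The inclusion $\cl_{\sigma(D)}(P\cap\down_Dx)\subseteq\down_Dx$ holds because $\down_Dx$ is Scott closed. For the reverse inclusion it suffices to show that $x$ lies in the closure, since closures are lower sets. Suppose it does not, and put $U=D\setminus\cl_{\sigma(D)}(P\cap\down_Dx)$, which is Scott open and contains $x$. Then $U\cap\down_Dx$ is a nonempty Skula-open set, so by density it meets $P$. But any point of $P\cap\down_Dx$ lies in the closure, not in $U$, a contradiction.

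The only mildly delicate point is the first paragraph: the identification of the Skula base and the reduction to neighbourhoods of the form $U\cap\down_Dx$. This uses that $\down_Dx$ is Scott closed and is contained in every closed set containing $x$. The rest is routine.
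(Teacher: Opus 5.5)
Your proposal is correct and follows essentially the same route as the paper's proof. Both reduce Skula-density to the condition that every Scott-open $U\ni x$ meets $P\cap\down_Dx$, using that $\down_Dx$ is Scott closed and lies inside every Scott-closed set containing $x$. Where the paper compresses the final step into a single equivalence with $x\in\cl_{\sigma(D)}(P\cap\down_Dx)$, you spell out the two directions separately.
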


\begin{proof}
A basic Skula neighbourhood of $x$ is $O\cap C$, where $O$ is Scott open and $C$ is Scott closed, with $x\in O\cap C$. Since $\down_Dx\subseteq C$, every such neighbourhood contains $O\cap\down_Dx$. Conversely, $O\cap\down_Dx$ is itself Skula open. Thus $P$ is Skula-dense exactly when every Scott-open neighbourhood $O$ of every $x$ meets $P\cap\down_Dx$, that is, when $x\in\cl_{\sigma(D)}(P\cap\down_Dx)$. This closure is contained in $\down_Dx$ and is lower, so the latter condition is equivalent to the displayed equality.
\end{proof}

\subsection{The two-element reflection}\label{subsec:reflection}

Equability was introduced in the study of observationally induced algebras
\cite{bks2014}. For a class $\mathcal E$ of morphisms
in $\DCPO$, let $\mathcal E^{\perp}$ denote the full subcategory of
all dcpos $C$ such that, for every $e:A\to B$ in $\mathcal E$,
precomposition induces an isomorphism
\[
  e^*:[B\to C]\longrightarrow[A\to C].
\]
Here $[A\to C]$ denotes the dcpo of Scott-continuous maps $A\to C$,
ordered pointwise; directed suprema are computed pointwise.
Let $\two=\{0<1\}$. For a Scott-continuous map $e:A\to B$,
precomposition gives
\[
  e^*:[B\to\two]\longrightarrow[A\to\two],\qquad f\longmapsto f\circ e.
\]
\begin{definition}\label{def:E2}
A morphism $e:A\to B$ belongs to $E_{\two}$ if $e^*$ is an isomorphism of
dcpos. Such a morphism is called \emph{$2$-equable}. Equivalently,
\(
  e^{-1}:\sigma(B)\longrightarrow\sigma(A)
\)
is an order isomorphism. The full subcategory
$\Rtwo=E_{\two}^{\perp}$ is the reflective hull of $\two$ in $\DCPO$ \cite[Theorem~2.7]{HeLyuChenKou2026}. We write
\(
  \lambda_D:D\longrightarrow L_2 D
\)
for the $\Rtwo$-reflection unit.
\end{definition}

Taking complements shows that $e\in E_{\two}$ if and only if inverse image
induces an order isomorphism between the Scott-closed-set lattices.

\begin{theorem}[{\cite[Theorems~2.5 and~3.5, Lemma~5.1,
Theorem~5.5, and Proposition~6.13]{HeLyuChenKou2026}}]
\label{thm:reflection-package}
For every dcpo $D$, the following hold.
\begin{enumerate}[label=\textup{(\roman*)},leftmargin=2.2em]
\item The reflection unit $\lambda_D$ belongs to $E_{\two}$ and induces order
      isomorphisms
      \[
        \sigma(L_2 D)\cong\sigma(D),\qquad
        \Gamma(L_2 D)\cong\Gamma(D).
      \]
\item If $D\in\Rtwo$ and $e:D\to E$ belongs to $E_{\two}$, then $e$ is an
      isomorphism in $\DCPO$.
\item The category $\Rtwo$ is $\Gamma$-faithful.
\item Every $C_{\sigma}$-unique dcpo belongs to $\Rtwo$.
\item Every sober dcpo belongs to $\Rtwo$
\end{enumerate}
\end{theorem}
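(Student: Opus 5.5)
The statement collects results from \cite{HeLyuChenKou2026}, so the plan is to rebuild the package in the order (i), (ii), (v), (iv), (iii). The central tool is an intrinsic construction of $L_2D$ from the frame $\sigma(D)$ alone. Two items are then formal: (iii) follows once the reflection is shown to depend only on $\sigma(D)$, and (iv) follows directly from (i).

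For (i), I would use the orthogonal-reflection machinery for the class $E_{\two}$. Concretely, I would embed $D$ into the power $\two^{\sigma(D)}$ via the characteristic map $x\mapsto(U\mapsto[x\in U])$. Then I would take $L_2D$ to be the least sub-dcpo of the sobrification-type object that contains the image of $D$ and is orthogonal to $E_{\two}$, obtained by a transfinite closure/intersection argument as in the standard construction of reflective hulls. The key check is that each stage of the closure leaves the lattice of Scott-open sets unchanged under inverse image, so that $\lambda_D\in E_{\two}$. This yields $\sigma(L_2D)\cong\sigma(D)$, and taking complements gives $\Gamma(L_2D)\cong\Gamma(D)$. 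I expect this to be the main obstacle: showing that the closure terminates inside $E_{\two}^{\perp}$ while preserving the open-set lattice. My first attempt would realize $L_2D$ as a sub-dcpo of the space of completely prime filters of $\sigma(D)$, which depends only on $\sigma(D)$.

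For (ii), let $D\in\Rtwo$ and $e\in E_{\two}$. Orthogonality applied to $\mathrm{id}_D$ gives $r:E\to D$ with $re=\mathrm{id}_D$. For every Scott-open $V\subseteq E$, we have $e^{-1}((er)^{-1}(V))=(ere)^{-1}(V)=e^{-1}(V)$. Since $e^{-1}$ is injective on $\sigma(E)$, it follows that $(er)^{-1}(V)=V$ for all $V$. Hence $er(y)$ and $y$ have the same open neighbourhoods, and since Scott spaces are $T_0$, $er=\mathrm{id}_E$.

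For (v), let $D$ be sober, $e:A\to B$ in $E_{\two}$, and $f:A\to D$ Scott continuous. The composite $(e^{-1})^{-1}\circ f^{-1}:\sigma(D)\to\sigma(B)$ is a frame homomorphism. By sobriety of $\Sigma D$ it is induced by a unique continuous map $g:\Sigma B\to\Sigma D$, which is Scott continuous. Then $g\circ e$ and $f$ induce the same frame map, so $ge=f$ by $T_0$. Uniqueness of $g$ also follows from $T_0$ together with the injectivity of $e^{-1}$. Thus $e^*:[B\to D]\to[A\to D]$ is bijective, and it is an order isomorphism because the correspondence of frame maps respects the pointwise order.

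For (iv), if $D$ is $C_{\sigma}$-unique, then (i) gives $\Gamma(L_2D)\cong\Gamma(D)$, hence $D\cong L_2D\in\Rtwo$.

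For (iii), take $D,Q\in\Rtwo$ with $\Gamma(D)\cong\Gamma(Q)$, so that $\sigma(D)\cong\sigma(Q)$. Since the construction of $L_2$ in (i) depends only on the frame of opens, this yields $L_2D\cong L_2Q$. By (ii), $\lambda_D$ and $\lambda_Q$ are isomorphisms, so $D\cong L_2D\cong L_2Q\cong Q$.
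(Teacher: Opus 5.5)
The paper does not prove this statement; it imports all five items from He, Lyu, Chen and Kou, so your proposal has to stand on its own. The genuine gap is in (iii).

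The sentence ``the construction of $L_2$ in (i) depends only on the frame of opens'' is asserted, not proved. For the construction you sketch it is not even true on its face. There, $L_2D$ is the least $\Rtwo$-sub-dcpo of a frame-determined ambient ($\two^{\sigma(D)}$, or the completely prime filters of $\sigma(D)$) \emph{containing the image of $D$}, and that image is not determined by the abstract frame.

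Transported along $\sigma(D)\cong\sigma(Q)$, the images of $D$ and $Q$ are in general different subsets of the same ambient. For $K\times\Ichain$ and $\Pprod$ in \cref{thm:product-counterexample}, one is a proper subset of the other. When $D,Q\in\Rtwo$, your construction returns exactly these images. So ``$L_2D\cong L_2Q$'' is just a restatement of (iii), and the argument is circular.

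Such hulls are not frame-invariant for free. Take the least subset of the sobrification that contains the space and is closed under directed suprema. Applied to $\Sigma\J$ it returns $\Sigma\J$, but applied to the sobrification of $\Sigma\J$ it returns the whole sobrification, and these two spaces have isomorphic frames.

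What is missing is an actual proof that two members of $\Rtwo$ with isomorphic frames are isomorphic. One route is to produce one dcpo that receives $2$-equable maps from both, after which (ii) finishes. The obstacle is that the Scott topology of a sub-dcpo generated by both images may be strictly finer than its subspace topology. That is the substantive content of the cited $\Gamma$-faithfulness theorem.

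Your items (ii), (iv) and (v) are correct. In (ii), the identity $e^{-1}\bigl((er)^{-1}V\bigr)=e^{-1}V$, together with injectivity of $e^{-1}$ and the $T_0$ property, gives $er=\mathrm{id}_E$. Item (v) is a correct use of the spaces--frames adjunction for a sober codomain, and (iv) follows from (i).

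In (i), however, you misplace the difficulty. Once the reflection exists, $\lambda_D\in E_{\two}$ is formal. By the very definition of $E_{\two}$ we have $\two\in\Rtwo$, so the universal property makes $\lambda_D^*:[L_2D\to\two]\to[D\to\two]$ bijective. A bijective union-preserving map $\lambda_D^{-1}:\sigma(L_2D)\to\sigma(D)$ is an order isomorphism.

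The real work is existence. The paper takes this from the cited source in \cref{def:E2}, and your sketch leaves it open, since you never show that $D$ maps universally into your hull. No transfinite analysis is needed:
\begin{enumerate}[label=\textup{(\alph*)},leftmargin=2.2em]
\item $\Rtwo$ is closed under limits in $\DCPO$, because each $[B\to-]$ preserves them.
\item Hence the least $\Rtwo$-sub-dcpo $L\subseteq\two^{\sigma(D)}$ containing the characteristic image of $D$ exists.
\item For $f:D\to C$ with $C\in\Rtwo$, take the preimage in $L$ of the characteristic image of $C$ under $z\mapsto z\circ f^{-1}$. It is a pullback of members of $\Rtwo$ and contains the image of $D$, so it equals $L$. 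This gives the extension.
\item An equalizer gives its uniqueness.
\end{enumerate}
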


\begin{lemma}\label{lem:E2-image}
The class $E_{\two}$ contains all isomorphisms and is closed under composition.
If $e:A\to B$ belongs to $E_{\two}$, then $e$ is an order embedding, its image
$e[A]$ is a sub-dcpo of $B$, and the inclusion $j:e[A]\hookrightarrow B$
belongs to $E_{\two}$.
\end{lemma}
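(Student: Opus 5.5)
The first assertion is immediate from the characterization of $E_{\two}$ by inverse images. If $e$ is an isomorphism, then $e^{-1}:\sigma(B)\to\sigma(A)$ is an order isomorphism. If $e:A\to B$ and $f:B\to C$ lie in $E_{\two}$, then $(f\circ e)^{-1}=e^{-1}\circ f^{-1}$ is a composite of order isomorphisms. For the rest, fix $e\in E_{\two}$ and work with the lattice isomorphism $e^{-1}:\Gamma(B)\to\Gamma(A)$ obtained by taking complements.

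\emph{Order embedding.} Since $e$ is Scott continuous, it is monotone. For the converse, suppose $e(a)\le e(a')$. The set $\down_B e(a')$ is Scott closed in $B$, and its preimage $e^{-1}(\down_B e(a'))$ is Scott closed in $A$ and contains $a'$. The key step is to show this preimage is exactly $\down_A a'$. Let $C=\down_A a'$. Surjectivity of $e^{-1}$ on $\Gamma$ gives some $G\in\Gamma(B)$ with $e^{-1}(G)=C$. Since $a'\in C$, we have $e(a')\in G$, so $\down_B e(a')\subseteq G$. Hence
\[
e^{-1}(\down_B e(a'))\subseteq e^{-1}(G)=\down_A a'.
\]
The reverse inclusion holds by monotonicity. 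Now $a$ lies in $e^{-1}(\down_B e(a'))=\down_A a'$, so $a\le a'$. Thus $e$ is an order embedding, and in particular injective.

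\emph{Image is a sub-dcpo.} This is the step I expect to be the main obstacle, because directedness in $B$ has to be pulled back to $A$. Let $D\subseteq e[A]$ be directed in $B$. Since $e$ is an order embedding, $e^{-1}(D)$ is directed in $A$. Put $a=\bigvee e^{-1}(D)$. By Scott continuity,
\[
e(a)=\bigvee e[e^{-1}(D)]=\bigvee D,
\]
so $\bigvee D\in e[A]$.

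\emph{Inclusion lies in $E_{\two}$.} Write $e=j\circ e'$, where $e':A\to e[A]$ is the corestriction. The map $e'$ is a bijective order embedding, hence an order isomorphism of posets, and therefore an isomorphism of dcpos, with $e[A]$ carrying the order inherited from $B$. Since $e[A]$ is a sub-dcpo, directed suprema in $e[A]$ agree with those in $B$, so $j$ is Scott continuous. Now $(e')^{-1}$ is an order isomorphism on Scott topologies, and
\[
e^{-1}=(e')^{-1}\circ j^{-1}.
\]
It follows that $j^{-1}=\bigl((e')^{-1}\bigr)^{-1}\circ e^{-1}$ is an order isomorphism $\sigma(B)\to\sigma(e[A])$. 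Hence $j\in E_{\two}$.
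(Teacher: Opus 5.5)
Your proof is correct. Its outer structure is the same as the paper's: you get closure under composition from $(g\circ e)^{-1}=e^{-1}\circ g^{-1}$, and you get the final claim from the factorization $e=j\circ e'$ through the corestriction.

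The difference is in the middle step. The paper does not prove that $e$ is an order embedding with sub-dcpo image. It cites Lemma~3.4 of He, Lyu, Chen and Kou for the fact that surjectivity of $e^{-1}:\sigma(B)\to\sigma(A)$ implies both. You give a direct argument instead:
\begin{itemize}
\item Writing $\down_A a'=e^{-1}(G)$ for some $G\in\Gamma(B)$ yields $e^{-1}(\down_B e(a'))\subseteq\down_A a'$. This inclusion is all that order reflection needs; the reverse inclusion you mention is never used.
\item The sub-dcpo property then holds for any Scott-continuous order embedding, by pulling a directed $D\subseteq e[A]$ back to the directed set $e^{-1}(D)$.
\end{itemize}

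Your route makes the lemma self-contained and shows that only surjectivity of $e^{-1}$ on Scott-closed sets is needed. The step you flagged as the main obstacle is routine once order reflection is in hand.
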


\begin{proof}
The first two assertions follow from
$(g\circ e)^{-1}=e^{-1}\circ g^{-1}$ on Scott-open sets. By
\cite[Lemma~3.4]{HeLyuChenKou2026}, surjectivity of
$e^{-1}:\sigma(B)\to\sigma(A)$ implies that $e$ is an order embedding and
that $e[A]$ is a sub-dcpo of $B$. Let $\bar e:A\to e[A]$ be the
corestriction. It is a dcpo isomorphism and $e=j\circ\bar e$. On open sets,
$e^{-1}=\bar e^{-1}\circ j^{-1}$. Both $e^{-1}$ and $\bar e^{-1}$ are
order isomorphisms, hence so is $j^{-1}$.
\end{proof}

For a dcpo $D$, let
\(
  \widehat D=\Irr(\Gamma(D))
\)
be the dcpo of nonempty irreducible Scott-closed subsets of $D$, ordered by
inclusion. We identify $D$ with its principal image under
\[
  \eta_D:D\longrightarrow\widehat D,\qquad x\longmapsto\down_Dx.
\]
The dcpo $\widehat D$ is the order sobrification of $D$.

\section{An intrinsic characterization of \texorpdfstring{$C_{\sigma}$}{C-sigma}-unique dcpos}\label{sec:characterization}

In this section, we derive an intrinsic characterization of \(C_{\sigma}\)-uniqueness.
We first identify the sub-dcpos that preserve Scott-closed-set lattices and give two sufficient conditions for constructing them. The two-element reflection then converts this criterion into a characterization of $C_{\sigma}$-uniqueness.

\subsection{Skula-density and closure compatibility}\label{subsec:trace-cores}

\begin{definition}\label{def:E2-subdcpo}
A sub-dcpo $P\subseteq D$ is \emph{$2$-equable} if its inclusion belongs to $E_{\two}$. Equivalently, restriction is an order isomorphism
\[
  \rho:\Gamma(D)\longrightarrow\Gamma(P),\qquad C\longmapsto C\cap P.
\]
\end{definition}

\begin{theorem}\label{thm:closure-characterization}
Let $P\subseteq D$ be a sub-dcpo. Then $P$ is $2$-equable if and only if
\begin{align}
  \cl_{\sigma(D)}(P\cap\down_Dx)&=\down_Dx
     &&\text{for every }x\in D, \tag{SD}\label{eq:SD}\\
  \cl_{\sigma(D)}(A)\cap P&=\cl_{\sigma(P)}(A)
     &&\text{for every }A\subseteq P. \tag{SC}\label{eq:SC}
\end{align}
When these conditions hold, restriction has inverse $C\mapsto\cl_{\sigma(D)}(C)$.
\end{theorem}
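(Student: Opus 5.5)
We work throughout with the restriction map $\rho:\Gamma(D)\to\Gamma(P)$, $C\mapsto C\cap P$. It is well defined because $P$ is a sub-dcpo. A subset $C\cap P$ is lower in $P$. If $E\subseteq C\cap P$ is directed, its supremum in $P$ equals its supremum in $D$, and this supremum lies in $C$. The map $\rho$ is clearly monotone. The goal is to show that $\rho$ is an order isomorphism exactly when \eqref{eq:SD} and \eqref{eq:SC} hold. We also record the map $\kappa:\Gamma(P)\to\Gamma(D)$, $B\mapsto\cl_{\sigma(D)}(B)$, which is monotone.

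\emph{Sufficiency.} Assume \eqref{eq:SD} and \eqref{eq:SC}. For $B\in\Gamma(P)$, \eqref{eq:SC} with $A=B$ gives $\rho\kappa(B)=\cl_{\sigma(D)}(B)\cap P=\cl_{\sigma(P)}(B)=B$. For $C\in\Gamma(D)$, we clearly have $\kappa\rho(C)=\cl_{\sigma(D)}(C\cap P)\subseteq C$. Conversely, let $x\in C$. Then $P\cap\down_Dx\subseteq C\cap P$, so by \eqref{eq:SD} we get $\down_Dx=\cl_{\sigma(D)}(P\cap\down_Dx)\subseteq\kappa\rho(C)$. Hence $x\in\kappa\rho(C)$. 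Thus $\rho$ and $\kappa$ are mutually inverse monotone maps, so $\rho$ is an order isomorphism with the stated inverse.

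\emph{Necessity.} Assume $\rho$ is an order isomorphism. We first identify its inverse. For $B\in\Gamma(P)$, let $C=\rho^{-1}(B)$, so $C\cap P=B$ and hence $\cl_{\sigma(D)}(B)\subseteq C$. Applying $\rho$ gives $B\subseteq\cl_{\sigma(D)}(B)\cap P\subseteq C\cap P=B$, so $\rho(\cl_{\sigma(D)}(B))=B=\rho(C)$. Injectivity of $\rho$ then yields $C=\cl_{\sigma(D)}(B)$, so $\rho^{-1}=\kappa$.

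For \eqref{eq:SC}, take an arbitrary $A\subseteq P$ and set $B=\cl_{\sigma(P)}(A)$. Since $\cl_{\sigma(D)}(A)\cap P$ is Scott closed in $P$ and contains $A$, we have $B\subseteq\cl_{\sigma(D)}(A)\cap P$. In the other direction, $\kappa(B)=\cl_{\sigma(D)}(B)$ is a Scott-closed set of $D$ containing $A$, so $\cl_{\sigma(D)}(A)\subseteq\kappa(B)$. Intersecting with $P$ gives $\cl_{\sigma(D)}(A)\cap P\subseteq\rho\kappa(B)=B$.

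For \eqref{eq:SD}, take $x\in D$ and $C=\down_Dx$. Then $\rho(C)=P\cap\down_Dx$, and $\kappa\rho(C)=C$ gives $\cl_{\sigma(D)}(P\cap\down_Dx)=\down_Dx$.

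The only step requiring care is identifying $\rho^{-1}$ with $\kappa$ in the necessity direction, since a priori an abstract isomorphism need not be given by closure. The injectivity argument above settles this. The equivalence of the two formulations in \cref{def:E2-subdcpo} (complements of open sets) was already noted after \cref{def:E2}, so no further work is needed there.
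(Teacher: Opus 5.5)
Your proof is correct and follows essentially the same route as the paper: injectivity of $\rho$ gives (SD), a preimage under $\rho$ of $\cl_{\sigma(P)}(A)$ gives (SC), and conversely (SD) and (SC) make $\rho$ and $C\mapsto\cl_{\sigma(D)}(C)$ mutually inverse. The only difference is cosmetic: you first identify $\rho^{-1}$ with the closure map, whereas the paper works directly with an arbitrary preimage $G$ of $\cl_{\sigma(P)}(A)$.
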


\begin{proof}
Write $\rho(C)=C\cap P$ and first suppose that $\rho$ is an order isomorphism. For $x\in D$, set $H=\cl_{\sigma(D)}(P\cap\down_Dx)$. Then $H\subseteq\down_Dx$ and $H\cap P=(\down_Dx)\cap P$. Injectivity gives $H=\down_Dx$, proving \eqref{eq:SD}.

For $A\subseteq P$, continuity of the inclusion gives
\[
  \cl_{\sigma(P)}(A)\subseteq\cl_{\sigma(D)}(A)\cap P.
\]
Choose $G\in\Gamma(D)$ with $G\cap P=\cl_{\sigma(P)}(A)$. Since $A\subseteq G$, one has $\cl_{\sigma(D)}(A)\subseteq G$, which proves the reverse inclusion and hence \eqref{eq:SC}.

Conversely, assume \eqref{eq:SD} and \eqref{eq:SC}. If $G,H\in\Gamma(D)$ have the same trace on $P$ and $x\in G$, then $P\cap\down_Dx\subseteq H$. By \eqref{eq:SD}, $x\in H$. Interchanging $G$ and $H$ proves injectivity of $\rho$. If $C\in\Gamma(P)$, then \eqref{eq:SC} gives $\cl_{\sigma(D)}(C)\cap P=C$. Thus $\rho$ is bijective with the stated monotone inverse, and hence is an order isomorphism.
\end{proof}

Condition \eqref{eq:SD} is Skula-density by \cref{lem:Skula-density-equivalence}. Condition \eqref{eq:SC} says precisely that $\Sigma P$ carries the subspace topology inherited from $\Sigma D$, since a topology is determined by its closure operator. Thus a sub-dcpo is $2$-equable exactly when its Scott space is a Skula-dense topological subspace of the ambient Scott space.

\subsection{Sufficient conditions for the closure identities}\label{subsec:closure-sufficient}

The following two conditions will be used in the counterexamples.

\begin{definition}\label{def:directed-lifting-bridge-density}
Let $P\subseteq D$ be a sub-dcpo. It has \emph{directed lifting} if, whenever $E\subseteq D$ is directed and $p\in P$ satisfies $p\leq\bigvee E$, there is a directed set $B\subseteq P$ with $\bigvee B=p$ such that each $b\in B$ lies below some member of $E$.
It has \emph{bridge density} if, for each $x\in D$, there is a directed set $V_x\subseteq D$ with $\bigvee V_x=x$ such that every $v\in V_x$ admits $p_v\in P$ with $v\leq p_v\leq x$.
\end{definition}

\begin{proposition}\label{prop:directed-lifting-bridge-density}
Bridge density implies \eqref{eq:SD}, and directed lifting implies \eqref{eq:SC}. Hence the two conditions together make $P\hookrightarrow D$ $2$-equable.
\end{proposition}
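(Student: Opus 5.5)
The plan is to verify the two implications separately and then combine them with \cref{thm:closure-characterization}.

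\emph{Bridge density implies \eqref{eq:SD}.} Fix $x\in D$ and put $H=\cl_{\sigma(D)}(P\cap\down_Dx)$. The inclusion $H\subseteq\down_Dx$ is immediate, because $\down_Dx$ is Scott closed and contains $P\cap\down_Dx$. For the reverse inclusion, take the directed set $V_x$ supplied by bridge density. For each $v\in V_x$ there is $p_v\in P$ with $v\le p_v\le x$. Then $p_v\in P\cap\down_Dx\subseteq H$, and since $H$ is a lower set, $v\in H$. Thus $V_x\subseteq H$. Because $H$ is closed under directed suprema, $x=\bigvee V_x\in H$, and hence $\down_Dx\subseteq H$.

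\emph{Directed lifting implies \eqref{eq:SC}.} Fix $A\subseteq P$. Continuity of the inclusion gives $\cl_{\sigma(P)}(A)\subseteq\cl_{\sigma(D)}(A)\cap P$, as in the proof of \cref{thm:closure-characterization}. For the reverse inclusion, set $C=\cl_{\sigma(P)}(A)$. Let $G$ be the set of $y\in D$ such that every $p\in P$ with $p\le y$ lies in $C$; equivalently, $G=\{y\in D:P\cap\down_Dy\subseteq C\}$. I will show that $G$ is Scott closed in $D$ and contains $A$.
\begin{itemize}[leftmargin=2.2em]
\item $A\subseteq G$: if $a\in A$ and $p\in P$ with $p\le a$, then $p\in C$, since $C$ is a lower set of $P$ containing $a$.
\item $G$ is a lower set: this is clear from the definition.
\item $G$ is closed under directed suprema: this is where lifting is used, and it is the only nontrivial step. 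Let $E\subseteq G$ be directed and let $p\in P$ with $p\le\bigvee E$. Directed lifting gives a directed $B\subseteq P$ with $\bigvee B=p$ such that each $b\in B$ lies below some $e\in E$. Since $e\in G$ and $b\in P\cap\down_De$, we get $b\in C$. Now $B$ is directed in $P$, and its supremum taken in $D$ lies in $P$, so it is also its supremum in $P$ (this uses that $P$ is a sub-dcpo). As $C$ is Scott closed in $P$, $p=\bigvee B\in C$. Hence $\bigvee E\in G$.
\end{itemize}
Therefore $\cl_{\sigma(D)}(A)\subseteq G$. Finally, for $q\in G\cap P$ we have $q\le q$ with $q\in P$, so $q\in C$. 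This gives $\cl_{\sigma(D)}(A)\cap P\subseteq G\cap P\subseteq C$, which is \eqref{eq:SC}.

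With both conditions established, \cref{thm:closure-characterization} shows that $P\hookrightarrow D$ is $2$-equable. The only delicate point in the whole argument is choosing the auxiliary set $G$ so that lifting applies to an arbitrary $p\le\bigvee E$, rather than only to elements of $E$.
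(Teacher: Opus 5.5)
Your proposal is correct and follows the paper's proof essentially step by step. Both prove \eqref{eq:SD} by pushing $V_x$ into $\cl_{\sigma(D)}(P\cap\down_Dx)$, and both prove \eqref{eq:SC} via the auxiliary Scott-closed set $\{y\in D:P\cap\down_Dy\subseteq C\}$ with $C=\cl_{\sigma(P)}(A)$, using directed lifting exactly once to show this set is closed under directed suprema.
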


\begin{proof}
Under bridge density, every $v\in V_x$ lies below a point of $P\cap\down_Dx$. Consequently, its Scott closure contains $V_x$, then $x=\bigvee V_x$, and hence all of $\down_Dx$. This proves \eqref{eq:SD}.

Assume directed lifting. For $C\in\Gamma(P)$ put
\[
  H_C=\{x\in D:P\cap\down_Dx\subseteq C\}.
\]
This is a lower set. Let $E\subseteq H_C$ be directed and let $p\in P$ satisfy $p\leq\bigvee E$. Choose a lifting set $B\subseteq P$ with supremum $p$. For every $b\in B$, some $e\in E$ satisfies $b\leq e$; since $e\in H_C$, one has $b\in C$. Thus $B\subseteq C$, and Scott closedness of $C$ gives $p\in C$. Therefore $\bigvee E\in H_C$, so $H_C\in\Gamma(D)$.

Since $C$ is lower in $P$, one has $H_C\cap P=C$. For $A\subseteq P$, take $C=\cl_{\sigma(P)}(A)$. The inclusion $A\subseteq H_C$ gives
\[
  \cl_{\sigma(D)}(A)\cap P\subseteq H_C\cap P=C.
\]
The reverse inclusion follows from continuity of $P\hookrightarrow D$. This proves \eqref{eq:SC}.
\end{proof}

\begin{remark}
% Directed lifting is sufficient but not necessary for \eqref{eq:SC}. Let
% \[
%   D=\{\bot,p,\top\}\cup\{e_n:n\in\N\},
% \]
% ordered by $\bot<e_0<e_1<\cdots<\top$ and $\bot<p<\top$, with $p$ incomparable with all $e_n$. For $P=D$, both closure identities hold. However, $E=\{e_n:n\in\N\}$ has supremum $\top$, and the only point below both $p$ and some $e_n$ is $\bot$. No lifting set for $p$ can therefore have supremum $p$.
The two implications above are only sufficient conditions and are not reversible in general. For instance, the initial sub-dcpo \(P^\times_0\subseteq K\times\Ichain\) constructed in Section~\ref{sec:product-counterexample} satisfies \textup{(SD)} but not bridge density. On the other hand, if
\(
D=\{\bot,p,\top\}\cup\{e_n:n\in\mathbb N\},
\)
with \(\bot<e_0<e_1<\cdots<\top\) and \(\bot<p<\top\), where \(p\) is incomparable with every \(e_n\), then taking \(P=D\) gives \textup{(SC)} while directed lifting fails.
\end{remark}

\subsection{Reflection and intrinsic rigidity}\label{subsec:canonical-envelope}

The reflection facts from \cref{thm:reflection-package} give the following reduction.

\begin{theorem}\label{thm:canonical-envelope}
For dcpos $D$ and $Q$, one has $\Gamma(Q)\cong\Gamma(D)$ if and only if $Q$ is isomorphic to a $2$-equable sub-dcpo of $L_2D$.
\end{theorem}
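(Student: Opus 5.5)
We prove the two directions separately, using \cref{thm:reflection-package} together with \cref{lem:E2-image}.

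For the reverse implication, suppose $\phi:Q\to P$ is an isomorphism of dcpos, where $P\subseteq L_2D$ is a $2$-equable sub-dcpo. Then $\Gamma(Q)\cong\Gamma(P)$, since $\phi$ is an order isomorphism and therefore carries Scott-closed sets to Scott-closed sets. By \cref{def:E2-subdcpo}, restriction gives $\Gamma(L_2D)\cong\Gamma(P)$. By \cref{thm:reflection-package}(i), $\Gamma(L_2D)\cong\Gamma(D)$. Composing these three isomorphisms gives $\Gamma(Q)\cong\Gamma(D)$.

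For the forward implication, assume $\Gamma(Q)\cong\Gamma(D)$. By \cref{thm:reflection-package}(i), also $\Gamma(L_2Q)\cong\Gamma(Q)$, so $\Gamma(L_2Q)\cong\Gamma(D)\cong\Gamma(L_2D)$. Both $L_2Q$ and $L_2D$ lie in $\Rtwo$, and $\Rtwo$ is $\Gamma$-faithful by \cref{thm:reflection-package}(iii). Hence there is an isomorphism $\psi:L_2Q\to L_2D$. Consider the composite
\[
  e=\psi\circ\lambda_Q:Q\to L_2D .
\]
The unit $\lambda_Q$ lies in $E_{\two}$ by \cref{thm:reflection-package}(i). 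The map $\psi$ is an isomorphism, so it also lies in $E_{\two}$. By \cref{lem:E2-image}, $E_{\two}$ is closed under composition, so $e\in E_{\two}$. Applying \cref{lem:E2-image} to $e$ gives three facts:
\begin{itemize}
\item[(a)] $e$ is an order embedding;
\item[(b)] its image $P=e[Q]$ is a sub-dcpo of $L_2D$;
\item[(c)] the inclusion $P\hookrightarrow L_2D$ lies in $E_{\two}$, so $P$ is $2$-equable.
\end{itemize}
The corestriction $Q\to P$ is a bijective order embedding onto a sub-dcpo, so it is an isomorphism of dcpos. Thus $Q$ is isomorphic to the $2$-equable sub-dcpo $P$ of $L_2D$.

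The step requiring the most care is obtaining the isomorphism $L_2Q\cong L_2D$. An abstract lattice isomorphism $\Gamma(Q)\cong\Gamma(D)$ is not induced by any map between the dcpos, so it cannot be used directly. The $\Gamma$-faithfulness of $\Rtwo$ resolves this: it converts the abstract lattice isomorphism into a genuine dcpo isomorphism $\psi$. Composing $\psi$ with the unit $\lambda_Q$ then produces the concrete $2$-equable map $e$ to which \cref{lem:E2-image} applies. The remaining verifications are routine.
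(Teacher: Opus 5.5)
Your proof is correct and follows essentially the same route as the paper. $\Gamma$-faithfulness of $\Rtwo$ turns the abstract lattice isomorphism into an isomorphism $L_2Q\cong L_2D$. Its composite with $\lambda_Q$ then lies in $E_{\two}$, and \cref{lem:E2-image} gives the $2$-equable image. The converse is the same chain of isomorphisms $\Gamma(Q)\cong\Gamma(P)\cong\Gamma(L_2D)\cong\Gamma(D)$.
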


\begin{proof}
If $\Gamma(Q)\cong\Gamma(D)$, then
\[
  \Gamma(L_2Q)\cong\Gamma(Q)\cong\Gamma(D)\cong\Gamma(L_2D).
\]
By $\Gamma$-faithfulness of $\Rtwo$, there is an isomorphism $h:L_2Q\to L_2D$. The composite $h\circ\lambda_Q$ belongs to $E_{\two}$. Its image is the required sub-dcpo by \cref{lem:E2-image}.

Conversely, if $Q$ is isomorphic to a $2$-equable sub-dcpo $P\subseteq L_2D$, then $\Gamma(Q)\cong\Gamma(P)\cong\Gamma(L_2D)\cong\Gamma(D)$.
\end{proof}

\begin{theorem}\label{thm:intrinsic-rigidity}
For a dcpo $D$, the following are equivalent.
\begin{enumerate}[label=\textup{(\roman*)},leftmargin=2.2em]
\item $D$ is $C_{\sigma}$-unique.
\item The dcpo $L_2D$ has no proper $2$-equable sub-dcpo.
\item $D\in\Rtwo$, and every sub-dcpo $P\subseteq D$ satisfying \eqref{eq:SD} and \eqref{eq:SC} equals $D$.
\end{enumerate}
Equivalently, the sub-dcpos in \textup{(ii)} may be specified by \eqref{eq:SD} and \eqref{eq:SC} in the ambient dcpo $L_2D$.
\end{theorem}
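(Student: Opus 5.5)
I will prove the cycle (i)$\Rightarrow$(ii)$\Rightarrow$(iii)$\Rightarrow$(i). The final sentence of the statement then follows from \cref{thm:closure-characterization} applied with ambient dcpo $L_2D$: a sub-dcpo of $L_2D$ is $2$-equable exactly when it satisfies \eqref{eq:SD} and \eqref{eq:SC} there.

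For (i)$\Rightarrow$(ii), let $P\subseteq L_2D$ be a $2$-equable sub-dcpo. Then $\Gamma(P)\cong\Gamma(L_2D)\cong\Gamma(D)$ by \cref{thm:reflection-package}(i). So $C_{\sigma}$-uniqueness gives $P\cong D$. By \cref{thm:reflection-package}(iv), $D\in\Rtwo$, hence $P\in\Rtwo$. Now consider the inclusion $j:P\hookrightarrow L_2D$. It lies in $E_{\two}$, and its domain is in $\Rtwo$, so \cref{thm:reflection-package}(ii) makes $j$ an isomorphism. A surjective inclusion forces $P=L_2D$.

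For (ii)$\Rightarrow$(iii), I first show $D\in\Rtwo$. By \cref{lem:E2-image}, the image $\lambda_D[D]$ is a sub-dcpo of $L_2D$ whose inclusion lies in $E_{\two}$. By (ii) this image is all of $L_2D$. Since $\lambda_D$ is an order embedding, it is then a surjective order embedding, i.e.\ a dcpo isomorphism $D\cong L_2D$. As $L_2D\in\Rtwo$ and $\Rtwo$ is a full replete subcategory (a reflective hull is closed under isomorphism), we get $D\in\Rtwo$.

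Next let $P\subseteq D$ satisfy \eqref{eq:SD} and \eqref{eq:SC}. By \cref{thm:closure-characterization}, $P$ is $2$-equable. Composing its inclusion with the isomorphism $\lambda_D$ gives a map in $E_{\two}$ (\cref{lem:E2-image}) whose image $\lambda_D[P]$ is a $2$-equable sub-dcpo of $L_2D$. By (ii), $\lambda_D[P]=L_2D$. Since $\lambda_D$ is a bijection, $P=D$.

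For (iii)$\Rightarrow$(i), suppose $\Gamma(Q)\cong\Gamma(D)$. By \cref{thm:canonical-envelope}, $Q$ is isomorphic to a $2$-equable sub-dcpo $P$ of $L_2D$. Because $D\in\Rtwo$, \cref{thm:reflection-package}(ii) applied to $\lambda_D\in E_{\two}$ shows $\lambda_D:D\to L_2D$ is an isomorphism. Transporting along it, $\lambda_D^{-1}[P]$ is a $2$-equable sub-dcpo of $D$, because isomorphisms preserve the restriction isomorphism of Scott-closed lattices. By \cref{thm:closure-characterization} it satisfies \eqref{eq:SD} and \eqref{eq:SC}, so (iii) gives $\lambda_D^{-1}[P]=D$. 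Hence $P=L_2D$ and $Q\cong L_2D\cong D$.

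The only delicate point I expect is the repleteness of $\Rtwo$ used in (ii)$\Rightarrow$(iii). It is immediate from the definition $\Rtwo=E_{\two}^{\perp}$, since the condition that $e^*:[B\to C]\to[A\to C]$ be an isomorphism is invariant under replacing $C$ by an isomorphic dcpo. The rest is bookkeeping with \cref{lem:E2-image}.
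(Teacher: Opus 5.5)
Your proposal is correct and follows the paper's proof essentially step for step. You use the same cycle (i)$\Rightarrow$(ii)$\Rightarrow$(iii)$\Rightarrow$(i), invoking \cref{thm:reflection-package}(ii),(iv), \cref{lem:E2-image}, \cref{thm:canonical-envelope} and \cref{thm:closure-characterization} at the same points. The only difference is that you spell out the transport along the isomorphism $\lambda_D:D\to L_2D$ and the repleteness of $\Rtwo$, which the paper leaves implicit in the phrase ``identifying $D$ with $L_2D$''.
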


\begin{proof}
Suppose that $D$ is $C_{\sigma}$-unique. Then $D\in\Rtwo$ by \cref{thm:reflection-package}(iv). If $P\subseteq L_2D$ is $2$-equable, then $\Gamma(P)\cong\Gamma(D)$, so $P\cong D$ and $P\in\Rtwo$. By \cref{thm:reflection-package}(ii), the inclusion $P\hookrightarrow L_2D$ is itself an isomorphism, hence is surjective. Thus $P=L_2D$.

Assume \textup{(ii)}. By \cref{lem:E2-image}, the image of $\lambda_D$ is a $2$-equable sub-dcpo of $L_2D$. It must be all of $L_2D$, so $\lambda_D$ is an isomorphism and $D\in\Rtwo$. Identifying $D$ with $L_2D$ and applying \cref{thm:closure-characterization} proves \textup{(iii)}.

Finally, suppose \textup{(iii)} and $\Gamma(Q)\cong\Gamma(D)$. By \cref{thm:canonical-envelope}, $Q$ is isomorphic to a $2$-equable sub-dcpo $P\subseteq D$. The closure criterion and \textup{(iii)} give $P=D$, and hence $Q\cong D$.
\end{proof}

For later use, we record the corresponding relation with sobrification. Here the topological sobrification has underlying set $\widehat P$ and hull--kernel closed sets
\[
  \widehat C=\{H\in\widehat P:H\subseteq C\},\qquad C\in\Gamma(P),
\]
as in \cite[Chapter~8]{goubault2013}; its topology is not assumed to be the Scott topology of $\widehat P$.

\begin{corollary}\label{cor:sober-equable-completions}
Let $D$ be sober and let $i:P\hookrightarrow D$ be a $2$-equable sub-dcpo. Then $i$ is both the sobrification map of $\Sigma P$ and the $\Rtwo$-reflection of $P$. More precisely,
\[
  \Psi:D\longrightarrow\widehat P,\qquad
  \Psi(x)=P\cap\down_Dx,
\]
is an order isomorphism and a homeomorphism from $\Sigma D$ to the topological sobrification of $\Sigma P$. Consequently,
\[
  D\cong L_2P\cong\widehat P.
\]
If $P\subsetneq D$, then $P\notin\Rtwo$ and $\Sigma P$ is not sober.
\end{corollary}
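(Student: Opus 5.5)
We first show that $\Psi$ is well defined, i.e.\ that $\Psi(x)=P\cap\down_Dx$ lies in $\widehat P=\Irr(\Gamma(P))$. Since $\down_Dx$ is Scott closed, its trace $\Psi(x)=\rho(\down_Dx)$ belongs to $\Gamma(P)$, where $\rho$ is the restriction isomorphism of \cref{def:E2-subdcpo}. It is nonempty, because by \eqref{eq:SD} its $\sigma(D)$-closure equals $\down_Dx\neq\varnothing$. For irreducibility, note that $\rho$ is a lattice isomorphism $\Gamma(D)\cong\Gamma(P)$, so it preserves finite joins and the bottom element. Hence it maps the nonzero finite join-prime elements of $\Gamma(D)$ bijectively onto those of $\Gamma(P)$, i.e.\ it restricts to a bijection $\Irr(\Gamma(D))\to\Irr(\Gamma(P))$. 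Because $D$ is sober, $\Irr(\Gamma(D))=\{\down_Dx:x\in D\}$. Therefore $\Psi=\rho\circ\eta_D$ is a bijection $D\to\widehat P$, and it is an order isomorphism because both $\rho$ and $\eta_D$ are order embeddings onto their images.

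Next we check that $\Psi$ is a homeomorphism onto the topological sobrification. For $C\in\Gamma(P)$ write $G=\cl_{\sigma(D)}(C)$, which is $\rho^{-1}(C)$ by \cref{thm:closure-characterization}. Then
\[
  \Psi^{-1}(\widehat C)=\{x:P\cap\down_Dx\subseteq C\}=\{x:\rho(\down_Dx)\le\rho(G)\}=\{x:\down_Dx\subseteq G\}=G.
\]
Since $C\mapsto G$ is a bijection $\Gamma(P)\to\Gamma(D)$, the map $\Psi$ carries the closed sets of $\Sigma D$ exactly onto the hull--kernel closed sets, so it is a homeomorphism. Moreover $\Psi\circ i=\eta_P$, because for $p\in P$ we have $P\cap\down_Dp=\down_Pp$. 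It follows that $i$ is, up to this homeomorphism, the sobrification map of $\Sigma P$.

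For the $\Rtwo$-reflection, observe that $D$ is sober, so $D\in\Rtwo$ by \cref{thm:reflection-package}(v), and $i\in E_{\two}$ by hypothesis. Let $u:P\to R$ be a Scott-continuous map with $R\in\Rtwo$. Since $R\in E_{\two}^{\perp}$, the precomposition map $i^*:[D\to R]\to[P\to R]$ is an isomorphism, so $u$ factors uniquely through $i$. Thus $i$ has the universal property of $\lambda_P$, and $D\cong L_2P$. Combined with the previous step this gives $D\cong L_2P\cong\widehat P$.

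Finally, suppose $P\subsetneq D$. If $P$ were in $\Rtwo$, then \cref{thm:reflection-package}(ii) applied to $i\in E_{\two}$ would make $i$ an isomorphism, so $P=D$, a contradiction. If $\Sigma P$ were sober, then \cref{thm:reflection-package}(v) would again give $P\in\Rtwo$, which we have just excluded. The main care needed in the whole argument is in the first step: we use that $\rho$ preserves join-primes, which relies on $\rho$ being a lattice isomorphism rather than merely an order isomorphism. This holds because an order isomorphism between complete lattices preserves all joins.
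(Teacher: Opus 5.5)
Your proof is correct and follows the paper's argument essentially step for step. Restriction maps $\Irr(\Gamma(D))$ onto $\Irr(\Gamma(P))$, and sobriety of $D$ then makes $\Psi$ an order isomorphism. The computation $\Psi^{-1}(\widehat C)=\rho^{-1}(C)$ gives the homeomorphism, and $D\in\Rtwo$ together with $i\in E_{\two}$ yields both the universal property and the final claims that $P\notin\Rtwo$ and $\Sigma P$ is not sober. Your extra justifications only spell out what the paper states in one line: nonemptiness via \eqref{eq:SD}, and the fact that an order isomorphism of complete lattices preserves finite joins and hence join-primes.
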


\begin{proof}
Restriction $\rho:\Gamma(D)\to\Gamma(P)$ preserves irreducible elements. Since $D$ is sober, all members of $\Irr(\Gamma(D))$ are principal ideals. Hence $\Psi$ is an order isomorphism onto $\widehat P$.

For $C\in\Gamma(P)$, put $G=\rho^{-1}(C)$. Then
\[
  \Psi(x)\subseteq C
  \quad\Longleftrightarrow\quad
  \rho(\down_Dx)\subseteq\rho(G)
  \quad\Longleftrightarrow\quad x\in G.
\]
Thus $\Psi^{-1}(\widehat C)=G$, and $G$ ranges over all of $\Gamma(D)$. This proves that $\Psi$ is a homeomorphism. For $p\in P$, one has $\Psi(i(p))=\down_Pp$, so $i$ corresponds to the canonical sobrification map.

Moreover, $D\in\Rtwo=E_{\two}^{\perp}$. Since $i\in E_{\two}$, precomposition induces $[D\to R]\cong[P\to R]$ for every $R\in\Rtwo$. This is the universal property of the $\Rtwo$-reflection, so $D\cong L_2P$. If $P\in\Rtwo$, \cref{thm:reflection-package}(ii) would make $i$ an isomorphism. Hence a proper such $P$ lies outside $\Rtwo$ and, in particular, is not sober.
\end{proof}

\section{Local reconstruction and powers of Johnstone's dcpo}\label{sec:reconstruction-tools}

This section develops a local reconstruction theorem and applies it to the
Johnstone-power problem. We first replace bounded sobriety by local domination
in the generation theorem of Xu and Zhao, then derive an arbitrary-product
theorem and prove that $\J^I$ is $C_{\sigma}$-unique for every set $I$.

\subsection{A reconstruction theorem under local domination}\label{subsec:local-generation}

Following Ho, Goubault-Larrecq, Jung and Xi
\cite[Definition~2.1]{ho2018}, a dcpo $S$ is
\emph{strongly complete} if every nonempty irreducible subset of its Scott
space has a supremum. For $u,v\in S$, where $S$ is strongly complete, write $u\prec_Sv$ if, for every Scott-closed irreducible subset
$\mathcal A\subseteq S$,
\(
  v\leq\bigvee\mathcal A \) implies \( u\in\mathcal A.
\)
An element $u$ is \emph{$\prec$-compact} if $u\prec_Su$.

For $A_0,A\in\widehat D$, write $A_0\triangleleft A$ if there exists
$x\in A$ such that $A_0\subseteq\down_Dx$. The dcpo $D$ is
\emph{dominated} if, for every $A\in\widehat D$, the set
\[
  \nabla A=\{A_0\in\widehat D:A_0\triangleleft A\}
\]
is Scott closed in $\widehat D$. It is \emph{locally dominated} if every
principal ideal is dominated \cite[Definition~2.4]{XuZhao2019}.

\begin{proposition}[{\cite[Propositions~2.2(4), 2.9, 2.12 and
Theorem~2.15]{ho2018}}]\label{prop:dominated-facts}
Let $D$ be a dcpo.
\begin{enumerate}[label=\textup{(\roman*)},leftmargin=2.2em]
\item $\widehat D$ is strongly complete.
\item Every principal ideal $\down_Dx$, regarded as an element of
      $\widehat D$, is $\prec$-compact.
\item If $D$ is dominated, then the $\prec$-compact elements of
      $\widehat D$ are exactly the principal ideals $\down_Dx$, $x\in D$.
\item Every strongly complete dcpo is dominated. In particular, every sober
      dcpo is dominated.
\end{enumerate}
\end{proposition}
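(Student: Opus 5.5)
This proposition is cited from Ho, Goubault-Larrecq, Jung and Xi, so the plan is to reconstruct the four items directly from the definitions. We work with $\widehat D=\Irr(\Gamma(D))$ ordered by inclusion. Throughout, we use one basic fact: directed suprema in $\widehat D$ are closures of unions. If $\{A_i\}$ is directed in $\widehat D$, then $\cl_{\sigma(D)}(\bigcup_i A_i)$ is irreducible, since a directed union of irreducible sets is irreducible and the closure of an irreducible set is irreducible. It is therefore the supremum.

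For (i), take a nonempty irreducible subset $\mathcal A$ of $\Sigma\widehat D$ and put $B=\cl_{\sigma(D)}(\bigcup\mathcal A)$. We claim $B\in\widehat D$ and $B=\bigvee\mathcal A$. Suppose $B\subseteq F\cup G$ with $F,G\in\Gamma(D)$. The sets $\widehat F=\{H:H\subseteq F\}$ and $\widehat G$ are Scott closed in $\widehat D$: they are lower, and closed under directed suprema by the formula above. Each $H\in\mathcal A$ is irreducible, so $H\subseteq F$ or $H\subseteq G$, which gives $\mathcal A\subseteq\widehat F\cup\widehat G$. Irreducibility of $\mathcal A$ then puts $\mathcal A$ inside one of them, say $\widehat F$, and so $B\subseteq F$. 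Hence $B$ is irreducible and is clearly the least upper bound.

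For (ii), suppose $\down x\subseteq\bigvee\mathcal A=B$ with $\mathcal A$ irreducible and Scott closed. The set $O=\{H\in\widehat D: x\in H\}$ is Scott open, because a directed supremum containing $x$ must have some member containing $x$: otherwise every member lies in the closed set $D\setminus\up x$, and so does the closure of their union. If $\mathcal A\cap O=\varnothing$, then every $H\in\mathcal A$ avoids $x$ and lies in the Scott-closed set $D\setminus\up x$, hence so does $B$, a contradiction. So some $H\in\mathcal A$ contains $x$, and then $\down x\subseteq H$ gives $\down x\in\mathcal A$, since $\mathcal A$ is a lower set.

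For (iii), assume $D$ is dominated and $A\prec A$. The key step is to apply the definition of $\prec$ to $\mathcal A=\nabla A$, which is Scott closed by domination. It is irreducible, being the Scott closure of the directed set $\{\down x:x\in A\}$. Indeed, $\nabla A$ is exactly $\down_{\widehat D}$ of that image, and under domination this image is dense in $\nabla A$; the image of an irreducible set is irreducible, and the closure of an irreducible set is irreducible. Its supremum contains $\bigcup_{x\in A}\down x=A$, so $A\le\bigvee\nabla A$, and $\prec$-compactness yields $A\in\nabla A$. Thus $A\subseteq\down x$ for some $x\in A$, which forces $A=\down x$. The converse is (ii).

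For (iv), let $S$ be strongly complete and $A\in\widehat S$. We would show that $\nabla A=\{A_0:\bigvee A_0\in A\}$, using that $A_0\subseteq\down y$ is equivalent to $\bigvee A_0\le y$. The map $A_0\mapsto\bigvee A_0$ is Scott continuous $\widehat S\to S$, as directed suprema commute, so $\nabla A$, being the preimage of a Scott-closed set, is Scott closed. The sober case follows because sober dcpos are strongly complete.

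We expect the main obstacle to be (iii), specifically the density and irreducibility argument for $\nabla A$. If it resists, the fallback is to cite the original sources directly.
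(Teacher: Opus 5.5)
Your arguments for (i), (iii) and (iv) work, but the argument for (ii) rests on a false claim.

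\textbf{The gap in (ii).} The set $D\setminus\up_D x$ is not Scott closed in general, because $\up_D x$ is Scott open only when $x$ is compact. Correspondingly, $O=\{H\in\widehat D:x\in H\}$ need not be Scott open in $\widehat D$. Take the chain $[0,1]$ and $x=1$. The directed family of sets $\down y$ with $y<1$ has supremum $\down 1$ in $\widehat D$, yet no member contains $1$.

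Your argument also never uses the hypothesis that $\mathcal A$ is Scott closed in $\widehat D$; it uses only lowerness and irreducibility. That hypothesis is essential. In the same example, $\mathcal A=\{\down y:0\le y<1\}$ is lower and irreducible, since it is directed. It satisfies $\down 1\le\bigvee\mathcal A=\down 1$, but $\down 1\notin\mathcal A$.

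\textbf{The repair.} Pull $\mathcal A$ back along $\eta_D$. The map $\eta_D:y\mapsto\down_D y$ is Scott continuous: for directed $\{y_i\}$,
\[
\bigvee_{\widehat D}\{\down y_i\}=\cl_{\sigma(D)}\Bigl(\bigcup_i\down y_i\Bigr)=\down\bigvee_i y_i .
\]
Hence $A=\eta_D^{-1}(\mathcal A)$ is Scott closed in $D$. Since $\mathcal A$ is lower in $\widehat D$, any $y\in H\in\mathcal A$ gives $\down y\subseteq H$, so $\down y\in\mathcal A$. Thus $\bigcup\mathcal A\subseteq A$, and therefore $\bigvee\mathcal A=\cl_{\sigma(D)}(\bigcup\mathcal A)\subseteq A$. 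Now $x\in\bigvee\mathcal A$ yields $\down x\in\mathcal A$. Your (iii) invokes (ii) for the converse inclusion, so this repair is needed there too.

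\textbf{A minor slip in (iii).} The set $\{\down x:x\in A\}$ is not directed in general. For example, take $A=\J$, which is irreducible but has distinct maximal elements. Your alternative justification is correct and suffices: the set is the image of the irreducible set $A$ under the continuous map $\eta_D$, and its Scott closure is $\nabla A$ by domination.
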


Zhao and Xu's generation theorem assumes bounded sobriety and
quasicontinuous generators \cite[Theorem~3.9]{ZhaoXu2018}. Xu and Zhao later
replaced the latter by $C_{\sigma}$-unique generators
\cite[Proposition~3.2]{XuZhao2019}. We retain this more general generator
condition and weaken only the ambient hypothesis, from bounded sobriety to
local domination.

\begin{definition}\label{def:directed-generation}
Let $D$ be a dcpo.
\begin{enumerate}[label=\textup{(\roman*)},leftmargin=2.2em]
\item An element $b\in D$ is a \emph{$C_{\sigma}$-unique element} if
      the dcpo $\down_Db$ is $C_{\sigma}$-unique.
\item The dcpo $D$ is \emph{directedly $C_{\sigma}$-generated} if, for every
      $x\in D$, there exists a directed set $B_x\subseteq\down_Dx$
      consisting of $C_{\sigma}$-unique elements such that
      $x=\bigvee B_x$.
\end{enumerate}
\end{definition}

\begin{theorem}\label{thm:local-reconstruction}
Let $D$ be a locally dominated and directedly $C_{\sigma}$-generated dcpo.
Then $D$ is $C_{\sigma}$-unique.
\end{theorem}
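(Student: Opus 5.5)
We plan to use the intrinsic characterization in \cref{thm:intrinsic-rigidity}(ii). Equivalently, by \cref{thm:canonical-envelope}: given a dcpo $Q$ and an isomorphism $\Phi:\Gamma(Q)\to\Gamma(D)$, we reconstruct an order isomorphism $Q\cong D$ directly from lattice data. The natural route follows Zhao--Xu and Xu--Zhao. $\Phi$ restricts to an order isomorphism $\widehat Q\cong\widehat D$, since $\Irr$ is defined lattice-theoretically as the nonzero finite join-prime elements. We identify $D$ and $Q$ with their principal images in $\widehat D$ and $\widehat Q$, and we must show that $\Phi$ carries $\{\down_Q y\}$ bijectively onto $\{\down_D x\}$.

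\emph{Step 1: from $Q$ into $D$.} For $y\in Q$, the ideal $\down_Qy$ is $\prec$-compact in $\widehat Q$ by \cref{prop:dominated-facts}(ii). Strong completeness and the relation $\prec$ are defined purely order-theoretically in $\widehat Q$; irreducible Scott-closed subsets of $\widehat Q$ are intrinsic to the poset. Hence $\Phi(\down_Qy)$ is $\prec$-compact in $\widehat D$. We would like to conclude from \cref{prop:dominated-facts}(iii) that it is principal, but $D$ is only locally dominated. So we localize. Put $A=\Phi(\down_Qy)$. If $A$ has an upper bound $\down_Dx$, then \cref{lem:closed-subdcpo} identifies $\widehat{\down_Dx}$ with $\widehat D\cap\down_{\widehat D}(\down_Dx)$. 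This is a Scott-closed lower set of $\widehat D$, and $\prec$-compactness transfers to it because irreducible Scott-closed subsets and their suprema inside this down-set agree with those in $\widehat D$. Domination of $\down_Dx$ then yields that $A$ is principal. Thus it suffices to show that $A$ lies below some principal ideal.

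\emph{Step 2: the main obstacle is this bounding step and the use of generation.} For the inverse direction we use directed $C_\sigma$-generation. For $x\in D$ write $x=\bigvee B_x$ with each $b\in B_x$ a $C_\sigma$-unique element. The set $\Phi^{-1}(\down_Db)$ is a member of $\Gamma(Q)$ whose lattice $\Gamma$ is isomorphic to $\Gamma(\down_Db)$ by \cref{lem:closed-subdcpo}. It is irreducible, so it is a closed set $C_b\in\Irr(\Gamma(Q))$ with $\Gamma(C_b)\cong\Gamma(\down_D b)$.

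We then need $C_b$ to be principal. By $C_\sigma$-uniqueness, $C_b$ as a dcpo is isomorphic to $\down_Db$, which has a top element. An isomorphic image of a dcpo with top has a top, so $C_b=\down_Q q_b$ for some $q_b$. By \cref{lem:principal-directed-join}, $\Phi^{-1}(\down_Dx)$ is the $\Gamma$-join of the directed family $\down_Qq_b$, namely $\down_Q\bigvee q_b$, and so it is principal. Hence every principal ideal of $D$ comes from a principal ideal of $Q$.

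For Step 1 we then argue as follows. Given $y$, write $\down_Qy$ as irreducible; its image $A$ is $\prec$-compact. Every element of $\widehat D$ is the directed sup of principal ideals it contains only when $D$ is nice, so instead we use that $A$ is the closure of the union of principals $\down_D a$, $a\in A$. Each of these pulls back to a principal $\down_Q q_a\subseteq\down_Qy$. Their sup in $\widehat D$ is $A$, and $\prec$-compactness of $A$ then forces $A\subseteq$ some $\down_D a$ with $a\in A$. If the family is not directed, the direct attempt is to use that irreducibility of $A$ makes $\{\down_D a\}$ directed in the specialization sense relevant to $\prec$. Then $A=\down_Da$.

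\emph{Step 3.} The two assignments $y\mapsto x$ with $\Phi(\down_Qy)=\down_Dx$ and its inverse are mutually inverse order isomorphisms, since $\Phi$ is one. Therefore $Q\cong D$.
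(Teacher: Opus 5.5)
Your Step~2 (principal ideals of $D$ pull back to principal ideals of $Q$ through the $C_\sigma$-unique generators and \cref{lem:principal-directed-join}) is sound. So is your localization in Step~1: a $\prec$-compact $A\in\widehat D$ that lies below some $\down_Dx$ is principal, by domination of $\down_Dx$.

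The gap is the step you flag yourself, namely showing that $A=\Phi(\down_Qy)$ is bounded by a principal ideal. Applying $A\prec A$ to the irreducible Scott-closed set $\mathcal A=\cl_{\sigma(\widehat D)}\{\down_Da:a\in A\}$, whose supremum is $A$, yields only $A\in\mathcal A$. That says $A$ lies in the Scott closure of the principal ideals below it. It does not say $A\subseteq\down_Da$ for some $a\in A$. Passing from this closure to the set $\{\down_Da:a\in A\}$ (or to $\nabla A$) is exactly domination of $D$ at $A$. Local domination provides this only once $A$ is already known to be bounded, so the argument is circular.

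In general, $\prec$-compact elements of $\widehat D$ need not be principal. Otherwise, by \cref{prop:dominated-facts}(ii), $\Gamma$ would determine every dcpo, contradicting the negative solution of the Ho--Zhao problem. Your fallback does not help either: irreducible Scott-closed sets are not directed in general, and a directed one is trivially principal because $\bigvee A\in A$. So Step~3 has no bijection to invert. What you have actually proved is only that $x\mapsto g(x)$, defined by $\Phi^{-1}(\down_Dx)=\down_Qg(x)$, is an order embedding $D\to Q$.

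The missing idea is to get boundedness from the image of $g$ rather than from $\prec$-compactness alone.
By \cref{lem:principal-directed-join}, $g$ preserves directed suprema.
If $z\le g(x)$, then $\Phi(\down_Qz)\subseteq\down_Dx$ is bounded, so your localization gives $\Phi(\down_Qz)=\down_Dx'$, i.e.\ $z=g(x')$.
Hence $g(D)$ is a lower set closed under directed suprema, so it is Scott closed.
It contains every $\down_Qg(x)$, and $\bigvee_{\Gamma(Q)}\{\down_Qg(x):x\in D\}=\Phi^{-1}(D)=Q$, so $g(D)=Q$.

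This is precisely Step~3 of the paper's proof, with $\Phi$ pointing the other way. There, local domination is used only for ideals lying below a point already in the image. With this step added, your outline becomes the paper's argument.
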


\begin{proof}
Let $Q$ be a dcpo and let
\(\Phi:\Gamma(D)\longrightarrow\Gamma(Q)\)
be an order isomorphism. If $D=\varnothing$, then $\Gamma(D)$ has one element, so $Q=\varnothing$ and the result is immediate. Assume $D\neq\varnothing$. We construct an order isomorphism $f:D\to Q$ in
three steps.

\emph{Step 1: reconstruction from principal ideals.}
Let $b$ be a $C_{\sigma}$-unique element of $D$. By
\cref{lem:closed-subdcpo}, the restriction of $\Phi$ yields
\[
  \Gamma(\down_Db)
  \cong\down_{\Gamma(D)}(\down_Db)
  \cong\down_{\Gamma(Q)}\Phi(\down_Db)
  \cong\Gamma\bigl(\Phi(\down_Db)\bigr).
\]
The Scott-closed set $\Phi(\down_Db)$ is nonempty and hence is a dcpo with
the inherited order. Since $\down_Db$ is $C_{\sigma}$-unique,
$\Phi(\down_Db)$ has a largest element, denoted by $f(b)$, and
\begin{equation}\label{eq:image-generator}
  \Phi(\down_Db)=\down_Qf(b).
\end{equation}
Consequently, for $C_{\sigma}$-unique elements $b,c$,
\begin{equation}\label{eq:generator-order}
  b\leq c\quad\Longleftrightarrow\quad f(b)\leq f(c).
\end{equation}

For $x\in D$, choose a directed set $B_x$ of $C_{\sigma}$-unique elements
with $x=\bigvee B_x$. By \eqref{eq:generator-order}, $f(B_x)$ is directed,
and \cref{lem:principal-directed-join} gives
\[
\begin{aligned}
  \Phi(\down_Dx)
  &=\Phi\left(\bigvee_{\Gamma(D)}\{\down_Db:b\in B_x\}\right)\\
  &=\bigvee_{\Gamma(Q)}\{\down_Qf(b):b\in B_x\}\\
  &=\down_Q\left(\bigvee_{b\in B_x}f(b)\right).
\end{aligned}
\]
Thus $\Phi(\down_Dx)$ has a unique largest element. Define $f(x)$ by
\begin{equation}\label{eq:image-principal}
  \Phi(\down_Dx)=\down_Qf(x).
\end{equation}
This definition is independent of the choice of $B_x$, and it yields
\begin{equation}\label{eq:order-embedding}
  x\leq x'\quad\Longleftrightarrow\quad f(x)\leq f(x').
\end{equation}
Hence $f$ is an order embedding.

\emph{Step 2: preservation of directed suprema.}
If $A\subseteq D$ is directed, then
\cref{lem:principal-directed-join} and \eqref{eq:image-principal} give
\[
\begin{aligned}
  \down_Qf\!\left(\bigvee A\right)
  &=\Phi\left(\down_D\bigvee A\right)\\
  &=\Phi\left(\bigvee_{\Gamma(D)}\{\down_Da:a\in A\}\right)\\
  &=\bigvee_{\Gamma(Q)}\{\down_Qf(a):a\in A\}\\
  &=\down_Q\bigvee f(A).
\end{aligned}
\]
Therefore
\begin{equation}\label{eq:preserves-directed-sups}
  f\!\left(\bigvee A\right)=\bigvee f(A).
\end{equation}

\emph{Step 3: local domination forces the image to be lower.}
Let $y\leq f(x)$ and put $C=\Phi^{-1}(\down_Qy)$, then $C\in\Irr(\Gamma(D))$. Moreover, $\down_Qy\subseteq\Phi(\down_Dx)$, so
$C\subseteq\down_Dx$. By \cref{lem:closed-subdcpo}, $\Phi$ therefore
restricts to an order isomorphism
\[
  \Gamma(\down_Dx)\longrightarrow\Gamma(\down_Qf(x))
\]
and hence to an order isomorphism
\[
  \widehat{\down_Dx}\longrightarrow\widehat{\down_Qf(x)}
\]
that sends $C$ to $\down_Qy$. The relation $\prec$ is invariant under order
isomorphisms of strongly complete dcpos, since such isomorphisms preserve
Scott-closed irreducible subsets and their suprema. By
\cref{prop:dominated-facts}(i),(ii), $\down_Qy$ is $\prec$-compact in
$\widehat{\down_Qf(x)}$, and hence $C$ is $\prec$-compact in
$\widehat{\down_Dx}$. Since $D$ is locally dominated, $\down_Dx$ is
dominated. Thus \cref{prop:dominated-facts}(iii) gives some $x'\leq x$ with
$C=\down_Dx'$. Equation~\eqref{eq:image-principal} now yields
$\down_Qy=\down_Qf(x')$, and hence $y=f(x')$. Therefore $f(D)$ is lower in
$Q$.

By \eqref{eq:preserves-directed-sups}, $f(D)$ is also closed under directed
suprema in $Q$, and hence is Scott closed. Finally,
\[
  Q=\Phi(D)
   =\bigvee_{\Gamma(Q)}\{\Phi(\down_Dx):x\in D\}
   =\bigvee_{\Gamma(Q)}\{\down_Qf(x):x\in D\}.
\]
The Scott-closed set $f(D)$ contains every $\down_Qf(x)$, so it contains
their join. Hence $f(D)=Q$, and $f$ is an order isomorphism.
\end{proof}

\begin{remark}\label{rem:comparison-xuzhao}
The construction of $f$ from principal ideals and the directed-generation
argument are inherited from \cite[Theorem~3.9]{ZhaoXu2018} and
\cite[Proposition~3.2]{XuZhao2019}. The new step is the proof that $f(D)$ is
lower: local domination principalizes $C$ inside
$\widehat{\down_Dx}$ and thereby replaces the use of bounded sobriety.
Since every bounded sober dcpo is locally dominated, the 2019 result is a
special case. Local domination is strictly weaker in general
\cite[Remark~3.8(2) and Proposition~4.10]{XuZhao2019}, although no strict
separation is claimed within the smaller class of directedly
$C_{\sigma}$-generated dcpos.
\end{remark}

\subsection{An arbitrary-product theorem}\label{subsec:arbitrary-products}

The preceding theorem applies to arbitrary products once its two hypotheses
are verified. Strong completeness supplies local domination of principal
ideals, while finite-coordinate truncations supply the
$C_{\sigma}$-unique generators.

\begin{proposition}\label{prop:product-strongly-complete}
An arbitrary product of strongly complete dcpos is strongly complete.
\end{proposition}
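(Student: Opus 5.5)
Let $S=\prod_{i\in I}S_i$ with each $S_i$ strongly complete, and let $\mathcal A\subseteq S$ be a nonempty irreducible subset of $\Sigma S$. The plan is to show that $\bigvee\mathcal A$ exists by computing it coordinatewise. Write $\pi_i:S\to S_i$ for the projections.

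\emph{Step 1.} Each projection $\pi_i$ is Scott continuous, because directed suprema in a product are computed coordinatewise. Continuous images of irreducible sets are irreducible, so $\pi_i(\mathcal A)$ is a nonempty irreducible subset of $\Sigma S_i$. By strong completeness of $S_i$, the supremum $s_i=\bigvee\pi_i(\mathcal A)$ exists in $S_i$.

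\emph{Step 2.} Set $s=(s_i)_{i\in I}\in S$. Then $s$ is an upper bound of $\mathcal A$ coordinatewise, hence in the product order. If $t$ is any upper bound of $\mathcal A$, then $t_i$ bounds $\pi_i(\mathcal A)$, so $s_i\le t_i$ for every $i$, and therefore $s\le t$. Thus $s=\bigvee\mathcal A$, and $S$ is strongly complete. One should also check that $S$ is a dcpo, which holds since directed sets have coordinatewise suprema. The empty-product case gives the one-point dcpo, which is trivially strongly complete.

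\emph{Main obstacle.} The only point needing care is Step 1: the argument uses the Scott topology of the product dcpo, not the product topology. This causes no difficulty, because the only fact required is that each $\pi_i:\Sigma S\to\Sigma S_i$ is continuous, which follows from Scott continuity of $\pi_i$. We never need $\Sigma S$ to coincide with the product of the spaces $\Sigma S_i$, which can fail. Irreducibility transfers along continuous maps by the standard argument: if $\pi_i(\mathcal A)\subseteq F\cup G$ with $F,G$ closed, then $\mathcal A\subseteq\pi_i^{-1}(F)\cup\pi_i^{-1}(G)$, so $\mathcal A$ lies in one of these preimages and $\pi_i(\mathcal A)$ lies in the corresponding one of $F$ or $G$.
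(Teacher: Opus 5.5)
Your proof is correct and matches the paper's argument: each $\pi_i(\mathcal A)$ is nonempty and irreducible because $\pi_i$ is Scott continuous, and the coordinatewise suprema $s_i$ assemble into $\bigvee\mathcal A$. Your observation that only continuity of $\pi_i:\Sigma S\to\Sigma S_i$ is needed, and not agreement of the Scott topology with the product topology, is exactly the right point of care.
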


\begin{proof}
Let $(S_i)_{i\in I}$ be strongly complete dcpos and put
$S=\prod_{i\in I}S_i$. Let $A\subseteq S$ be a nonempty irreducible subset.
Each coordinate projection $\pi_i:S\to S_i$ is Scott continuous, so
$\pi_i(A)$ is nonempty and irreducible. Put $s_i=\bigvee\pi_i(A)$. Then
$s=(s_i)_{i\in I}$ is an upper bound of $A$, and every upper bound $u$ of
$A$ satisfies $s_i\leq u_i$ for all $i$. Hence $s=\bigvee A$.
\end{proof}

\begin{theorem}\label{thm:arbitrary-product}
Let $(D_i)_{i\in I}$ be a family of locally quasicontinuous dcpos such that
every point of each $D_i$ lies above a minimal element. Then
$\prod_{i\in I}D_i$ is $C_{\sigma}$-unique.
\end{theorem}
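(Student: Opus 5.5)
We apply \cref{thm:local-reconstruction} to $D=\prod_{i\in I}D_i$. This requires showing that $D$ is locally dominated and directedly $C_{\sigma}$-generated. The text before \cref{prop:product-strongly-complete} suggests the two ingredients: strong completeness for domination, and finite-coordinate truncations for the generators. If $I=\varnothing$ or some $D_i=\varnothing$, the product is a point or empty and is trivially $C_{\sigma}$-unique, so assume otherwise.

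\emph{Local domination.} Fix $x\in D$. The principal ideal satisfies $\down_D x=\prod_i\down_{D_i}x_i$. Each $\down_{D_i}x_i$ is quasicontinuous, hence sober by \cite[Proposition~III-3.7]{gierz2003}. A sober dcpo is strongly complete: every irreducible set has Scott-closed irreducible closure $\down z$, and $z$ is then its supremum. By \cref{prop:product-strongly-complete}, $\down_D x$ is therefore strongly complete, and by \cref{prop:dominated-facts}(iv) it is dominated. Hence $D$ is locally dominated. This step is routine.

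\emph{Directed generation.} Fix $x\in D$ and, for each $i$, choose a minimal element $m_i\le x_i$. For each finite $J\subseteq I$, define $x^J$ by $x^J_i=x_i$ for $i\in J$ and $x^J_i=m_i$ otherwise. The family $\{x^J: J \text{ finite}\}$ is directed, since $J\subseteq J'$ implies $x^J\le x^{J'}$. Its supremum is $x$, because suprema in a product are computed coordinatewise and each coordinate is eventually constant at $x_i$. It remains to show that each $x^J$ is a $C_{\sigma}$-unique element. Since $m_i$ is minimal, $\down_{D_i}m_i=\{m_i\}$, so
\[
  \down_D x^J\cong\prod_{i\in J}\down_{D_i}x_i\times\prod_{i\notin J}\{m_i\}\cong\prod_{i\in J}\down_{D_i}x_i .
\]
This is a finite product of quasicontinuous dcpos, hence quasicontinuous by \cite[Remark~3.11(i)]{XuZhao2019}, and therefore $C_{\sigma}$-unique by \cite[Lemma~3.8]{ZhaoXu2018}. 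Thus $D$ is directedly $C_{\sigma}$-generated, and \cref{thm:local-reconstruction} finishes the proof.

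\emph{Main obstacle.} The only delicate point is the identification of principal ideals in an infinite product. One must check that $\down_D x^J$, with the inherited order, is order-isomorphic to the finite product; it is, since the coordinates outside $J$ are forced to equal $m_i$. One must also confirm that sobriety of each factor gives strong completeness, so that \cref{prop:product-strongly-complete} applies even though an infinite product of quasicontinuous dcpos need not be quasicontinuous. This is exactly why the argument routes through local domination rather than bounded sobriety of $D$ itself.
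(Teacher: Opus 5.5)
Your proposal is correct and follows the paper's proof step by step. Local domination comes from strong completeness of $\prod_i\down_{D_i}x_i$, whose factors are quasicontinuous and hence sober. Directed $C_{\sigma}$-generation comes from the finite-coordinate truncations $x^J$, whose principal ideals are finite products of quasicontinuous dcpos; \cref{thm:local-reconstruction} then finishes the proof.
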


\begin{proof}
The empty product is a one-element dcpo and is $C_{\sigma}$-unique. If a factor is empty, so is the product, and the conclusion is again immediate. Otherwise put $D=\prod_{i\in I}D_i$ and fix $x=(x_i)_{i\in I}\in D$. Its principal
ideal is order-isomorphic to
\begin{equation}\label{eq:principal-product}
  \down_Dx\cong\prod_{i\in I}\down_{D_i}x_i.
\end{equation}
Each factor on the right is quasicontinuous, hence sober and strongly
complete. By \cref{prop:product-strongly-complete}, the product in
\eqref{eq:principal-product} is strongly complete and therefore dominated
by \cref{prop:dominated-facts}(iv). Thus $D$ is locally dominated.

For each $i\in I$, choose a minimal element $r_i\leq x_i$. For a finite
subset $H\subseteq I$, define $x^H\in D$ by
\begin{equation}\label{eq:finite-coordinate-truncation}
  x^H_i=
  \begin{cases}
    x_i,&i\in H,\\
    r_i,&i\notin H.
  \end{cases}
\end{equation}
The family $\{x^H:H\subseteq I\text{ finite}\}$ is directed and has
supremum $x$. Since $\down_{D_i}r_i=\{r_i\}$,
\begin{equation}\label{eq:truncation-principal}
  \down_Dx^H\cong\prod_{i\in H}\down_{D_i}x_i.
\end{equation}
The finite product in \eqref{eq:truncation-principal} is quasicontinuous
\cite[Remark~3.11(i)]{XuZhao2019} and hence $C_{\sigma}$-unique
\cite[Lemma~3.8]{ZhaoXu2018}. Therefore every $x^H$ is a
$C_{\sigma}$-unique element, and $D$ is directedly
$C_{\sigma}$-generated. Apply \cref{thm:local-reconstruction}.
\end{proof}

\subsection{Powers of Johnstone's dcpo}\label{sec:johnstone}

The countable-power problem asks whether $\J^{\N}$ is $C_{\sigma}$-unique
\cite[Problem~9]{Zhao2022Survey}; it was repeated as
\cite[Problem~1]{Zhao2025Johnstone}. We prove the stronger arbitrary-index
statement.

Let $\infty$ be larger than every natural number. Johnstone's dcpo is
\[
  \J=\N\times(\N\cup\{\infty\}),
\]
ordered by
\begin{equation}\label{eq:johnstone-order}
  (m,n)\leq(m',n')
  \quad\Longleftrightarrow\quad
  \bigl(m=m'\text{ and }n\leq n'\bigr)
  \ \text{or}\
  \bigl(n'=\infty\text{ and }n\leq m'\bigr).
\end{equation}
This is the classical non-sober dcpo introduced by Johnstone
\cite{Johnstone1981}. Its maximal elements are the points $(m,\infty)$.

Every principal ideal of $\J$ is quasicontinuous
\cite[Proposition~3.5]{XuZhao2019}, so $\J$ is locally
quasicontinuous. Moreover, every $(m,n)\in\J$ lies above the minimal element
$(m,0)$. Indeed, if $(p,q)\leq(m,0)$, the second clause of
\eqref{eq:johnstone-order} is unavailable, so $p=m$ and $q=0$.

\begin{theorem}\label{thm:J-arbitrary-power}
For every set $I$, the dcpo $\J^I$ is $C_{\sigma}$-unique.
\end{theorem}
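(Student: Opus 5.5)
The plan is to apply \cref{thm:arbitrary-product} to the constant family $D_i=\J$ for $i\in I$. The argument will therefore consist only of verifying the two hypotheses of that theorem for $\J$; both have already been recorded in the paragraph preceding the statement.

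First, local quasicontinuity. By \cite[Proposition~3.5]{XuZhao2019}, every principal ideal $\down_{\J}(m,n)$ is quasicontinuous, so $\J$ is locally quasicontinuous. For completeness, I would recall why this holds. For $n<\infty$, the ideal $\down_{\J}(m,n)$ is the finite chain $\{(m,0),\dots,(m,n)\}$, which is clearly quasicontinuous. For $n=\infty$, the ideal consists of the chain $\{(m,k):k\in\N\}\cup\{(m,\infty)\}$ together with all points $(p,q)$ with $q\le m$. This set is the union of a chain $\omega+1$ with finitely many finite chains sitting below the top, and a finite-set way-below argument gives quasicontinuity. I will simply cite the proposition rather than redo this check.

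Second, minimal elements. Every $(m,n)$ lies above $(m,0)$, and $(m,0)$ is minimal: if $(p,q)\le(m,0)$, then the second clause of \eqref{eq:johnstone-order} would require $0=\infty$, so $p=m$ and $q=0$.

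With both hypotheses in place, \cref{thm:arbitrary-product} yields that $\J^I=\prod_{i\in I}\J$ is $C_{\sigma}$-unique. The cases $I=\varnothing$ (a one-point dcpo) and nonempty $I$ are both covered by that theorem, so no case split is needed here. There is no real obstacle at this stage. All the substantive work, namely local domination via strong completeness of products of sober principal ideals and directed generation by finite truncations, was done in \cref{thm:local-reconstruction} and \cref{thm:arbitrary-product}. The only point to take care over is making sure the cited local quasicontinuity of $\J$ applies to every principal ideal, including those of the maximal points $(m,\infty)$.
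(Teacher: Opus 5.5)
Your proposal is correct and matches the paper's proof: apply \cref{thm:arbitrary-product} to the constant family $D_i=\J$, citing \cite[Proposition~3.5]{XuZhao2019} for local quasicontinuity and using that each $(m,n)$ lies above the minimal element $(m,0)$. One harmless slip in your aside: $\down_{\J}(m,\infty)$ contains a separate finite chain $(p,0)<\cdots<(p,m)$ for every $p\neq m$, so there are infinitely many such chains, not finitely many; this does not affect the argument, because you rely on the citation, and the ideal is in fact algebraic, with $(m,k)\ll(m,\infty)$ inside it.
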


\begin{proof}
Apply \cref{thm:arbitrary-product} to the constant family $D_i=\J$.
\end{proof}

Taking $I=\N$ answers \cite[Problem~9]{Zhao2022Survey}, repeated as \cite[Problem~1]{Zhao2025Johnstone}, affirmatively. For finite $I$, the conclusion already follows from \cite[Proposition~3.5, Corollary~3.3 and Remark~3.11(i)]{XuZhao2019}; \cref{thm:J-arbitrary-power} extends it to every index set. For every nonempty index set \(I\), the dcpo \(\mathbb J^I\) is non-sober. The proof of \(C_\sigma\)-uniqueness uses local domination and finite-coordinate generators, without asserting local quasicontinuity or bounded sobriety.
For infinite $I$, the proof does not show that $\J^I$ is locally
quasicontinuous, bounded sober or sober. Instead, it proves that $\J^I$ is
locally dominated and reconstructs the product from $C_{\sigma}$-unique
finite-coordinate generators. Thus the countable-power problem is resolved
without a product-sobriety theorem.

\section{\texorpdfstring{$C_\sigma$}{C-sigma}-uniqueness is not preserved under products}\label{sec:product-counterexample}

In this section, we construct a countable sober \(C_{\sigma}\)-unique dcpo \(K\) such that \(K \times \Ichain\), where \(\Ichain=(0,1]\), remains sober but is not \(C_{\sigma}\)-unique. This provides a negative answer to \cite[Problem~8]{Zhao2022Survey}, which asks whether \(C_{\sigma}\)-uniqueness is preserved under products.

Put $X=K\times\Ichain$ and write $p=(p_K,p_{\Ichain})$ for $p\in X$. We construct a proper countable sub-dcpo $\Pprod\subseteq X$ satisfying \eqref{eq:SD} and \eqref{eq:SC}. Marker points recover principal ideals, while private rows provide directed lifting. The main verification is that the union of all construction stages remains closed under directed suprema in $X$.

\subsection{A countable \texorpdfstring{$C_{\sigma}$}{C-sigma}-unique factor}\label{subsec:product-factor}

We first give out the construction of $K$. Let
\(
  \overline{\Npos}=\Npos\cup\{\infty\}
\)
with the usual chain order, where every positive integer is below $\infty$.
For $f:\Npos\to\overline{\Npos}$ put
\[
  \supp(f)=\{i\in\Npos:f(i)<\infty\},
\]
and define
\begin{equation}\label{eq:product-factor}
  K=\{f:\Npos\to\overline{\Npos}:\supp(f)\text{ is finite}\},
\end{equation}
ordered coordinatewise. Denote the constant $\infty$ profile by $\top_K$.

\begin{proposition}\label{prop:product-factor-order}
The dcpo $K$ is countable and has greatest element $\top_K$. Every nonempty
subset $A\subseteq K$ has the coordinatewise supremum
\begin{equation}\label{eq:K-sup}
  \left(\bigvee_K A\right)(i)=\bigvee\{f(i):f\in A\}.
\end{equation}
Finite nonempty infima are also coordinatewise. Moreover, for every
$f\in K$, the principal filter $\up_K f$ contains no infinite antichain.
\end{proposition}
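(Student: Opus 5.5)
Three things need checking: countability with the top element, coordinatewise suprema and infima, and the absence of infinite antichains in each principal filter. We do them in that order.

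\emph{Countability and top element.} An element $f\in K$ is determined by the finite set $\supp(f)\subseteq\Npos$ together with the values $f(i)\in\Npos$ for $i\in\supp(f)$. There are countably many finite subsets of $\Npos$, and for each one countably many value assignments, so $K$ is countable. The constant profile $\top_K$ has empty support, so it lies in $K$, and it is clearly the greatest element.

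\emph{Suprema and infima.} Let $A\subseteq K$ be nonempty and define $s(i)=\bigvee\{f(i):f\in A\}$ in $\overline{\Npos}$, which is a complete chain. Pick any $f_0\in A$. Then $s\geq f_0$ coordinatewise, so $s(i)=\infty$ whenever $f_0(i)=\infty$. Hence $\supp(s)\subseteq\supp(f_0)$ is finite and $s\in K$. Since $K$ carries the coordinatewise order, $s$ is the least upper bound of $A$, which proves \eqref{eq:K-sup}. In particular $K$ is a dcpo, and in fact a complete lattice once the empty join is supplied by the pointwise infimum. For finite nonempty $A$, the pointwise minimum $m$ has $\supp(m)=\bigcup_{f\in A}\supp(f)$, a finite union of finite sets, so $m\in K$ and $m$ is the infimum. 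This finiteness is exactly why only finite infima are claimed.

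\emph{No infinite antichain in $\up_K f$.} Fix $f\in K$ and put $S=\supp(f)$, a finite set. Every $g\geq f$ equals $\infty$ off $S$, so $\up_K f$ is order-isomorphic to the product $\prod_{i\in S}\{n\in\overline{\Npos}: n\geq f(i)\}$ via restriction to $S$. Each factor is a chain of order type $\omega+1$, hence a well-quasi-order. A finite product of well-quasi-orders is a well-quasi-order by Dickson's lemma. Concretely, given any infinite sequence, we pass to a subsequence that is nondecreasing in each coordinate in turn, using that every sequence in a well-ordered chain has a nondecreasing subsequence, and any two of its terms are comparable. So every infinite subset of $\up_K f$ contains two comparable elements, and there is no infinite antichain.

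The only step needing an actual argument is the last one. Its key observation is that the finite support of $f$ confines $\up_K f$ to finitely many active coordinates. It matters that we work with principal filters and not all of $K$: unbounded supports do produce infinite antichains, for example the profiles equal to $1$ at a single coordinate $i$ and $\infty$ elsewhere.
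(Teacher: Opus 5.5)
Your proof is correct and follows the paper's argument essentially step for step: you bound $\supp(\bigvee A)$ by $\supp(f_0)$, identify $\up_K f$ with the finite product $\prod_{i\in\supp(f)}[f(i),\infty]$ by restricting to $\supp(f)$, and extract a coordinatewise nondecreasing subsequence. One aside needs correcting, though the statement does not depend on it: $K$ itself is not a complete lattice, because it has no least element. Any $f\in K$ can be lowered to $1$ at a coordinate outside $\supp(f)$, and the pointwise infimum of $K$, the constant $1$, has infinite support. So a bottom has to be adjoined, just as is done for $F_0$ in the frame construction.
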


\begin{proof}
There are only countably many finite supports and, on each fixed finite
support, only countably many assignments of positive integers. Thus $K$ is
countable. Choose $f_0\in A$ and let $a$ be the coordinatewise supremum in
\eqref{eq:K-sup}. Since $a\geq f_0$, one has
$\supp(a)\subseteq\supp(f_0)$, so $a\in K$; it is plainly the least upper
bound. The assertion about finite infima is immediate from finite support.

For fixed $f\in K$, restriction to $\supp(f)$ gives
\[
  \up_K f\cong
  \prod_{i\in\supp(f)}[f(i),\infty]_{\overline{\Npos}}.
\]
Every infinite sequence in a well-ordered chain has a nondecreasing
subsequence. Successively passing to subsequences in the finitely many
coordinates gives a coordinatewise nondecreasing subsequence. Hence the
finite product, and therefore $\up_K f$, has no infinite antichain.
\end{proof}

\begin{lemma}\label{lem:down-directed-closed-K}
If $S\subseteq K$ is closed under nonempty directed suprema computed in
$K$, then
\begin{equation}\label{eq:down-S-closed}
  \down_K S\in\Gamma(K).
\end{equation}
The assertion also holds for $S=\varnothing$.
\end{lemma}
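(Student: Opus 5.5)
We must show that $\down_K S$ is closed under directed suprema. It is clearly a lower set, so fix a directed $E\subseteq\down_K S$ with $x=\bigvee E$; we want some $s\in S$ with $x\le s$. The empty case is trivial, since $\down_K\varnothing=\varnothing$ is Scott closed. The plan is to use the fact that $\up_K f$ has no infinite antichain (\cref{prop:product-factor-order}), together with finite supports, to extract from $S$ a directed family whose supremum lies in $S$ and dominates $x$.

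\textbf{Step 1: reduce to an increasing sequence.} Since $K$ is countable, $E$ has an increasing cofinal sequence $e_0\le e_1\le\cdots$. If $E$ has a largest element we are done, so assume not. For each $n$ pick $s_n\in S$ with $e_n\le s_n$. Every $s_n$ lies in $\up_K e_0$. Any sequence in $\up_K e_0$ admits a coordinatewise nondecreasing subsequence: restrict to the finite set $\supp(e_0)$ and argue as in \cref{prop:product-factor-order}, noting that outside $\supp(e_0)$ all values equal $\infty$. Pass to such a subsequence $s_{n_0}\le s_{n_1}\le\cdots$.

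\textbf{Step 2: take the supremum in $S$.} The chain $\{s_{n_k}\}$ is directed and contained in $S$, so by hypothesis $s=\bigvee_k s_{n_k}\in S$, computed coordinatewise by \eqref{eq:K-sup}. Since $e_{n_k}\le s_{n_k}\le s$ and the $e_{n_k}$ are cofinal in $E$, we get $x=\bigvee E\le s$. Hence $x\in\down_K S$, and $\down_K S\in\Gamma(K)$.

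The main obstacle is Step 1: an arbitrary choice of witnesses $s_n$ need not be directed. The key point is that all witnesses lie in the single filter $\up_K e_0$, which is isomorphic to a finite product of well-ordered chains. There, Ramsey-type subsequence extraction yields a chain. I expect this to be straightforward, since it is exactly the argument already used in \cref{prop:product-factor-order}.
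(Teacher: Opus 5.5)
Your proof is correct, but it takes a different route from the paper's.

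The paper does not pass to sequences. It fixes $b_0$ in the directed set and puts $T=S\cap\up_K b_0$. It then uses Zorn's lemma, together with closure of $S$ under directed suprema, to place every element of $T$ below a maximal element of $T$. Next it invokes the stated conclusion of \cref{prop:product-factor-order}: these maximal elements form a finite antichain $m_1,\ldots,m_k$. It finishes because $\bigcup_j\down_K m_j$ is a finite union of Scott-closed principal ideals containing a cofinal part of the directed set.

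You instead use countability of $K$ to obtain a cofinal increasing sequence. You then use the stronger sequential fact established inside the \emph{proof} of \cref{prop:product-factor-order}: every sequence in $\up_K e_0$ has a coordinatewise nondecreasing subsequence. This turns your arbitrary witnesses $s_n$ into a chain in $S$, whose supremum is a single element of $S$ above $x$.

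Your version is more elementary. It avoids Zorn's lemma and the finite-union step, and it produces an explicit witness $s\in S$ with $x\le s$. The paper's version needs only the absence of infinite antichains in principal filters and never uses countability. So the same argument works verbatim in any dcpo whose principal filters have no infinite antichains. Yours relies additionally on countability of $K$ and on well-foundedness of $\up_K e_0$, which hold here but would be extra hypotheses in general.
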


\begin{proof}
The empty case is trivial. Let $B\subseteq\down_KS$ be directed and choose
$b_0\in B$. The set $B_0=B\cap\up_Kb_0$ is directed and cofinal in $B$. For every $b\in B_0$, choose $s\in S$ with $b\leq s$; then $s\in T=S\cap\up_Kb_0$. Thus $T\neq\varnothing$ and $B_0\subseteq\down_KT$. Every element of $T$ lies below a maximal element
of $T$: apply Zorn's lemma to $T\cap\up_Ks$, using closure of $S$ under
nonempty directed suprema. By \cref{prop:product-factor-order}, the set of
maximal elements of $T$ is a finite antichain, say
$m_1,\ldots,m_k$. Hence
\(B_0\subseteq\bigcup_{j=1}^k\down_Km_j\).
The right-hand side is a finite union of Scott-closed sets. Therefore
$\bigvee B=\bigvee B_0$ belongs to it and hence to $\down_KS$.
\end{proof}

\begin{proposition}\label{prop:K-sober}
The Scott space $\Sigma K$ is sober.
\end{proposition}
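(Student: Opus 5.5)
Let $A\in\Irr(\Gamma(K))$ be nonempty. Since $K$ is a sup-complete dcpo by \cref{prop:product-factor-order}, the plan is to take $a=\bigvee_K A$ and prove $a\in A$; then $A=\down_K a$, because $A$ is lower and $A\subseteq\down_K a$. Since $K$ is countable, it suffices to exhibit an increasing sequence in $A$ whose supremum is $a$. Write $\supp(a)=\{i_1,\dots,i_r\}$, a finite set.

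\textbf{Coordinates off the support.} For $i\notin\supp(a)$ we have $a(i)=\infty$. For each $n\in\Npos$, the set
\[
  U_{i,n}=\{f\in K:f(i)>n\}
\]
is Scott open: it is an upper set, and it is inaccessible by directed suprema because suprema are coordinatewise and $\overline{\Npos}$ is a chain in which $\infty$ is approached only from below. For each $j\leq r$, the set $V_j=\{f:f(i_j)\geq a(i_j)\}$ is likewise Scott open; it is upper, and $a(i_j)$ is a finite value attained by some member of $A$.

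\textbf{Using irreducibility.} Every one of these open sets meets $A$, because $a$ is the coordinatewise supremum of $A$. Irreducibility of $A$ means that finitely many Scott-open sets, each meeting $A$, have an intersection that meets $A$. For each $N$, pick $g_N\in A$ with
\[
  g_N\in \bigcap_{j}V_j\cap\bigcap_{i\leq N,\ i\notin\supp(a)}U_{i,N}.
\]
Then $g_N$ agrees with $a$ on $\supp(a)$, because $g_N\leq a$. Moreover $g_N(i)>N$ for every $i\leq N$ outside the support.

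\textbf{Making the sequence increasing.} The sequence $(g_N)$ need not be increasing. Since $A$ is lower, I would instead use the truncations $h_N$ defined by
\[
  h_N(i)=\begin{cases}a(i), & i\in\supp(a),\\ N, & i\leq N,\ i\notin\supp(a),\\ \infty, & i>N.\end{cases}
\]
This is wrong as written: $h_N$ has value $\infty$ at the coordinates $i>N$, so it need not lie below $g_N$. The obstacle is that each element of $A$ has finite support, so it is $\infty$ almost everywhere but may be finite at large coordinates. The fix is to build $h_N$ from $g_N$ directly. Set
\[
  h_N=g_1\wedge\cdots\wedge g_N\wedge c_N?
\]
Meets, however, go downward and destroy the lower bounds. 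A better choice uses directedness. Define $k_N(i)=\min(g_N(i),N)$ for $i\notin\supp(a)$ with $i\leq N$, and $k_N(i)=g_N(i)$ otherwise. Then $k_N\leq g_N$, so $k_N\in A$. This is still not monotone in $N$.

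Instead, I would obtain monotonicity inside the intersection. At stage $N$, add the open set $\up$-like condition $\{f: f\geq k_{N-1}\ \text{pointwise on the finite set } \supp(k_{N-1})\cup\{1,\dots,N\}\}$. Since $k_{N-1}$ has finite support, the set $\{f:f\geq k_{N-1}\}$ is a finite intersection of sets of the form $\{f:f(i)\geq m\}$. Each of these is Scott open by the argument above and meets $A$, since $k_{N-1}\in A$. Irreducibility then yields $g_N\in A$ with $g_N\geq k_{N-1}$ as well as $g_N(i)>N$ for the required $i$. Now truncate $g_N$ only where this preserves $g_N\geq k_{N-1}$; it is cleanest simply to take $k_N=g_N$.

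The resulting sequence is increasing and lies in $A$. Its coordinatewise supremum equals $a$: on the support it is exact, and off the support each coordinate is unbounded, hence equal to $\infty$. Scott closedness of $A$ then gives $a\in A$.

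The main obstacle is the step above: verifying that the sets $\{f:f(i)\geq m\}$ are Scott open in $K$, which follows from \eqref{eq:K-sup}. With that in hand, the argument needs only finitely many open conditions at each stage, so irreducibility applies directly, and \cref{lem:down-directed-closed-K} is not needed.
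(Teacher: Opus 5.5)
Your proposal has a genuine gap, in the step that is supposed to make the sequence increasing. The preliminary steps are correct: the sets $U_{i,n}$ and $V_j$ are Scott open, they meet $A$, and irreducibility lets you meet finitely many of them at once.

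The failure is the claim that $\{f\in K:f\geq k_{N-1}\}=\up_K k_{N-1}$ is a finite intersection of sets $\{f:f(i)\geq m\}$ with $m$ finite. It is not. We have $k_{N-1}(i)=\infty$ for every $i\notin\supp(k_{N-1})$, so $f\geq k_{N-1}$ forces $f(i)=\infty$ at infinitely many coordinates. In fact $\up_K k$ is never Scott open: $k$ is the supremum of the increasing sequence obtained by lowering one coordinate outside $\supp(k)$ to $n$.

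If you impose $f\geq k_{N-1}$ only on finitely many coordinates, the condition is open. But then $g_N$ may be finite at coordinates where $k_{N-1}$ is $\infty$, so $g_N\not\geq k_{N-1}$, and no increasing sequence results. An increasing sequence has decreasing supports, so you would need members of $A$ that are $\infty$ outside one fixed finite set. Finitely many lower-bound conditions on finitely many coordinates can never force that.

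This cannot be repaired inside your framework. The only open sets you test irreducibility against are finite intersections of sets $\{f:f(i)\geq m\}$, i.e.\ the product topology. That topology cannot detect reducibility in $\Sigma K$.

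Here is a set that passes all your tests. Let $z_k\in K$ take value $1$ at $k$ and $\infty$ elsewhere, and put $A=\down_K\{z_k:k\in\Npos\}=\{f\in K:f(k)=1\text{ for some }k\}$. Then $A\in\Gamma(K)$, for instance by \cref{lem:down-directed-closed-K} applied to the antichain $\{z_k\}$. Every finite intersection of your sets $U_{i,n}$ and $\{f:f(i)\geq m\}$ contains $z_k$ for all large $k$. So each of your irreducibility steps goes through, e.g.\ with $g_N=z_{N+1}$, and your procedure outputs $z_1,z_2,\ldots$. Yet $\bigvee_KA=\top_K\notin A$.

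Of course $A=\down_Kz_1\cup\down_K\{z_k:k\geq 2\}$ is reducible. The second piece, however, is Scott closed without being closed in the product topology.

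So any proof must use Scott-closed sets of this non-product kind, and this is exactly what the paper does. Zorn's lemma gives $A=\down_K\operatorname{Max}(A)$. Then \cref{lem:down-directed-closed-K}, which rests on the absence of infinite antichains in $\up_Kf$ from \cref{prop:product-factor-order}, shows that the lower closure of any set of maximal elements is Scott closed. Irreducibility therefore leaves a single maximal element. Your closing remark that \cref{lem:down-directed-closed-K} is not needed is backwards: that lemma, or an equivalent use of the finite-antichain property, is the missing idea.
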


\begin{proof}
Let $C\in\Gamma(K)$ be nonempty and irreducible. Since $C$ is Scott
closed, it is a sub-dcpo of $K$. For each $c\in C$, every chain in
$C\cap\up_Kc$ is directed and has its supremum again in $C\cap\up_Kc$.
Zorn's lemma therefore gives a maximal element of $C$ above $c$, and hence
$C=\down_K\operatorname{Max}(C)$. Every subset of
$\operatorname{Max}(C)$ has only singleton nonempty directed subsets and is
therefore closed under ambient directed suprema. By
\cref{lem:down-directed-closed-K}, the lower closure of every such subset is
Scott closed in $K$.

If $\operatorname{Max}(C)$ contained two distinct elements, choose
$m\in\operatorname{Max}(C)$. Then
\[
  C=\down_Km\cup
    \down_K\bigl(\operatorname{Max}(C)\setminus\{m\}\bigr)
\]
would express $C$ as the union of two proper Scott-closed subsets,
contradicting irreducibility. Thus $C=\down_Km$ for a unique $m\in K$.
\end{proof}

\begin{theorem}\label{thm:K-unique}
The dcpo $K$ is $C_{\sigma}$-unique.
\end{theorem}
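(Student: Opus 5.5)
The plan is to apply the intrinsic characterization, \cref{thm:intrinsic-rigidity}\textup{(iii)}. Membership $K\in\Rtwo$ is immediate: $\Sigma K$ is sober by \cref{prop:K-sober}, and every sober dcpo belongs to $\Rtwo$ by \cref{thm:reflection-package}(v). It then suffices to show that every sub-dcpo $P\subseteq K$ satisfying \eqref{eq:SD} and \eqref{eq:SC} equals $K$. I expect \eqref{eq:SD} alone to force this, and I do not plan to use \eqref{eq:SC}.

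Fix $x\in K$ and put $S=P\cap\down_Kx$. The first step is to check that $S$ is closed under nonempty directed suprema computed in $K$. If $E\subseteq S$ is directed, then $\bigvee_K E\in P$ because $P$ is a sub-dcpo. Also $\bigvee_K E\leq x$, because $x$ is an upper bound of $E$. Hence $\bigvee_K E\in S$.

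The second step applies \cref{lem:down-directed-closed-K} to $S$. It gives $\down_KS\in\Gamma(K)$. Since $\down_KS$ is the least lower set containing $S$, and it is already Scott closed, we get
\[
  \cl_{\sigma(K)}(S)=\down_KS.
\]
Combining this with \eqref{eq:SD} yields $\down_KS=\down_Kx$. In particular $x\leq s$ for some $s\in S$. Since also $s\leq x$, we get $x=s\in P$. As $x$ was arbitrary, $P=K$, and \cref{thm:intrinsic-rigidity} gives $C_{\sigma}$-uniqueness.

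The only substantial ingredient is \cref{lem:down-directed-closed-K}. It rests on the absence of infinite antichains in principal filters of $K$ (\cref{prop:product-factor-order}) and is already available. So the main point to verify is simply that $P\cap\down_Kx$ meets the hypothesis of that lemma, which is the routine check in the first step. No reconstruction argument via \cref{thm:local-reconstruction} is needed. That route would be harder anyway, because generating $K$ directedly by $C_{\sigma}$-unique elements is unclear: $K$ does not appear to be quasicontinuous near $\top_K$.
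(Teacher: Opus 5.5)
Your proof is correct and is essentially the paper's argument. The paper also combines sobriety of $K$ with \cref{thm:intrinsic-rigidity} and applies \cref{lem:down-directed-closed-K} to $S\cap\down_Kx$; the only cosmetic difference is that it obtains $\down_K(S\cap\down_Kx)=\down_Kx$ from injectivity of restriction $\Gamma(K)\to\Gamma(S)$ rather than by quoting \eqref{eq:SD}, and by the proof of \cref{thm:closure-characterization} these are the same step.
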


\begin{proof}
Since $K$ is sober, it belongs to $\Rtwo$.
By \cref{thm:intrinsic-rigidity}, it suffices to show that
$K$ has no proper $2$-equable sub-dcpo.
Let $S\subseteq K$ be a sub-dcpo for which
restriction $\Gamma(K)\to\Gamma(S)$ is an order isomorphism. Fix $x\in K$.
The set $S_x=S\cap\down_Kx$ is closed under ambient directed suprema, so
\cref{lem:down-directed-closed-K} gives
$F_x=\down_KS_x\in\Gamma(K)$. Moreover,
\(F_x\cap S=S\cap\down_Kx=(\down_Kx)\cap S\).
Injectivity of restriction yields $F_x=\down_Kx$. Since $x\in F_x$, there is
$s\in S_x$ with $x\leq s\leq x$, hence $x=s\in S$. Thus $S=K$.
\end{proof}

\subsection{The retained sub-dcpo}\label{subsec:product-rows}

The chain $\Ichain$ is a continuous dcpo and hence
$C_{\sigma}$-unique \cite[Lemma~3.8]{ZhaoXu2018}.

Choose a sequence $(\theta_i)_{i\in\Npos}$ in
$\mathbb Q\cap(0,1)$ in which every rational in $(0,1)$ occurs infinitely
many times. For $i,n\in\Npos$ define
\begin{equation}\label{eq:beta}
  \beta_i(n)=\theta_i+\frac{1-\theta_i}{n+1}.
\end{equation}
For fixed $i$, the sequence $(\beta_i(n))_n$ is strictly decreasing to
$\theta_i$. Define the initial marker set
\begin{equation}\label{eq:P0-product}
  \Pstageprod{0}=\{(f,\beta_i(n)):f\in K,\ i,n\in\Npos,\ f(i)=n\}.
\end{equation}
Let $X_{\mathbb Q}=K\times(\mathbb Q\cap(0,1))$ and let $\Tprod$ be the
countable set of triples
\begin{equation}\label{eq:T-product}
\begin{aligned}
  \Tprod=\{(a,H,q):\;&a\in K,\quad H\subseteq\Npos\text{ finite},
  \quad \supp(a)\subseteq H,\\
  &q=(b,r)\in X_{\mathbb Q},\quad b\leq a\}.
\end{aligned}
\end{equation}
Choose once and for all an injection
\[
  \hprod:\Tprod\longrightarrow\Npos
\]
such that, for $\tau=(a,H,(b,r))$,
\begin{equation}\label{eq:h-product}
  \hprod(\tau)\notin H\cup\supp(b).
\end{equation}
This is possible by enumerating $\Tprod$ and avoiding the finite
forbidden set together with the previously used private coordinates.

For $a\in K$, finite $H\supseteq\supp(a)$, and $n\in\Npos$, define
\begin{equation}\label{eq:u-product}
  u^{\times}_{a,H,n}(i)=
  \begin{cases}
    \min\{a(i),n\},&i\in H,\\
    \infty,&i\notin H,
  \end{cases}
\end{equation}
and, for $k,n\in\Npos$,
\begin{equation}\label{eq:z-product}
  z^{\times}_{k,n}(i)=
  \begin{cases}
    n,&i=k,\\
    \infty,&i\neq k.
  \end{cases}
\end{equation}
If $\tau=(a,H,q)\in\Tprod$ and $q=(b,r)$, define
\begin{equation}\label{eq:row-product}
  \rowprod(\tau,n)=
  \left(
    b\wedge u^{\times}_{a,H,n}\wedge z^{\times}_{\hprod(\tau),n},
    \frac{n}{n+1}r
  \right).
\end{equation}
The meet is coordinatewise minimum: $b$ fixes the endpoint, $u^{\times}_{a,H,n}$ controls ambient approximation, and $z^{\times}_{\hprod(\tau),n}$ records the row at a private coordinate.

\begin{lemma}\label{lem:product-rows}
For every $\tau=(a,H,q)\in\Tprod$, the sequence
$(\rowprod(\tau,n))_{n\in\Npos}$ is strictly increasing and
\begin{equation}\label{eq:row-product-sup}
  \bigvee_{n\in\Npos}\rowprod(\tau,n)=q.
\end{equation}
At the private coordinate $k=\hprod(\tau)$ one has
\begin{equation}\label{eq:row-private-coordinate}
  \rowprod(\tau,n)_K(k)=n.
\end{equation}
\end{lemma}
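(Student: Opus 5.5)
We verify the claims coordinatewise, treating the $K$-component and the $\Ichain$-component separately, since the order on $X=K\times\Ichain$ is the product order.

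\emph{The $\Ichain$-component.} Here $\frac{n}{n+1}r$ is strictly increasing in $n$ because $r>0$, and it converges to $r$. So its supremum in $\Ichain$ is $r$, and strictness of the sequence in $X$ already follows from this component.

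\emph{The $K$-component is nondecreasing.} Write $g_n=b\wedge u^{\times}_{a,H,n}\wedge z^{\times}_{k,n}$ with $k=\hprod(\tau)$. Both $u^{\times}_{a,H,n}$ and $z^{\times}_{k,n}$ are nondecreasing in $n$ coordinatewise: $\min\{a(i),n\}$ increases with $n$, and $\infty$ is constant. Hence $g_n$ is nondecreasing. Each $g_n$ has finite support, contained in $\supp(b)\cup H\cup\{k\}$, so $g_n\in K$.

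\emph{The $K$-component converges to $b$.} By \cref{prop:product-factor-order}, the supremum in $K$ is coordinatewise, so we compute $\bigvee_n g_n(i)$ in three cases.
\begin{itemize}[leftmargin=2.2em]
\item For $i\in H$: since $\supp(a)\subseteq H$ and $b\le a$, we have $\sup_n\min\{a(i),n\}=a(i)\ge b(i)$. Hence $\sup_n\min\{b(i),a(i),n\}=b(i)$; if $b(i)=\infty$, then $a(i)=\infty$ and the value is $\sup_n n=\infty$.
\item For $i\notin H$ with $i\ne k$: $g_n(i)=b(i)$ for every $n$.
\item For $i=k$: by \eqref{eq:h-product}, $k\notin H\cup\supp(b)$, so $b(k)=\infty$ and $u^{\times}_{a,H,n}(k)=\infty$. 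Therefore $g_n(k)=n$, which gives \eqref{eq:row-private-coordinate}, and the supremum is $\infty=b(k)$.
\end{itemize}
Thus $\bigvee_n g_n=b$.

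\emph{The supremum in $X$.} Suprema in $X$ are computed componentwise, so combining the two components gives $\bigvee_n\rowprod(\tau,n)=(b,r)=q$, which is \eqref{eq:row-product-sup}.

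The only point needing care is the case analysis at $i\in H$ and at the private coordinate. This is exactly where \eqref{eq:h-product} and the condition $b\le a$ in the definition of $\Tprod$ are used; everything else is routine.
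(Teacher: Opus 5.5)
Your proof is correct and follows essentially the same route as the paper: the same coordinatewise case split ($i\in H$, $i\notin H\cup\{k\}$, $i=k$), with \eqref{eq:h-product} giving $b(k)=u^{\times}_{a,H,n}(k)=\infty$ at the private coordinate, $b\le a$ reducing the $H$-coordinates to $\min\{b(i),n\}$, and strictness coming from the second coordinate $\frac{n}{n+1}r$ with $r>0$. The only step you leave implicit is that $z^{\times}_{k,n}$ contributes $\infty$ on $H$ because $k\notin H$, which you use when writing $\min\{b(i),a(i),n\}$.
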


\begin{proof}
Write $q=(b,r)$ and $k=\hprod(\tau)$. By \eqref{eq:h-product}, both $b(k)$ and
$u^{\times}_{a,H,n}(k)$ equal $\infty$, so \eqref{eq:row-private-coordinate} holds.
The first coordinate of every row has finite support contained in
$\supp(b)\cup H\cup\{k\}$, and the second coordinate belongs to
$\mathbb Q\cap(0,r)$. All coordinates are nondecreasing in $n$, while the
second coordinate is strictly increasing.

On $H$, the first coordinate is $\min\{b(i),n\}$ because $b\leq a$; outside
$H\cup\{k\}$ it is $b(i)$; and at $k$ it increases to
$\infty=b(k)$. The second coordinate increases to $r$. Thus the supremum is
$q$.
\end{proof}

Define recursively
\begin{equation}\label{eq:P-product}
\begin{aligned}
  \Pstageprod{m+1}
  &=\Pstageprod{m}\cup
    \{\rowprod(\tau,n):\tau=(a,H,q)\in\Tprod,\ q\in \Pstageprod{m},\ n\in\Npos\},\\
  \Pprod
  &=\bigcup_{m\in\N}\Pstageprod{m}.
\end{aligned}
\end{equation}

\begin{proposition}\label{prop:product-P-basic}
The set $\Pprod$ is countable. Every $p\in \Pprod$ satisfies
\begin{equation}\label{eq:P-product-basic}
  0<p_{\Ichain}<1,\qquad p_{\Ichain}\in\mathbb Q,
  \qquad \supp(p_K)\neq\varnothing.
\end{equation}
Consequently
\begin{equation}\label{eq:P-product-no-top-profile}
  \Pprod\cap(\{\top_K\}\times\Ichain)=\varnothing.
\end{equation}
Moreover, $\Pprod$ has no greatest element.
\end{proposition}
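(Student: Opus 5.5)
Induction on the stage $m$ in \eqref{eq:P-product} handles everything. Countability first: $\Pstageprod{0}$ is indexed by the countable set of triples $(f,i,n)$ with $f\in K$ (countable by \cref{prop:product-factor-order}) and $f(i)=n$. Each passage $\Pstageprod{m}\to\Pstageprod{m+1}$ adds at most $|\Tprod|\cdot|\Npos|$ points, and $\Tprod$ is countable. So each stage is countable, and so is the countable union $\Pprod$.

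The three properties in \eqref{eq:P-product-basic} come from the same induction. For $p=(f,\beta_i(n))\in\Pstageprod{0}$ we have $f(i)=n<\infty$, so $i\in\supp(f)$. Also $\beta_i(n)\in\mathbb Q$ since $\theta_i\in\mathbb Q$, and $\theta_i<\beta_i(n)<1$ because $0<(1-\theta_i)/(n+1)<1-\theta_i$. For a new point $\rowprod(\tau,n)$, \cref{lem:product-rows} gives two facts. First, $\rowprod(\tau,n)_K(\hprod(\tau))=n<\infty$, so the support is nonempty. Second, the $\Ichain$-coordinate is $\frac{n}{n+1}r$ with $r\in\mathbb Q\cap(0,1)$, because $q=(b,r)\in X_{\mathbb Q}$; this number is rational and lies strictly between $0$ and $r<1$. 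Only the private coordinate is needed here; the inductive hypothesis on $q$ is not used beyond $q\in X_{\mathbb Q}$.

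Then \eqref{eq:P-product-no-top-profile} is immediate, since $\supp(\top_K)=\varnothing$.

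The last assertion is the only one that needs a small argument. Suppose $g=(g_K,g_{\Ichain})$ were the greatest element of $\Pprod$. By \eqref{eq:P-product-basic}, $g_K$ has a nonempty support; pick $i\in\supp(g_K)$, so $g_K(i)<\infty$. Put $n=g_K(i)+1$ and choose any $f\in K$ with $f(i)=n$, for example $f=z^{\times}_{i,n}$. Then $(f,\beta_i(n))\in\Pstageprod{0}\subseteq\Pprod$. But $f(i)=n>g_K(i)$, so this point is not below $g$, a contradiction.

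The second coordinate gives an alternative route: $\beta_i(n)\to1$ along $\theta_i\to1$, and every $p_{\Ichain}<1$. The coordinate argument above is cleaner, so I would use it.

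I expect no real obstacle. The only care needed is to make sure the induction uses just the defining data of $\Tprod$, namely that $q\in X_{\mathbb Q}$ and that $\hprod$ satisfies \eqref{eq:h-product}, together with \cref{lem:product-rows}.
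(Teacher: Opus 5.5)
Your proof is correct and follows the paper's argument. Countability and the basic properties (rational height in \((0,1)\), nonempty support) come from an essentially trivial induction over the stages, and the absence of a greatest element is shown by exhibiting a marker point not below a given candidate. The only difference is cosmetic. The paper uses the marker \((z^{\times}_{r,1},\beta_r(1))\) at a coordinate \(r\notin\supp(p_K)\), so that it equals \(\infty\) at a support coordinate of \(p\). You instead raise the value at a coordinate inside \(\supp(g_K)\). Both use nonemptiness of the support in the same way.
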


\begin{proof}
Countability follows from countability of $\Pstageprod{0}$, $\Tprod$ and $\Npos$.
The marker points satisfy \eqref{eq:P-product-basic}; every new row has
rational height in $(0,1)$ and nonempty support by
\eqref{eq:row-private-coordinate}. Induction gives
\eqref{eq:P-product-basic} and \eqref{eq:P-product-no-top-profile}.

It remains to show that $\Pprod$ has no greatest element. Fix
$p\in \Pprod$. By \eqref{eq:P-product-basic}, the finite set
$\supp(p_K)$ is nonempty. Choose
$r\in\Npos\setminus\supp(p_K)$ and put
\(q=(z^{\times}_{r,1},\beta_r(1))\).
Then $q\in \Pstageprod{0}\subseteq \Pprod$. Choose any
$s\in\supp(p_K)$. Since $r\notin\supp(p_K)$, we have $s\neq r$, and hence
\(q_K(s)=\infty>p_K(s)\).
Thus $q\nleq p$. Since every $p\in \Pprod$ fails to dominate some
$q\in \Pprod$, the poset $\Pprod$ has no greatest element.
\end{proof}

\begin{theorem}\label{thm:product-P-dcpo}
 $\Pprod$ is a sub-dcpo of $X$.
\end{theorem}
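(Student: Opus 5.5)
The approach is to reduce to increasing sequences and then use the private coordinates to pin the sequence down to a single row or to a single family of marker points. Since $\Pprod$ is countable by \cref{prop:product-P-basic}, every directed subset $E\subseteq\Pprod$ has an increasing cofinal sequence $(e_j)_{j\in\N}$, by the remark in \cref{sec:preliminaries}. Its supremum in $X$ is $\bigvee E$. If the sequence is eventually constant there is nothing to prove, so I will assume it is not. I will fix, once and for all, a \emph{label} for each point of $\Pprod$. A label is either ``marker $(i,n)$'' when the point lies in $\Pstageprod{0}$ with $f(i)=n$, or ``row $(\tau,n)$'' when the point equals $\rowprod(\tau,n)$ for some $\tau=(a,H,q)$ with $q\in\Pprod$. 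Any choice of label will do.

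The first step is a finiteness argument. The $K$-components $(e_j)_K$ increase coordinatewise, so the supports $\supp((e_j)_K)$ form a decreasing sequence of finite sets. By \eqref{eq:P-product-basic} they are nonempty, so they stabilize to a nonempty finite set $S$ for $j\ge j_0$. For a tail element labelled row $(\tau,n)$, \eqref{eq:row-private-coordinate} gives $\hprod(\tau)\in S$. Because $\hprod$ is injective, only finitely many $\tau$ occur in the tail. Similarly, for a tail element labelled marker $(i,n)$ we have $f(i)=n<\infty$, so $i\in S$. Hence only finitely many marker indices $i$ occur in the tail.

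Next comes a case split by pigeonhole. In \emph{Case 1}, some single $\tau=(a,H,q)$ occurs as a label at infinitely many indices, with unbounded row index $n$. The corresponding elements form a subsequence that is cofinal in the chain. Since rows are strictly increasing in $n$ (\cref{lem:product-rows}), this subsequence is cofinal in $(\rowprod(\tau,n))_n$. Therefore $\bigvee E=\bigvee_n\rowprod(\tau,n)=q$ by \eqref{eq:row-product-sup}, and $q\in\Pprod$ by the definition of $\tau$'s use in \eqref{eq:P-product}. In \emph{Case 2}, every $\tau$ occurring in the tail has bounded row index. Then the tail contains only finitely many row-labelled points, since there are finitely many $\tau$ each with finitely many $n$. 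Hence a further tail consists of markers, and by pigeonhole infinitely many of them share one index $i\in S$. Along this cofinal subsequence, $n=f(i)$ is nondecreasing because the $K$-parts increase. On the other hand, the heights $\beta_i(n)$ must be nondecreasing, while $\beta_i$ is strictly decreasing in $n$. So $n$ is constant and the height is constant equal to $\beta_i(n)$. The supremum is then $(g,\beta_i(n))$, where $g=\bigvee f_j$ is computed coordinatewise by \eqref{eq:K-sup}, and $g(i)=n$. This point is again a marker, hence lies in $\Pstageprod{0}\subseteq\Pprod$.

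I expect the main obstacle to be the bookkeeping in the first step. One has to make sure that the private-coordinate argument really bounds the set of labels in the tail. This needs the supports to be stabilizing, not merely decreasing, and it needs the fixed labelling so that a point arising in several ways is counted once. One also has to check, in Case 1, that the non-row elements interleaved in the chain cannot raise the supremum above $q$. This holds because in a chain every element lies below some later element of the cofinal subsequence.
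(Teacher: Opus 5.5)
Your proof is correct and follows essentially the same route as the paper. Both arguments reduce to increasing sequences by countability, use finite support to bound the marker indices and private coordinates that can occur, and use injectivity of $\hprod$ to isolate a single row family, whose unbounded levels give the retained endpoint $q$; both use the opposite monotonicities of $n$ and $\beta_i(n)$ to freeze a marker family. The only difference is cosmetic: you eliminate bounded row families by assuming the chain is not eventually constant, so that each point occurs at only finitely many indices, whereas the paper observes that the supremum is then a retained row.
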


\begin{proof}
Give each $p\in\Pprod$ a fixed certificate, using its least construction stage
\[
  \operatorname{rk}_{\times}(p)=\min\{m:p\in\Pstageprod{m}\}.
\]
For rank zero, choose a marker representation $p=(f,\beta_i(n))$ with $f(i)=n$ and use $(0,i)$. Otherwise choose $p=\rowprod(\tau,n)$ from its first stage, write $\tau=(a,H,q_\tau)$, and use $(1,\hprod(\tau))$. The endpoint $q_\tau$ belongs to the preceding stage. In both cases the certificate coordinate belongs to $\supp(p_K)$. For a row certificate, injectivity of $\hprod$ determines $\tau$, and the value at that coordinate determines $n$. A private coordinate may occur in other supports; only its use as a row certificate must identify a unique row family.

Let $p_0\leq p_1\leq\cdots$ be increasing in $\Pprod$ and let $p$ be
its supremum in $X$. Since supports decrease along increasing sequences,
\[
  \supp((p_j)_K)\subseteq\supp((p_0)_K)
  \qquad(j\in\N).
\]
Only finitely many certificate types and coordinates can therefore occur.
Choose an infinite subsequence with one fixed certificate. Its index set is
unbounded, so the subsequence is cofinal.

In the marker case, the subsequence has the form
$(f_j,\beta_i(n_j))$ with $f_j(i)=n_j$. Monotonicity of the first
coordinates makes $(n_j)$ nondecreasing, while monotonicity of the second
coordinates and strict decrease of $\beta_i$ force $(n_j)$ to be constant,
say $n_j=n$. The cofinal subsequence therefore has constant second
coordinate $\beta_i(n)$ and constant $i$th first coordinate $n$. Its ambient
supremum has the same two coordinates and hence is again a marker in $\Pstageprod{0}$.

In the row case, injectivity of $\hprod$ forces all points in the cofinal
subsequence to come from one triple $\tau$, so they are
$\rowprod(\tau,n_j)$. The private coordinate makes $(n_j)$ nondecreasing. If it
is bounded, it is eventually constant and the supremum is a retained row.
If it is unbounded, \cref{lem:product-rows} gives supremum $q_\tau$, which
belongs to an earlier construction stage. Hence every increasing sequence
in $\Pprod$ has its ambient supremum in $\Pprod$.

Finally, $\Pprod$ is countable, so every directed subset has an
increasing cofinal sequence. Its ambient supremum therefore belongs to
$\Pprod$ and is also its supremum in $\Pprod$.
\end{proof}

\subsection{Preservation of the Scott-closed-set lattice}\label{subsec:product-trace}

It remains to verify the two conditions from \cref{thm:closure-characterization}. The marker set already yields Skula-density; the private rows are designed precisely to give directed lifting and hence closure compatibility.

\begin{lemma}\label{lem:product-Skula-density}
For every $x\in X$,
\begin{equation}\label{eq:product-TG}
  \cl_{\sigma(X)}(\Pstageprod{0}\cap\down_Xx)=\down_Xx.
\end{equation}
In particular, $\Pprod\subseteq X$ satisfies Skula-density condition
\textup{(SD)}.
\end{lemma}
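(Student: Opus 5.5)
The plan is to fix $x=(g,t)\in X$, with $g\in K$ and $t\in\Ichain=(0,1]$, and to put $C=\cl_{\sigma(X)}(\Pstageprod{0}\cap\down_Xx)$. Since $C\subseteq\down_Xx$ automatically, it suffices to show $x\in C$; then $C$, being a lower set, equals $\down_Xx$. I will use only two properties of $C$: it is a lower set, and it is closed under suprema of increasing sequences.

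\emph{Step 1: reaching $(g,r)$ for rational $r<t$.} Fix a rational $r\in(0,t)$. The sequence $(\theta_i)$ takes the value $r$ infinitely often, while $\supp(g)$ is finite. So I can choose $i\in\Npos\setminus\supp(g)$ with $\theta_i=r$; in particular $g(i)=\infty$. For $n\in\Npos$, let $g_n$ agree with $g$ except that $g_n(i)=n$. Then $g_n\in K$, because $\supp(g_n)=\supp(g)\cup\{i\}$ is finite, and $g_n\leq g$. Moreover $\beta_i(n)\downarrow\theta_i=r<t$, so $\beta_i(n)\le t$ for all $n\geq n_0$. For such $n$, the marker $(g_n,\beta_i(n))$ lies in $\Pstageprod{0}\cap\down_Xx$, since $g_n(i)=n$. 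As $r<\beta_i(n)$ and $C$ is lower, $(g_n,r)\in C$ for $n\ge n_0$. These points form an increasing sequence whose supremum in $X$ is $(g,r)$: the coordinate $i$ increases to $\infty=g(i)$ and all other coordinates are constant. Hence $(g,r)\in C$.

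\emph{Step 2: reaching $x$.} Choose rationals $r_k\uparrow t$ in $(0,t)$, which is possible because $t>0$. By Step 1, each $(g,r_k)$ lies in $C$. These points increase to $(g,t)=x$, so $x\in C$. This proves \eqref{eq:product-TG}.

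\emph{Step 3: the consequence for $\Pprod$.} Since $\Pstageprod{0}\subseteq\Pprod$, we have
\[
  \down_Xx=\cl_{\sigma(X)}(\Pstageprod{0}\cap\down_Xx)\subseteq\cl_{\sigma(X)}(\Pprod\cap\down_Xx)\subseteq\down_Xx,
\]
so \eqref{eq:SD} holds for $\Pprod$.

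The only delicate point is Step 1. The second coordinate along the markers $(g_n,\beta_i(n))$ decreases, so these markers do not themselves form a directed set. This is handled by first lowering them to the common height $\theta_i$, which stays inside $C$ because $C$ is a lower set, and only then taking the supremum. It also requires choosing the marker coordinate $i$ outside $\supp(g)$, so that the points $g_n$ lie below $g$ and converge to it; this is exactly where the infinite repetition of each rational in $(\theta_i)$ is used.
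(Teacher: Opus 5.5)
Your proposal is correct and follows the paper's proof essentially step for step. Like the paper, you choose $i\notin\supp(g)$ with $\theta_i$ equal to a fixed rational height below $t$, lower the markers $(g_n,\beta_i(n))$ to that height, and take the supremum in $n$. You then take the supremum over rational heights increasing to $t$; the paper uses the directed set of all rationals in $(0,t)$ rather than a sequence, which makes no difference.
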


\begin{proof}
Write $x=(a,t)$ and let
$C=\cl_{\sigma(X)}(\Pstageprod{0}\cap\down_Xx)$. Clearly $C\subseteq\down_Xx$. Fix
$c\in\mathbb Q\cap(0,t)$. Since $\supp(a)$ is finite and the value $c$
occurs infinitely often in $(\theta_i)$, choose
$i\notin\supp(a)$ with $\theta_i=c$. Let $a^{i,n}$ agree with $a$ except
that $a^{i,n}(i)=n$. Then $(a^{i,n})_n$ is increasing with supremum $a$.

For all sufficiently large $n$, one has $\beta_i(n)\leq t$, and hence
\((a^{i,n},\beta_i(n))\in \Pstageprod{0}\cap\down_X(a,t)\subseteq C\).
Since $c<\beta_i(n)$ and $C$ is lower, $(a^{i,n},c)\in C$. Taking the
directed supremum over the tail in $n$ gives $(a,c)\in C$. Finally,
\((a,t)=\bigvee\{(a,c):c\in\mathbb Q,\ 0<c<t\}\),
so $x\in C$. Lowerness then gives $\down_Xx\subseteq C$.
\end{proof}

\begin{proposition}\label{prop:product-directed-lifting}
The inclusion $\Pprod\subseteq X$ has directed lifting.
\end{proposition}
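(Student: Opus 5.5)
The plan is to lift $p$ along one of the private rows built into $\Pprod$, choosing the triple $\tau$ from the directed set $E$ itself. Let $E\subseteq X$ be directed and $p=(b,r)\in\Pprod$ with $p\leq\bigvee E=:x=(a',t')$. Say $p\in\Pstageprod{m}$. By \cref{prop:product-P-basic}, $r\in\mathbb Q\cap(0,1)$, so $p\in X_{\mathbb Q}$. Since directed suprema in $X$ are computed componentwise, $a'=\bigvee_K\{e_K:e\in E\}$ is the coordinatewise supremum (\cref{prop:product-factor-order}), $t'=\bigvee\{e_{\Ichain}:e\in E\}$, and $b\leq a'$.

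Fix some $e_0\in E$ and put
\[
H=\supp(a')\cup\supp((e_0)_K).
\]
This set is finite and contains $\supp(a')$, so $\tau=(a',H,p)\in\Tprod$. Let $B=\{\rowprod(\tau,n):n\in\Npos\}$. Since $p\in\Pstageprod{m}$, the definition \eqref{eq:P-product} gives $B\subseteq\Pstageprod{m+1}\subseteq\Pprod$. By \cref{lem:product-rows}, $B$ is a chain, hence directed, with supremum $p$.

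It remains to show that each $\rowprod(\tau,n)$ lies below some member of $E$. Write $k=\hprod(\tau)$; by \eqref{eq:h-product}, $k\notin H$. The work is to control all coordinates at once, and I would split them as follows.

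\begin{enumerate}[leftmargin=2.2em]
\item \emph{Coordinates $i\notin H$.} Every $e\geq e_0$ in $E$ satisfies $\supp(e_K)\subseteq\supp((e_0)_K)\subseteq H$, so $e_K(i)=\infty$. This covers the private coordinate $k$ (where the row value is $n$) and every coordinate outside $H\cup\{k\}$ (where the row value is $b(i)$, possibly $\infty$).
\item \emph{Coordinates $i\in H$.} Here the row value is at most $\min\{a'(i),n\}$. If $a'(i)<\infty$, then $a'(i)$ is a positive integer and is the supremum of the values $e_K(i)$, so it is attained by some $e\in E$. If $a'(i)=\infty$, then some $e\in E$ has $e_K(i)\geq n$.
\item \emph{The second coordinate.} We have $\frac{n}{n+1}r<r\leq t'$, so some $e\in E$ satisfies $e_{\Ichain}\geq\frac{n}{n+1}r$.
\end{enumerate}

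Since only finitely many requirements are involved for a fixed $n$, directedness of $E$ yields a single $e\geq e_0$ in $E$ meeting all of them, and then $\rowprod(\tau,n)\leq e$.

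The main obstacle I expect is the coordinates outside $\supp(a')$ on which members of $E$ are still finite. There the supremum $a'$ is $\infty$, but no single $e$ reaches $b(i)$ when $b(i)=\infty$. The choice of $H$ is meant to handle exactly this. Enlarging $H$ by $\supp((e_0)_K)$ makes $u^{\times}_{a',H,n}$ cap those coordinates at $n$, so only finite values need to be matched. Outside $H$, the coordinates are automatically $\infty$ above $e_0$.
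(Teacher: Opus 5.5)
Your proof is correct and follows the paper's argument. The paper likewise fixes a point $(f_0,s_0)$ of the directed set, takes $H=\supp(f_0)$, and lifts along the row family of $\tau=(a,H,q)$, meeting the finitely many requirements on $H$ and on the height by directedness above that point. The union with $\supp(a')$ in your $H$ is redundant, since $e_0\leq\bigvee E$ already forces $\supp(a')\subseteq\supp((e_0)_K)$, so your $H$ is exactly the paper's.
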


\begin{proof}
Let $A\subseteq X$ be directed, write $\bigvee_XA=(a,t)$, and let
$q=(b,r)\in \Pprod$ satisfy $q\leq(a,t)$. By
\cref{prop:product-P-basic}, $r\in\mathbb Q\cap(0,1)$.

Choose $(f_0,s_0)\in A$, put $H=\supp(f_0)$, and let
$A'=A\cap\up_X(f_0,s_0)$. Then $A'$ is directed and cofinal,
$\supp(a)\subseteq H$, and every first coordinate of a point of $A'$ is
$\infty$ outside $H$. Hence $\tau=(a,H,q)\in\Tprod$.

For each $n\in\Npos$ and $i\in H$, the finite element
$\min\{a(i),n\}$ is compact in $\overline{\Npos}$ and lies below the
supremum of the $i$th coordinates of $A'$. Together with
$\frac{n}{n+1}r<t$, these finitely many requirements and directedness yield
$y_n=(f_n,s_n)\in A'$ such that
\[
  u^{\times}_{a,H,n}\leq f_n,
  \qquad
  \frac{n}{n+1}r\leq s_n.
\]
Put $q_n=\rowprod(\tau,n)$. By \eqref{eq:P-product}, each $q_n$ belongs to
$\Pprod$, and \cref{lem:product-rows} shows that $(q_n)_n$ is increasing with supremum $q$. Moreover,
\eqref{eq:row-product} yields $q_n\leq y_n$. Thus
$\{q_n:n\in\Npos\}$ witnesses directed lifting.
\end{proof}

\begin{theorem}\label{thm:product-Gamma-isomorphism}
The restriction map is an order isomorphism
\begin{equation}\label{eq:product-Gamma-iso}
  \rho:\Gamma(X)\longrightarrow\Gamma(\Pprod),
  \qquad H\longmapsto H\cap \Pprod.
\end{equation}
Its inverse is $C\mapsto\cl_{\sigma(X)}(C)$.
\end{theorem}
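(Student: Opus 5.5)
This follows by assembling the results already established for $\Pprod\subseteq X$. By \cref{thm:product-P-dcpo}, $\Pprod$ is a sub-dcpo of $X$, so \cref{thm:closure-characterization} applies to the inclusion $\Pprod\hookrightarrow X$. That theorem says the restriction $\rho\colon H\mapsto H\cap\Pprod$ is an order isomorphism $\Gamma(X)\to\Gamma(\Pprod)$ exactly when \eqref{eq:SD} and \eqref{eq:SC} hold. It also says that, in that case, the inverse is $C\mapsto\cl_{\sigma(X)}(C)$. So the plan is to verify these two conditions.

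For \eqref{eq:SD}, I would use \cref{lem:product-Skula-density}. It gives $\cl_{\sigma(X)}(\Pstageprod{0}\cap\down_Xx)=\down_Xx$ for every $x\in X$. Since $\Pstageprod{0}\subseteq\Pprod\cap\down_Xx$ after intersecting with $\down_Xx$, monotonicity of closure yields
\[
\down_Xx=\cl_{\sigma(X)}(\Pstageprod{0}\cap\down_Xx)\subseteq\cl_{\sigma(X)}(\Pprod\cap\down_Xx)\subseteq\down_Xx .
\]
The last inclusion holds because $\down_Xx$ is Scott closed. This is \eqref{eq:SD}.

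For \eqref{eq:SC}, I would invoke \cref{prop:product-directed-lifting}, which gives directed lifting for the inclusion. \Cref{prop:directed-lifting-bridge-density} then shows that directed lifting implies \eqref{eq:SC}.

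With both conditions in hand, \cref{thm:closure-characterization} gives the isomorphism together with its stated inverse. No real obstacle remains at this point: the difficult steps, namely closure of $\Pprod$ under directed suprema and the directed-lifting construction via private rows, were handled in the preceding results. The only subtle point is that \eqref{eq:SD} is proved for the smaller set $\Pstageprod{0}$, and the monotonicity argument above transfers it to $\Pprod$.
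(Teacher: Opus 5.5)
Your proposal is correct and is essentially the paper's proof. \cref{lem:product-Skula-density} gives \eqref{eq:SD}, \cref{prop:product-directed-lifting} together with \cref{prop:directed-lifting-bridge-density} gives \eqref{eq:SC}, and \cref{thm:closure-characterization} (applicable because $\Pprod$ is a sub-dcpo by \cref{thm:product-P-dcpo}) then yields the isomorphism with inverse $C\mapsto\cl_{\sigma(X)}(C)$; your explicit monotonicity step transferring \eqref{eq:SD} from $\Pstageprod{0}$ to $\Pprod$ is exactly what the ``in particular'' clause of that lemma uses tacitly.
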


\begin{proof}
\cref{lem:product-Skula-density} gives \eqref{eq:SD}. By
\cref{prop:product-directed-lifting,prop:directed-lifting-bridge-density},
directed lifting gives \eqref{eq:SC}. Apply \cref{thm:closure-characterization}.
\end{proof}

\subsection{The counterexample}\label{subsec:product-main}

\begin{proposition}\label{prop:product-sober}
The Scott topology of $X=K\times\Ichain$ is the product topology of
$\Sigma K$ and $\Sigma\Ichain$. Consequently, $\Sigma X$ is sober.
\end{proposition}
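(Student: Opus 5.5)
The plan is to use the classical fact that $\sigma(D\times E)=\sigma(D)\times\sigma(E)$ whenever one factor is a continuous dcpo. Here $\Ichain=(0,1]$ is a continuous chain in which $s\ll t$ holds exactly when $s<t$. This fact can be taken from \cite{gierz2003} (the product of Scott topologies agrees with the Scott topology of the product when one factor is locally compact, i.e.\ core-compact). To keep the argument self-contained, I will also verify it directly.

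\textbf{Step 1: product opens are Scott open.} Products of Scott-open sets are Scott open in $X$, because directed suprema in $X$ are computed coordinatewise. This is routine.

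\textbf{Step 2: Scott-open sets are product open.} Let $U\in\sigma(X)$ and $(a,t)\in U$. Since $(a,t)=\bigvee_{s<t}(a,s)$, some $s<t$ satisfies $(a,s)\in U$. Put
\[
  V=\{f\in K:(f,s)\in U\}.
\]
This set is Scott open in $K$, since it is the preimage of $U$ under the Scott-continuous map $f\mapsto(f,s)$, and it contains $a$. Then $V\times(s,1]\subseteq U$, because $U$ is an upper set and $(f,t')\geq(f,s)$ for $t'>s$. The set $(s,1]$ is Scott open in $\Ichain$. Hence $U$ is a union of open rectangles. The key point is the continuity of $\Ichain$, used through $t=\bigvee\{s:s\ll t\}$ and the Scott-openness of $\up s\setminus\{s\}=(s,1]$. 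This step is the only one with real content.

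\textbf{Step 3: sobriety.} \cref{prop:K-sober} shows that $\Sigma K$ is sober. $\Sigma\Ichain$ is sober because $\Ichain$ is continuous, hence quasicontinuous, and so sober by \cite[Proposition~III-3.7]{gierz2003}. Products of sober spaces are sober; this is a standard result, see \cite[Chapter~8]{goubault2013}. Since by Steps 1 and 2 the space $\Sigma X$ is the topological product $\Sigma K\times\Sigma\Ichain$, it follows that $\Sigma X$ is sober.

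I expect no genuine obstacle. The one point that needs care is Step 2: the argument must use only $\Ichain$'s continuity and must not rely on any property of $K$. In particular, sobriety of $K$ is needed only in Step 3, not for the topological identification.
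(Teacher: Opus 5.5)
Your proposal is correct and follows essentially the same route as the paper: open rectangles are Scott open, and for a Scott-open $W\ni(a,t)$ you choose $0<s<t$ with $(a,s)\in W$ and use the rectangle $\{f\in K:(f,s)\in W\}\times(s,1]\subseteq W$. Sobriety of $\Sigma X$ then follows, as in the paper, from sobriety of $\Sigma K$ and $\Sigma\Ichain$ and the fact that products of sober spaces are sober.
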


\begin{proof}
Every basic open rectangle $U\times V$, with $U\in\sigma(K)$ and
$V\in\sigma(\Ichain)$, is Scott open in the product dcpo. Indeed, it is an upper
set. If $E\subseteq K\times\Ichain$ is directed and
$\bigvee E\in U\times V$, Scott openness of $U$ and $V$ gives
$e_1,e_2\in E$ whose first and second coordinates lie in $U$ and $V$,
respectively. Directedness yields $e\in E$ with $e_1,e_2\leq e$, and then
$e\in U\times V$. Hence the product topology is contained in $\sigma(X)$.

For the reverse inclusion, let $W\in\sigma(X)$ and $(a,t)\in W$. Since
\((a,t)=\bigvee\{(a,s):0<s<t\}\),
Scott openness gives $0<s<t$ with $(a,s)\in W$. For this fixed $s$, the map
\(\iota_s:K\longrightarrow K\times\Ichain,\qquad b\longmapsto(b,s)\),
preserves directed suprema and is therefore Scott continuous. Consequently
its inverse image
\(W_s=\iota_s^{-1}(W)=\{b\in K:(b,s)\in W\}\)
is Scott open in $K$. The interval $(s,1]$ is Scott open in the chain
$\Ichain$. Since $W$ is an upper set,
\((a,t)\in W_s\times(s,1]\subseteq W\).
Thus every Scott-open neighbourhood contains a product-open neighbourhood,
and the two topologies agree. Both $\Sigma K$ and $\Sigma\Ichain$ are sober: the former by \cref{prop:K-sober}, the latter because $\Ichain$ is continuous. Topological products of sober spaces are sober \cite{goubault2013}, so $\Sigma X$ is sober.
\end{proof}

\begin{theorem}\label{thm:product-counterexample}
There exists a countable sober $C_{\sigma}$-unique dcpo $K$ such that,
for the continuous chain $\Ichain=(0,1]$, the product $K\times\Ichain$ is sober but
not $C_{\sigma}$-unique. More precisely, $K\times\Ichain$ has a proper
countable $2$-equable sub-dcpo $\Pprod$ with
\begin{equation}\label{eq:product-counterexample}
  \Gamma(K\times\Ichain)\cong\Gamma(\Pprod),
  \qquad
  K\times\Ichain\not\cong \Pprod.
\end{equation}
Thus the class of $C_{\sigma}$-unique dcpos is not closed under binary
products.
\end{theorem}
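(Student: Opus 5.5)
The proof will assemble the ingredients already established in this section. Take $K$ from \eqref{eq:product-factor}. \cref{prop:product-factor-order} shows it is countable, \cref{prop:K-sober} shows it is sober, and \cref{thm:K-unique} shows it is $C_{\sigma}$-unique. For $X=K\times\Ichain$, sobriety of $\Sigma X$ is \cref{prop:product-sober}. Next, $\Pprod$ is a countable sub-dcpo of $X$ by \cref{prop:product-P-basic,thm:product-P-dcpo}. \cref{thm:product-Gamma-isomorphism} shows that restriction $\Gamma(X)\to\Gamma(\Pprod)$ is an order isomorphism, so $\Pprod$ is $2$-equable and $\Gamma(K\times\Ichain)\cong\Gamma(\Pprod)$. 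Properness is immediate from \eqref{eq:P-product-no-top-profile}: for example, $(\top_K,1)\notin\Pprod$.

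It remains to show $K\times\Ichain\not\cong\Pprod$, and this is the only point needing an argument beyond citation. I would use the greatest element. The product $X$ has greatest element $(\top_K,1)$, whereas \cref{prop:product-P-basic} states that $\Pprod$ has no greatest element. Having a top is preserved by order isomorphisms, so the two dcpos are not isomorphic. An alternative route is \cref{cor:sober-equable-completions}: since $X$ is sober and $\Pprod\subsetneq X$ is $2$-equable, $\Sigma\Pprod$ is not sober. A Scott-space isomorphism would transfer sobriety, so again $X\not\cong\Pprod$. I would present the greatest-element argument as the primary one because it is elementary, and mention the corollary as a remark.

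Finally, $C_{\sigma}$-uniqueness of $X$ fails by definition, since $Q=\Pprod$ witnesses $\Gamma(X)\cong\Gamma(Q)$ with $X\not\cong Q$. Because $\Ichain$ is continuous, it is $C_{\sigma}$-unique, as already recorded at the start of \cref{subsec:product-rows}. Thus both factors are $C_{\sigma}$-unique while their product is not, which gives the non-closure under binary products. The substantive difficulties, namely directed-supremum closure of $\Pprod$ and directed lifting, were handled in \cref{thm:product-P-dcpo} and \cref{prop:product-directed-lifting}. I therefore expect no real obstacle here beyond checking that each cited hypothesis matches.
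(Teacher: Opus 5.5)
Your proposal is correct and follows the paper's proof essentially verbatim. It assembles the same cited results for countability, sobriety and $C_{\sigma}$-uniqueness of $K$, sobriety of $K\times\Ichain$, and $2$-equability of $\Pprod$, and it proves $K\times\Ichain\not\cong\Pprod$ exactly as the paper does, by comparing the greatest element $(\top_K,1)$ with the absence of one in $\Pprod$. Your alternative remark via \cref{cor:sober-equable-completions} is also valid, though not needed.
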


\begin{proof}
The dcpo $K$ is countable, sober and $C_{\sigma}$-unique by
\cref{prop:product-factor-order,prop:K-sober,thm:K-unique}, while
$\Ichain$ is continuous and $C_{\sigma}$-unique. The product is sober by \cref{prop:product-sober}. By
\cref{prop:product-P-basic,thm:product-P-dcpo,thm:product-Gamma-isomorphism},
$\Pprod$ is a proper countable $2$-equable sub-dcpo of
$X=K\times\Ichain$ and has no greatest element. Since $X$ has greatest
element $(\top_K,1)$, one has $\Pprod\not\cong X$, so
\eqref{eq:product-counterexample} shows that $X$ is not
$C_{\sigma}$-unique. 
\end{proof}

By \cref{cor:sober-equable-completions}, the inclusion $\Pprod\hookrightarrow X$ is the two-element reflection of $\Pprod$ and the sobrification map of $\Sigma\Pprod$. Hence
\[
  K\times\Ichain\cong L_2\Pprod\cong\widehat{\Pprod},
  \qquad \Pprod\notin\Rtwo,
\]
and $\Pprod$ is not sober. Only $K$ and $\Pprod$ are countable; $\Ichain$ and $X$ are uncountable.

\section{A sober countable frame that is not \texorpdfstring{$C_{\sigma}$}{C-sigma}-unique}\label{sec:counterexample}

Problems~5 and~6 of \cite{Zhao2022Survey} ask whether every complete lattice, or at least every sober countable complete lattice, is $C_{\sigma}$-unique. We answer both negatively by constructing a countable frame $F$ with a proper countable $2$-equable sub-dcpo $\Pframe$. The finite-support and private-row mechanism is the same as in Section~5. Here an antichain in the coordinate frame replaces the opposing monotonicities of marker coordinates and heights; \cref{tab:construction-comparison} summarizes the two implementations.

\subsection{The coordinate frame and marker system}\label{subsec:countable-frames}

A \emph{frame} is a complete lattice $F$ satisfying
\[
  x\wedge\bigvee_{i\in I}y_i
  =\bigvee_{i\in I}(x\wedge y_i)
\]
for all $x$ and all families $(y_i)_{i\in I}$. A quasi-order is a
\emph{well-quasi-order}, or wqo, if every infinite sequence
$x_0,x_1,\ldots$ has $i<j$ with $x_i\leq x_j$.

Every countable frame is sober in its Scott topology, since frames are meet-continuous complete lattices and \cite[Theorem~1.1]{XuJi2026} applies. Finitary distributivity alone does not suffice: a countable complete distributive lattice can have a non-sober Scott space \cite{MiaoXiLiZhao2023}.

Let
\[
  \RR=\{(m,n)\in\N^2:m<n\}
\]
and define
\begin{equation}\label{eq:rado}
  (m,n)\preceq(m',n')
  \quad\Longleftrightarrow\quad
  \bigl(m=m'\text{ and }n\leq n'\bigr)\text{ or }n<m'.
\end{equation}
This is the standard Rado order; it is a partial order and a wqo
\cite{Rado1954,Pequignot2017}.

Let
\[
  \Acoord=\Down(\RR)
\]
be the set of lower subsets of $\RR$, ordered by inclusion, with
$\Abot=\varnothing$ and $\Atop=\RR$.

\begin{proposition}\label{prop:coordinate-frame}
The poset $\Acoord$ is a countable algebraic frame. Arbitrary joins and meets are
set-theoretic unions and intersections.
\end{proposition}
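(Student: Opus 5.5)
The plan is to verify four things in turn: that $\Acoord$ is a complete lattice with set-theoretic joins and meets, that it is a frame, that it is algebraic, and that it is countable. The first three are general facts about the lattice of lower sets of any poset. Countability is the only step that uses a specific property of the Rado order, namely that it is a wqo.

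\emph{Complete lattice and frame.} Arbitrary unions and intersections of lower subsets of $\RR$ are again lower, and $\varnothing$ and $\RR$ are lower. Hence $\Acoord$ is a complete sublattice of the powerset $\mathcal P(\RR)$, with joins given by unions and meets by intersections. The frame law is then inherited from the powerset:
\[
  x\cap\bigcup_i y_i=\bigcup_i(x\cap y_i).
\]

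\emph{Algebraicity.} For $r\in\RR$, the principal ideal $\down r$ (taken in $\RR$) is compact in $\Acoord$. Indeed, directed joins are unions, so if $\down r\subseteq\bigcup_j A_j$, then $r\in A_j$ for some $j$, and lowerness gives $\down r\subseteq A_j$. It follows that finite unions of principal ideals are compact. Every lower set $A$ is the directed union of the finite unions $\down F$ with $F\subseteq A$ finite. So every element of $\Acoord$ is a directed join of compact elements, and $\Acoord$ is algebraic.

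\emph{Countability.} This is the main point. I will use the standard characterization that a quasi-order is a wqo if and only if every upper set is finitely generated, equivalently every lower set has the form $\RR\setminus\up F$ for a finite $F\subseteq\RR$. Given a lower set $A$, its complement $U=\RR\setminus A$ is an upper set. Let $M$ be the set of minimal elements of $U$.
\begin{enumerate}
\item Well-foundedness of the wqo gives $U=\up M$.
\item $M$ is an antichain, so it is finite, since a wqo has no infinite antichains.
\end{enumerate}
Thus $A\mapsto M$ is an injection of $\Acoord$ into the finite subsets of the countable set $\RR$, and so $\Acoord$ is countable.

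The only nontrivial input is that the Rado order is a wqo, which the paper quotes from \cite{Rado1954,Pequignot2017}. If a self-contained argument were wanted, this would be the obstacle. I would argue directly: in an infinite sequence $(m_k,n_k)$, either the first coordinates $m_k$ are bounded, in which case some $m$ repeats infinitely often and the corresponding $n$'s give a comparable pair, or they are unbounded, in which case some later $m_j$ exceeds $n_0$, giving $(m_0,n_0)\preceq(m_j,n_j)$.
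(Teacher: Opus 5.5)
Your proof is correct and follows essentially the same route as the paper. Lower sets form a complete sublattice of the powerset, which gives the frame law; finitely generated lower sets are compact and every lower set is their directed union; and countability comes from writing each lower set as $\RR\setminus\up F$, where $F$ is the finite antichain of minimal elements of its complement. Your self-contained argument that the Rado order is a wqo is also correct and supplies the one fact the paper only cites: bounded first coordinates force a repeated $m$ whose second coordinates give a comparable pair, and unbounded first coordinates give some $m_j>n_0$.
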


\begin{proof}
Lower sets are closed under arbitrary unions and intersections, and
intersection distributes over arbitrary union. Hence $\Acoord$ is a frame.

Let $U$ be an upper set of a wqo. Its minimal elements form an antichain and
are therefore finite. Every $u\in U$ lies above a minimal member of $U$:
otherwise one can recursively choose
$u=u_0>u_1>u_2>\cdots$ inside $U$, contradicting well-foundedness. Hence
$U$ is generated by its finite set of minimal elements. It follows that every
lower set has the form $\RR\setminus\up F$ for some finite
$F\subseteq\RR$. Since $\RR$ is countable, only countably many such $F$
occur, and $\Acoord$ is countable.

Every lower set $U$ is the directed union of $\down_{\RR}F$, where $F$
ranges over the finite subsets of $U$. Such a finitely generated lower set
is compact: if $\down_{\RR}F$ is contained in a directed union of lower
sets, finitely many members cover $F$, and one common upper member of the
directed family contains all of $\down_{\RR}F$. Hence $\Acoord$ is algebraic.
\end{proof}

For $n,m\in\N$, define
\begin{equation}\label{eq:tn-Dm}
  t_n=\{(i,j)\in\RR:j\leq n+1\},\qquad
  D_m=\down_{\RR}\{(m,\ell):m<\ell\},\qquad
  e_n=D_{n+2}.
\end{equation}
\begin{lemma}\label{lem:markers}
The following hold.
\begin{enumerate}[label=\textup{(\roman*)},leftmargin=2.2em]
\item Each $t_n$ is finite and lower,
      $t_0\subsetneq t_1\subsetneq\cdots$, and
      $\bigcup_nt_n=\Atop$.
\item
      \(
        D_m=\{(i,j)\in\RR:j<m\}\cup\{(m,j):m<j\}.
      \)
\item $t_n\subseteq e_n\subsetneq\Atop$ for every $n$.
\item $(e_n)_{n<\omega}$ is an antichain in $\Acoord$.
\end{enumerate}
\end{lemma}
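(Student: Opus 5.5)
The plan is to unwind the definitions of the Rado order \eqref{eq:rado} and check each item by direct computation; nothing beyond elementary order reasoning is needed. Note that $(i,j)\preceq(m',n')$ forces $j\le n'$: either $j\le n'$ directly, or $j<m'<n'$.

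For (i), the set $t_n$ is contained in $\{(i,j):i<j\le n+1\}$ and is therefore finite. It is lower by the observation above: if $(i,j)\preceq(i',j')$ with $j'\le n+1$, then $j\le n+1$. The chain $t_n$ is strictly increasing because $(0,n+2)\in t_{n+1}\setminus t_n$. Every $(i,j)\in\RR$ lies in $t_j$, so $\bigcup_n t_n=\Atop$.

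For (ii), an element $(i,j)$ lies in $D_m$ exactly when $(i,j)\preceq(m,\ell)$ for some $\ell>m$. This happens either when $i=m$ and $j\le\ell$, which for suitable $\ell$ is just the condition $i=m$, or when $j<m$. These two cases give the two pieces of the displayed union.

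For (iii), if $(i,j)\in t_n$ then $j\le n+1<n+2$, so $(i,j)\in D_{n+2}=e_n$ by (ii). Properness also follows from (ii): the element $(n+3,n+4)$ has $j=n+4\ge n+2$ and first coordinate $n+3\ne n+2$, so it does not lie in $e_n$.

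Item (iv) is the only one requiring a choice of witness, but it is still a two-line check. For $n\ne n'$, say $m=n+2<m'=n'+2$, I will exhibit an element of each set outside the other. The element $(m',m'+1)\in D_{m'}$ does not lie in $D_m$, because $m'+1\ge m$ and its first coordinate is not $m$. Conversely, $(m,m'+1)\in D_m$, since its first coordinate is $m$ and $m<m'+1$. It does not lie in $D_{m'}$, because its second coordinate $m'+1\ge m'$ and its first coordinate $m\ne m'$. Hence $e_n$ and $e_{n'}$ are incomparable, and the family $(e_n)$ is an antichain.

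I expect no real obstacle here. The main points to watch are the strict-versus-nonstrict inequalities in (ii) and (iv), and choosing a witness in (iv) that actually lies in $\RR$, which is why the second coordinate is taken to be $m'+1>m$.
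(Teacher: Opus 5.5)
Your proof is correct and follows essentially the same approach as the paper. Both unwind the Rado order directly, obtain the same description of $D_m$, use the same witness $(n+3,n+4)$ for properness in (iii), and separate the $D_m$ in (iv) with witnesses of the same kind; for $m<m'$ the paper uses $(m,m')$ where you use $(m,m'+1)$, and both work. You also fill in the details the paper calls immediate for (i), such as the fact that $\preceq$ never decreases the second coordinate.
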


\begin{proof}
The assertions about $t_n$ are immediate from \eqref{eq:rado}. An element
$(i,j)$ lies below some $(m,\ell)$ with $m<\ell$ exactly when $j<m$ or
$i=m$, proving the formula for $D_m$. If $(i,j)\in t_n$, then
$j\leq n+1<n+2$, so $(i,j)\in D_{n+2}=e_n$. Moreover, $e_n\subsetneq\Atop$, since $(n+3,n+4)\notin e_n$.

For $a<b$,
\[
  (a,b)\in D_a\setminus D_b,
  \qquad
  (b,b+1)\in D_b\setminus D_a.
\]
Thus the $D_m$, and hence the shifted family $(e_n)$, are pairwise
incomparable.
\end{proof}

The antichain $(e_n)$ lies in $\Acoord=\Down(\RR)$, not in $\RR$ itself, so its existence does not contradict the wqo property of the Rado order.

\subsection{The countable profile frame}\label{subsec:profile-frame}

Fix a countably infinite coordinate set $S$. Put
\[
  F_0=\left\{f:S\to\Acoord:
  \supp(f)=\{s:f(s)\neq\Atop\}\text{ is finite}\right\}.
\]
Order $F_0$ coordinatewise by inclusion. It has no least element: for any profile $f$, choose $s\notin\supp(f)$ and replace its value at $s$ by $\Abot$, obtaining a strictly smaller profile. Adjoin a new element $\Fbot$ below all profiles and put
\[
  F=\{\Fbot\}\cup F_0.
\]
The constant profile with value $\Atop$ is denoted by $\Ftop$. The new bottom $\Fbot$ is not a profile; in particular, it is distinct from every profile having one or more coordinates equal to $\Abot$.

\begin{theorem}\label{thm:F-frame}
The poset $F$ is a countable frame. For every nonempty family
$\mathcal B\subseteq F_0$,
\begin{equation}\label{eq:profile-join}
  \left(\bigvee\mathcal B\right)(s)
  =\bigcup_{f\in\mathcal B}f(s).
\end{equation}
If $\mathcal B\subseteq F_0$ and
$U=\bigcup_{f\in\mathcal B}\supp(f)$ is finite, then
\begin{equation}\label{eq:profile-meet}
  \left(\bigwedge\mathcal B\right)(s)
  =\bigcap_{f\in\mathcal B}f(s),
\end{equation}
where the empty intersection is $\Atop$; if $U$ is infinite, then
$\bigwedge\mathcal B=\Fbot$. The join of the empty family is $\Fbot$,
bottom terms may be discarded from a nonempty join, and any meet containing
$\Fbot$ is $\Fbot$.
\end{theorem}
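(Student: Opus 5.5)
The plan is to verify the statements in the order countability, joins, meets, and finally the frame law, using \cref{prop:coordinate-frame} coordinatewise.

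\emph{Countability.} A profile in $F_0$ is determined by its finite support $\supp(f)\subseteq S$ together with the values $f(s)\in\Acoord$ for $s\in\supp(f)$. Since $S$ is countable and $\Acoord$ is countable by \cref{prop:coordinate-frame}, there are only countably many such data. Hence $F_0$, and therefore $F=\{\Fbot\}\cup F_0$, is countable.

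\emph{Joins.} Let $\mathcal B\subseteq F_0$ be nonempty and define $g(s)=\bigcup_{f\in\mathcal B}f(s)$.
\begin{itemize}
\item Pick $f_0\in\mathcal B$. Since $g\geq f_0$ coordinatewise, we get $\supp(g)\subseteq\supp(f_0)$, so $g\in F_0$.
\item $g$ is an upper bound of $\mathcal B$.
\item Any upper bound of $\mathcal B$ is also a profile, because $\mathcal B\neq\varnothing$ and $\Fbot$ lies below every profile. A coordinatewise upper bound dominates the union in each coordinate, so it dominates $g$.
\end{itemize}
The empty join is $\Fbot$, and adding $\Fbot$ to a family does not change its upper bounds. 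So bottom terms may be discarded.

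\emph{Meets.}
\begin{itemize}
\item A meet containing $\Fbot$ is $\Fbot$, since $\Fbot$ is the least element.
\item The empty meet is $\Ftop$, which agrees with the convention that the empty intersection is $\Atop$.
\item Let $\mathcal B\subseteq F_0$ and $U=\bigcup_{f\in\mathcal B}\supp(f)$.

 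If $U$ is finite, the coordinatewise intersection $h$ has support contained in $U$, so $h\in F_0$. It is clearly the greatest lower bound among profiles. It also lies above $\Fbot$, so it is the meet in $F$.

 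If $U$ is infinite, suppose some profile $k$ is a lower bound. For every $s\in U$ we have $k(s)\subseteq f(s)\neq\Atop$ for some $f\in\mathcal B$, so $s\in\supp(k)$. Then $\supp(k)\supseteq U$ is infinite, which is impossible. Hence the only lower bound is $\Fbot$, and the meet is $\Fbot$.
\end{itemize}
This shows that $F$ is a complete lattice.

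\emph{Frame distributivity.} We must check $x\wedge\bigvee_i y_i=\bigvee_i(x\wedge y_i)$, and this is the main obstacle.
\begin{itemize}
\item If $x=\Fbot$, both sides are $\Fbot$.
\item After discarding bottom terms, if no $y_i$ remains, both sides are $\Fbot$.
\item So assume $x$ and all $y_i$ are profiles, with at least one $y_i$.
\end{itemize}
A binary meet of two profiles has finite union of supports, so $x\wedge y$ is always a profile computed coordinatewise by intersection. In particular, binary meets never produce $\Fbot$; $\Fbot$ arises only from infinite meets, which the frame law does not involve. Therefore both sides are profiles, computed coordinatewise as
\[
  x(s)\cap\bigcup_i y_i(s)
  \quad\text{and}\quad
  \bigcup_i\bigl(x(s)\cap y_i(s)\bigr),
\]
respectively. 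These agree by distributivity in the frame $\Acoord$ from \cref{prop:coordinate-frame}.

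The delicate point is bookkeeping the cases where $\Fbot$ appears, and confirming that binary meets of profiles stay in $F_0$. Once that is settled, the identity reduces to the coordinate frame as above.
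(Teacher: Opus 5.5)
Your proof is correct and follows essentially the same route as the paper: countability from finite supports over the countable sets $S$ and $\Acoord$, coordinatewise joins via a fixed $f_0\in\mathcal B$, coordinatewise meets when $U$ is finite, the support argument forcing $\Fbot$ when $U$ is infinite, and the frame law by discarding bottom terms and reducing to distributivity of intersection over union in $\Acoord$. Your version also spells out two points the paper leaves implicit: every upper bound of a nonempty family of profiles is itself a profile, and binary meets of profiles never collapse to $\Fbot$.
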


\begin{proof}
Countability follows because $S$ and $\Acoord$ are countable and every profile
has finite support. For \eqref{eq:profile-join}, fix
$f_0\in\mathcal B$. Outside $\supp(f_0)$ the coordinate union is $\Atop$, so
it again has finite support and is plainly the least upper bound. If $U$ is
finite, the coordinate intersection in \eqref{eq:profile-meet} is the greatest
lower bound in $F_0$. If $U$ is infinite, a nonbottom common lower bound would
have non-top value at every coordinate of $U$, contradicting finite support;
thus the meet is $\Fbot$.

It remains to verify the frame law. The empty family and the case in which
all $g_i$ are bottom are immediate. Otherwise discard the bottom terms.
Finite meets are then coordinatewise intersections, and for $f\in F_0$,
\[
  \left(f\wedge\bigvee_i g_i\right)(s)
  =f(s)\cap\bigcup_i g_i(s)
  =\bigcup_i(f(s)\cap g_i(s))
  =\left(\bigvee_i(f\wedge g_i)\right)(s).
\]
The case $f=\Fbot$ and the empty family are immediate.
\end{proof}

\begin{remark}\label{rem:profile-not-algebraic}
Although $\Acoord$ is algebraic, $F$ is not. For a nonbottom profile $f$, choose $r\notin\supp(f)$ and let $f_n$ agree with $f$ except that $f_n(r)=t_n$. Then $(f_n)_n$ is increasing, every $f_n<f$, and $\bigvee_n f_n=f$. Hence no nonbottom element of $F$ is compact.
\end{remark}

\subsection{The retained sub-dcpo}\label{subsec:private-rows}

We construct $\Pframe\subseteq F$ using finite-coordinate approximants and private rows, then prove that increasing sequences have their ambient suprema in $\Pframe$.

Let $a\in F_0$ and let $H\subseteq S$ be finite with
$\supp(a)\subseteq H$. Write $\tau=(a,H)$ and define
\begin{equation}\label{eq:canonical-u}
  u^{F}_{\tau,n}(s)=
  \begin{cases}
    a(s)\cap t_n,&s\in H,\\
    \Atop,&s\notin H.
  \end{cases}
\end{equation}
\begin{lemma}\label{lem:canonical-approximants}
The sequence $(u^{F}_{\tau,n})$ is increasing and has supremum $a$. Moreover,
if $(y_k)$ is an increasing sequence of nonbottom profiles with supremum
$a$, then its supports are eventually a fixed finite set $H$, and for every
$n$ some $k$ satisfies
\begin{equation}\label{eq:u-below}
  u^{F}_{(a,H),n}\leq y_k.
\end{equation}
\end{lemma}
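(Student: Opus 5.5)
The proof splits into two parts: the basic properties of $u^{F}_{\tau,n}$, and the approximation property \eqref{eq:u-below}, which reduces to compactness of $t_n$ in $\Acoord$.

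\emph{Monotonicity and supremum.} By \cref{lem:markers}(i) the sets $t_n$ increase, so for $s\in H$ the coordinate $a(s)\cap t_n$ increases in $n$; off $H$ it is constantly $\Atop$. Hence $u^{F}_{\tau,n}\leq u^{F}_{\tau,n+1}$. Each $u^{F}_{\tau,n}$ has support contained in $H$, so it lies in $F_0$. Joins in $F$ are computed coordinatewise by \eqref{eq:profile-join}. For $s\in H$ this gives
\[
  \bigcup_n\bigl(a(s)\cap t_n\bigr)=a(s)\cap\bigcup_nt_n=a(s),
\]
using $\bigcup_nt_n=\Atop$. For $s\notin H$ we have $s\notin\supp(a)$, so the coordinate of the join is $\Atop=a(s)$. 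Thus $\bigvee_nu^{F}_{\tau,n}=a$.

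\emph{Supports stabilize.} Let $(y_k)$ be increasing nonbottom profiles with supremum $a$. Because the order is coordinatewise inclusion, $y_k\leq y_{k+1}$ gives $\supp(y_{k+1})\subseteq\supp(y_k)$. A decreasing sequence of finite sets is eventually constant; call the eventual value $H$. Any such $H$ contains $\supp(a)$: if $s\notin H$, then eventually $y_k(s)=\Atop$, so $a(s)=\Atop$. This justifies forming $\tau=(a,H)$.

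\emph{Approximation.} Fix $n$ and consider $k$ beyond the stabilization point.
\begin{itemize}
\item For $s\notin H$ we have $y_k(s)=\Atop$, so the coordinate condition $u^{F}_{(a,H),n}(s)\leq y_k(s)$ holds trivially.
\item For $s\in H$, note that $a(s)\cap t_n$ is finite because $t_n$ is finite by \cref{lem:markers}(i). By \eqref{eq:profile-join}, $a(s)=\bigcup_ky_k(s)$, an increasing union. Each of the finitely many elements of $a(s)\cap t_n$ therefore lies in $y_k(s)$ for all large $k$.
\end{itemize}
Since $H$ is finite, a single $k$ works simultaneously for all $s\in H$ and is also beyond the stabilization point. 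For this $k$ the inequality \eqref{eq:u-below} holds.

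The only delicate point is the last step: it relies on the compactness, that is finiteness, of $t_n$ in $\Acoord$. This is why $t_n$ is used to truncate $a(s)$ instead of approximating $a(s)$ directly, since $a(s)$ itself need not be compact with respect to the $y_k(s)$.
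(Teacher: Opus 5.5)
Your proof is correct and follows essentially the same route as the paper. You compute the supremum coordinatewise using $\bigcup_n t_n=\Atop$, use that supports decrease and so stabilize at a finite $H$, and use finiteness of $a(s)\cap t_n$ to get one late $k$ that works on the finite set $H$; you also make explicit two points the paper leaves implicit, namely $\supp(a)\subseteq H$ and that $k$ must lie beyond the stabilization point so that $y_k(s)=\Atop$ off $H$.
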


\begin{proof}
The first assertion follows coordinatewise from $\bigcup_nt_n=\Atop$. Supports decrease along an increasing sequence and hence eventually stabilize at a finite set $H$. One has $\supp(a)\subseteq H$, but equality is not required: a coordinate may reach $\Atop$ only at the supremum. At each coordinate,
$a(s)=\bigcup_ky_k(s)$. For fixed $n$ and $s\in H$, the set
$u^{F}_{(a,H),n}(s)$ is finite and contained in $a(s)$, so it is contained in
some sufficiently late $y_k(s)$. One common $k$ works for all coordinates
in finite $H$; outside $H$ both profiles have value $\Atop$.
\end{proof}

Let $\Tframe$ be the countable set of all pairs $\tau=(a,H)$ above. For
every eligible pair $(\tau,q)$, where $\tau=(a,H)$, $q\in F_0$ and
$q\leq a$, choose injectively a private coordinate
\begin{equation}\label{eq:private-coordinate}
  \hframe(\tau,q)\in S\setminus(H\cup\supp(q)).
\end{equation}
Enumerate the eligible pairs. At each stage choose the new coordinate
outside $H\cup\supp(q)$ and outside the finite set of previously chosen
private coordinates. This makes $\hframe$ injective.

For $r\in S$, let $z^{F}_{r,n}$ be the profile with value $t_n$ at $r$ and
$\Atop$ elsewhere. Define
\begin{equation}\label{eq:row}
  \rowframe(\tau,q,n)=q\wedge u^{F}_{\tau,n}\wedge z^{F}_{\hframe(\tau,q),n}.
\end{equation}
\begin{lemma}\label{lem:rows}
For every eligible pair $(\tau,q)$, the rows increase to $q$:
\[
  \bigvee_n\rowframe(\tau,q,n)=q.
\]
At the private coordinate $k=\hframe(\tau,q)$,
$\rowframe(\tau,q,n)(k)=t_n$.
\end{lemma}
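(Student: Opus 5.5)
The argument is coordinatewise, modelled on \cref{lem:product-rows}. Fix an eligible pair $(\tau,q)$ with $\tau=(a,H)$, put $k=\hframe(\tau,q)$, and write $r_n=\rowframe(\tau,q,n)$. First we check that each $r_n$ is a nonbottom profile. The three factors $q$, $u^{F}_{\tau,n}$ and $z^{F}_{k,n}$ are profiles whose supports lie in the finite set $\supp(q)\cup H\cup\{k\}$. Hence \eqref{eq:profile-meet} applies: the meet is computed coordinatewise by intersection and has support inside that finite set. In particular $r_n\in F_0$.

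Next we compute the private coordinate. By \eqref{eq:private-coordinate}, $k\notin H\cup\supp(q)$, so $q(k)=\Atop$ and $u^{F}_{\tau,n}(k)=\Atop$. Therefore $r_n(k)=\Atop\cap\Atop\cap t_n=t_n$, which gives the second assertion.

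Monotonicity in $n$ follows from \cref{lem:markers}(i). The sets $t_n$ increase, so each $u^{F}_{\tau,n}$ and each $z^{F}_{k,n}$ is increasing in $n$ at every coordinate. Coordinatewise intersection is monotone, so $(r_n)$ is increasing. By \eqref{eq:profile-join}, the supremum is the coordinatewise union $\bigcup_n r_n(s)$. We evaluate this union by cases on $s$.
\begin{itemize}
\item For $s\in H$ (so $s\neq k$), we have $r_n(s)=q(s)\cap a(s)\cap t_n$. Since $q\leq a$, this equals $q(s)\cap t_n$.
\item For $s=k$, we have $r_n(k)=t_n$, and $q(k)=\Atop$.
\item For $s\notin H\cup\{k\}$, we have $r_n(s)=q(s)$ for every $n$.
\end{itemize}
Since $\bigcup_n t_n=\Atop$ and intersection with a fixed set distributes over directed unions, the union is $q(s)$ in the first case and $\Atop=q(k)$ in the second. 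It is trivially $q(s)$ in the third. Thus $\bigvee_n r_n=q$.

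The only step that needs care is making sure the meet formula \eqref{eq:profile-meet} applies, rather than the meet collapsing to $\Fbot$. This is settled by the finite-support bound above. Everything else is routine coordinatewise bookkeeping.
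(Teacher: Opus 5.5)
Your proposal is correct and follows essentially the same coordinatewise computation as the paper. The private coordinate gets value $t_n$ because $k\notin H\cup\supp(q)$, and at the remaining coordinates the union reduces to $q(s)\cap a(s)=q(s)$ using $q\leq a$ and $\bigcup_n t_n=\Atop$. Your extra finite-support check, ensuring that \eqref{eq:profile-meet} applies and the meet is a profile rather than $\Fbot$, simply makes explicit what the paper asserts in one line.
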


\begin{proof}
Put $k=\hframe(\tau,q)$. The meet in \eqref{eq:row} is coordinatewise intersection. At $k$, both $q$ and $u^{F}_{\tau,n}$ have value $\Atop$, so the row has value $t_n$. At every other coordinate,
\[
  \bigcup_n\rowframe(\tau,q,n)(s)
  =q(s)\cap\bigcup_nu^{F}_{\tau,n}(s)
  =q(s)\cap a(s)=q(s),
\]
using $q\leq a$. At the private coordinate the union of the $t_n$ is
$\Atop=q(k)$.
\end{proof}

Define
\begin{equation}\label{eq:P0}
  \Pstageframe{0}=\{\Fbot\}\cup
  \{p\in F_0:\text{for some }s\in S,i\in\N,\ p(s)=e_i\}.
\end{equation}
Inductively put
\begin{equation}\label{eq:Pm}
\begin{split}
  \Pstageframe{m+1}=\Pstageframe{m}\cup\{\rowframe(\tau,q,n):
  &\ \tau=(a_\tau,H_\tau)\in\Tframe,\\
  &\ q\in \Pstageframe{m}\cap F_0,\ q\leq a_\tau,\ n\in\N\},
\end{split}
\end{equation}
and let $\Pframe=\bigcup_m\Pstageframe{m}$. The set $\Pframe$ is countable.

As in Section~5, each nonbottom point will receive a marker or row certificate. Finite support reduces any increasing sequence to a cofinal subsequence with one fixed certificate type and coordinate.

\begin{theorem}\label{thm:P-dcpo}
The inherited order makes $\Pframe$ a dcpo, and all directed suprema in $\Pframe$ agree
with those in $F$. Thus $\Pframe\subseteq F$ is a sub-dcpo.
\end{theorem}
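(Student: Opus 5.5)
We follow the proof of \cref{thm:product-P-dcpo}. Since $\Pframe$ is countable, every directed subset has an increasing cofinal sequence, so it suffices to show that for every increasing sequence $p_0\leq p_1\leq\cdots$ in $\Pframe$ the supremum $p=\bigvee_F p_j$ lies in $\Pframe$. The ambient supremum is then also the supremum in $\Pframe$. If all $p_j$ equal $\Fbot$ there is nothing to prove. Otherwise we discard an initial segment and assume every $p_j$ is a nonbottom profile. Supports decrease along increasing sequences, so all supports are contained in the finite set $\supp(p_0)$.

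Next we assign certificates. For nonbottom $p$ we use its least stage $\operatorname{rk}_F(p)=\min\{m:p\in\Pstageframe{m}\}$. At rank zero we pick a marker coordinate $s$ with $p(s)=e_i$ and record $(0,s)$. Since $e_i\neq\Atop$ by \cref{lem:markers}(iii), we have $s\in\supp(p)$. At positive rank we pick a representation $p=\rowframe(\tau,q,n)$ from the first stage at which $p$ appears, with $q$ in an earlier stage, and record $(1,\hframe(\tau,q))$. By \cref{lem:rows} the value of $p$ at this coordinate is $t_n\neq\Atop$, so this coordinate is also in $\supp(p)$. Only finitely many certificates are possible, so some fixed certificate occurs infinitely often, and the corresponding subsequence is cofinal with the same supremum.

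\emph{Marker case.} The subsequence has $p_j(s)=e_{i_j}$ at a fixed coordinate $s$. Monotonicity gives $e_{i_j}\subseteq e_{i_{j'}}$ for $j<j'$. Because $(e_n)$ is an antichain by \cref{lem:markers}(iv), comparability forces $i_j$ to be constant, say $i$. By \eqref{eq:profile-join} the supremum is computed coordinatewise, so $p(s)=e_i$. Then $p\in F_0$ carries a marker value and $p\in\Pstageframe{0}$. This is where the antichain replaces the height argument of Section~5.

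\emph{Row case.} Injectivity of $\hframe$ identifies the pair $(\tau,q)$ from the private coordinate $k$, so the subsequence is $\rowframe(\tau,q,n_j)$. Its value at $k$ is $t_{n_j}$, and these values increase strictly in $n$ by \cref{lem:markers}(i), so $(n_j)$ is nondecreasing. If $(n_j)$ is bounded, it is eventually constant and $p$ is a retained row. If it is unbounded, then $p=q$ by \cref{lem:rows}, because the rows are increasing in $n$ and a cofinal subsequence has the same supremum. Here $q\in\Pstageframe{m}$ for an earlier stage $m$, so $q\in\Pframe$.

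The main obstacle is the row case. The certificate coordinate must determine the row family uniquely, even though a private coordinate may lie in the support of many other points. We handle this by fixing one chosen representation per point, so the certificate is a function of the point, and then relying on injectivity of $\hframe$ on eligible pairs. We must also check that the rows are increasing in $n$: each of $u^F_{\tau,n}$ and $z^F_{k,n}$ is increasing in $n$, and $q$ is fixed, so their coordinatewise intersection is increasing.
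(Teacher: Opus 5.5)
Your proof is correct and follows the paper's argument essentially step for step. The steps are the reduction to increasing sequences via countability, finitely many certificates inside $\supp(p_0)$, the antichain $(e_n)$ in the marker case, and injectivity of $\hframe$ with bounded or unbounded row levels in the row case. Your rank-based certificate choice coincides with the paper's, since a nonbottom point has rank zero exactly when it has a coordinate equal to some $e_i$. Reading off $p(s)=e_i$ directly from the cofinal subsequence is a harmless shortcut for the paper's sandwich argument.
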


\begin{proof}
Give every nonbottom retained profile a certificate. If it has a coordinate
with value $e_i$, choose one such coordinate and call it a marker
certificate. Otherwise choose a row representation
$p=\rowframe(\tau,q,n)$ from the least stage at which $p$ appears and use its
private coordinate as a row certificate. At that coordinate the value is
$t_n$. Injectivity of $\hframe$ means that a row-certificate coordinate determines
$(\tau,q)$, while the value $t_n$ determines the row level $n$ because the
sequence $(t_n)$ is strictly increasing.  Thus, after a certificate type and
coordinate have been fixed, the corresponding subsequence is controlled by a
single marker family or a single private row family.

Let $p_0\leq p_1\leq\cdots$ be increasing in $\Pframe$. If every term equals $\Fbot$, the conclusion is immediate. Otherwise discard the finitely many terms before the first nonbottom term and reindex. Let $p=\bigvee_F p_k$. Supports decrease, so
$\supp(p_k)\subseteq\supp(p_0)$ for all $k$, and therefore
$\supp(p)\subseteq\supp(p_0)$; in particular $p\in F_0$. Every certificate
coordinate also belongs to the finite set $\supp(p_0)$. There are only two
certificate types and only finitely many possible certificate coordinates.
Hence one certificate type and one coordinate occur on an infinite
subsequence. Its set of indices is unbounded, so the subsequence is cofinal
in the original sequence and has the same ambient supremum $p$.

In the marker case, at a fixed coordinate $s$ we have
$p_{k_j}(s)=e_{i_j}$. Comparability gives
$e_{i_j}\subseteq e_{i_{j+1}}$, and the antichain property forces all
$i_j$ to equal one $i$. For $k_j\leq k\leq k_{j+1}$,
\(e_i=p_{k_j}(s)\subseteq p_k(s)\subseteq p_{k_{j+1}}(s)=e_i\).
Thus the original sequence is eventually $e_i$ at $s$. Consequently the
ambient supremum $p$ satisfies $p(s)=e_i$; since $p\in F_0$, it follows from
\eqref{eq:P0} that $p\in \Pstageframe{0}$.

In the row case, the fixed private coordinate determines one pair
$(\tau,q)$, so the cofinal subsequence is $\rowframe(\tau,q,n_j)$. At the private
coordinate its values are $t_{n_j}$; comparability makes $(n_j)$
nondecreasing. If it is bounded, it is eventually constant, so the cofinal
subsequence, and hence the original sequence, has a retained row as its
supremum. If it is unbounded, it is cofinal in the full row sequence and has
supremum $q$ by \cref{lem:rows}; this $q$ lies in an earlier construction
stage. Hence every increasing sequence has its $F$-supremum in $\Pframe$.

Since $\Pframe$ is countable, every directed subset has an increasing cofinal
sequence. Its supremum therefore belongs to $\Pframe$ and is the same in $\Pframe$ and
$F$.
\end{proof}

\subsection{Preservation of the Scott-closed-set lattice}\label{subsec:trace-isomorphism}

The private rows and marker system now verify the two sufficient conditions
introduced in \cref{def:directed-lifting-bridge-density}.

\begin{proposition}\label{prop:frame-approximation-properties}
The inclusion $\Pframe\subseteq F$ has directed lifting and bridge density.
\end{proposition}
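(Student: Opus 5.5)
The plan is to verify the two conditions of \cref{def:directed-lifting-bridge-density} directly. Bridge density will come from the marker system $\Pstageframe{0}$ together with \cref{lem:markers}(iii). Directed lifting will come from the private rows, using the same pattern as \cref{prop:product-directed-lifting}.

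\emph{Bridge density.} Fix $x\in F$. If $x=\Fbot$, take $V_x=\{\Fbot\}$ and $p_v=\Fbot\in\Pstageframe{0}$. Otherwise $x\in F_0$, and I choose $r\in S\setminus\supp(x)$, so that $x(r)=\Atop$. For $n\in\N$:
\begin{itemize}[leftmargin=2.2em]
\item let $v_n$ agree with $x$ except that $v_n(r)=t_n$;
\item let $p_n$ agree with $x$ except that $p_n(r)=e_n$.
\end{itemize}
By \cref{lem:markers}(i), the sequence $(v_n)$ is increasing and, by \eqref{eq:profile-join}, has supremum $x$; so $V_x=\{v_n\}$ is directed with $\bigvee V_x=x$. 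By \cref{lem:markers}(iii), $t_n\subseteq e_n\subseteq\Atop$, which gives $v_n\leq p_n\leq x$. Each $p_n$ has finite support and a coordinate equal to $e_n$, so $p_n\in\Pstageframe{0}\subseteq\Pframe$ by \eqref{eq:P0}.

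\emph{Directed lifting.} Let $E\subseteq F$ be directed and let $p\in\Pframe$ satisfy $p\leq\bigvee E$. If $p=\Fbot$, take $B=\{\Fbot\}$.

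Otherwise $\bigvee E\neq\Fbot$, so $E$ contains a profile $e_0$. I then set:
\begin{itemize}[leftmargin=2.2em]
\item $E'=E\cap\up e_0$, which is directed and cofinal in $E$, consists of profiles, and has the same supremum $a=\bigvee E'$, computed coordinatewise by \eqref{eq:profile-join};
\item $H=\supp(e_0)$.
\end{itemize}
Every member of $E'$ has value $\Atop$ outside $H$, so $\supp(a)\subseteq H$ and $\tau=(a,H)\in\Tframe$. Since $p\in F_0$ and $p\leq a$, the pair $(\tau,p)$ is eligible. If $p\in\Pstageframe{m}$, then every $\rowframe(\tau,p,n)$ lies in $\Pstageframe{m+1}\subseteq\Pframe$. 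By \cref{lem:rows}, these rows form an increasing sequence with supremum $p$, and I take $B=\{\rowframe(\tau,p,n):n\in\N\}$.

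It remains to show that each row lies below some member of $E$. By \eqref{eq:row}, $\rowframe(\tau,p,n)\leq u^F_{\tau,n}$, so it suffices to place $u^F_{\tau,n}$ below a member of $E'$.
\begin{itemize}[leftmargin=2.2em]
\item For $s\in H$, the set $u^F_{\tau,n}(s)=a(s)\cap t_n$ is finite by \cref{lem:markers}(i) and is contained in $a(s)=\bigcup_{e\in E'}e(s)$. Hence each of its finitely many points lies in some $e(s)$.
\item Since $H$ is finite, directedness of $E'$ gives a single $e\in E'$ with $a(s)\cap t_n\subseteq e(s)$ for all $s\in H$.
\item Outside $H$, both $u^F_{\tau,n}$ and $e$ have value $\Atop$.
\end{itemize}
Thus $u^F_{\tau,n}\leq e$. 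This is the finite-approximation argument of \cref{lem:canonical-approximants}, applied to a directed set rather than a sequence.

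The only delicate points are bookkeeping. One must discard $\Fbot$ from $E$ before computing the join coordinatewise. One must also check eligibility, which needs both $p\leq a$ and $\supp(a)\subseteq H$. The rest is routine.
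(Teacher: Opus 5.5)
Your proof is correct and takes the paper's approach: the bridge-density half is the paper's argument, and for directed lifting the only difference is how $H$ is obtained (the paper passes to an increasing cofinal sequence in the countable $F$ and invokes \cref{lem:canonical-approximants}, whereas you restrict to $E\cap\up e_0$ with $H=\supp(e_0)$, as in \cref{prop:product-directed-lifting}, which avoids countability). As a cosmetic point, rename the profile $e_0\in E$, because $e_n$ already denotes the marker antichain $D_{n+2}$ that you use in the bridge-density step.
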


\begin{proof}
For directed lifting, let $E\subseteq F$ be directed, put $a=\bigvee E$, and
let $q\in \Pframe$ satisfy $q\leq a$. If $q=\Fbot$, use $\{\Fbot\}$. Otherwise,
$F$ is countable by \cref{thm:F-frame}, so choose an increasing cofinal
sequence $(y_k)$ in $E$ and delete its initial bottom terms. The supports
stabilize at a finite set $H$, necessarily containing $\supp(a)$. By
\cref{lem:canonical-approximants}, for each $n$ there is $k(n)$ with
$u^{F}_{(a,H),n}\leq y_{k(n)}$. Since $q\leq a$, the pair $((a,H),q)$ is eligible.
Let $b_n=\rowframe((a,H),q,n)$. If $q\in \Pstageframe{m}$, then $b_n\in \Pstageframe{m+1}$ by
\eqref{eq:Pm}; moreover, $b_n\leq y_{k(n)}$ and $\bigvee_n b_n=q$ by
\cref{lem:rows}. Thus $(b_n)$ witnesses directed lifting.

For bridge density, the bottom element is immediate. Given $a\in F_0$, choose
$r\notin\supp(a)$ and define
\[
  v_n(s)=
  \begin{cases}
    t_n,&s=r,\\
    a(s),&s\neq r,
  \end{cases}
  \qquad
  c_n(s)=
  \begin{cases}
    e_n,&s=r,\\
    a(s),&s\neq r.
  \end{cases}
\]
Then $v_n\leq c_n\leq a$, the point $c_n$ belongs to $\Pstageframe{0}$, and
$\bigvee_n v_n=a$. Hence $(v_n,c_n)$ witnesses bridge density. This is the
only place where the inclusions $t_n\subseteq e_n$ are used.
\end{proof}

\begin{theorem}\label{thm:Gamma-isomorphism}
The restriction map is an order isomorphism
\[
  \rho:\Gamma(F)\longrightarrow\Gamma(\Pframe),
  \qquad \rho(C)=C\cap \Pframe.
\]
Its inverse is $C\mapsto\cl_{\sigma(F)}(C)$.
\end{theorem}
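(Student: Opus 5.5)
This theorem is a direct assembly of results already established, mirroring the proof of \cref{thm:product-Gamma-isomorphism}. By \cref{thm:P-dcpo}, $\Pframe$ is a sub-dcpo of $F$, so \cref{thm:closure-characterization} applies. That theorem reduces the claim to verifying the two closure identities \eqref{eq:SD} and \eqref{eq:SC} for the inclusion $\Pframe\subseteq F$. Once they hold, it also supplies the inverse $C\mapsto\cl_{\sigma(F)}(C)$.

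Both identities follow from \cref{prop:frame-approximation-properties} combined with \cref{prop:directed-lifting-bridge-density}. Bridge density gives \eqref{eq:SD}: for each $x\in F$, the witnesses $v_n\le c_n\le x$ with $c_n\in\Pframe$ put every $v_n$ into $\cl_{\sigma(F)}(\Pframe\cap\down_F x)$, hence also their supremum $x$. Directed lifting gives \eqref{eq:SC}, via the Scott-closed set $H_C=\{x\in F:\Pframe\cap\down_F x\subseteq C\}$, whose trace on $\Pframe$ is exactly $C$.

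There is no real obstacle at this stage; the substantive work was done in \cref{thm:P-dcpo} and \cref{prop:frame-approximation-properties}. The only point to check is that the bottom element $\Fbot$ causes no trouble. It lies in $\Pstageframe{0}$, both approximation properties treat it separately with the trivial witness $\{\Fbot\}$, and for $x=\Fbot$ condition \eqref{eq:SD} holds trivially.
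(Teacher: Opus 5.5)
Your proposal is correct and matches the paper's proof. The paper likewise applies \cref{prop:directed-lifting-bridge-density} to the sub-dcpo $\Pframe$ of \cref{thm:P-dcpo}, using the approximation properties of \cref{prop:frame-approximation-properties}: bridge density gives \eqref{eq:SD}, directed lifting gives \eqref{eq:SC}, and the inverse $C\mapsto\cl_{\sigma(F)}(C)$ comes from \cref{thm:closure-characterization}; your check on $\Fbot$ is a harmless sanity check already covered by those results.
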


\begin{proof}
Apply \cref{prop:directed-lifting-bridge-density} to
\cref{thm:P-dcpo,prop:frame-approximation-properties}.
\end{proof}

\begin{proposition}\label{prop:P-not-isomorphic-F}
The dcpo $\Pframe$ has no greatest element. Therefore $\Pframe\not\cong F$.
\end{proposition}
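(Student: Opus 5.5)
The plan mirrors \cref{prop:product-P-basic}. Since $F$ has the greatest element $\Ftop$, it suffices to show that $\Pframe$ has no greatest element, and that in turn will follow from $\Ftop\notin\Pframe$ together with a direct incomparability argument.

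\emph{First step: every nonbottom retained profile has nonempty support.} I will prove this by induction on the construction stage. A marker point $p\in\Pstageframe{0}\setminus\{\Fbot\}$ has a coordinate with value $e_i\neq\Atop$, by \cref{lem:markers}(iii), so its support is nonempty. A row $\rowframe(\tau,q,n)$ has value $t_n$ at its private coordinate, by \cref{lem:rows}. Since $t_n$ is finite while $\Atop=\RR$ is infinite, we have $t_n\neq\Atop$, so the row also has nonempty support. It follows that $\Ftop\notin\Pframe$.

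\emph{Second step: no element of $\Pframe$ is greatest.} Fix $p\in\Pframe$. If $p=\Fbot$, any marker profile lies strictly above it, so $p$ is not greatest. Otherwise $p\in F_0$ and $\supp(p)$ is finite and nonempty. I choose $r\in S\setminus\supp(p)$ and define $c$ to be the profile with value $e_0$ at $r$ and $\Atop$ elsewhere. Then $c\in\Pstageframe{0}$. Now pick any $s\in\supp(p)$. We have $s\neq r$, so $c(s)=\Atop\not\subseteq p(s)$, and therefore $c\not\leq p$. Hence $p$ is not greatest.

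\emph{Conclusion.} The argument is routine. The only point needing care is the first step, namely confirming that rows never reach $\Atop$ at their certificate coordinate. This holds because every $t_n$ is a finite proper subset of $\RR$. An order isomorphism preserves greatest elements, $F$ has one and $\Pframe$ does not, so $\Pframe\not\cong F$.
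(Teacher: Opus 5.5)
Your proof is correct and takes the same route as the paper. You first exclude $\Ftop$ from $\Pframe$, since markers carry some $e_i\neq\Atop$ and rows carry $t_n\neq\Atop$ at their private coordinate; then, for a nonbottom $p$, you exhibit the marker with value $e_0$ at a coordinate outside $\supp(p)$ and $\Atop$ elsewhere as an element of $\Pframe$ not below $p$ (your remark that $t_n$ is finite while $\RR$ is infinite just spells out a detail the paper leaves implicit).
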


\begin{proof}
First, $\Ftop\notin \Pframe$: it has no marker coordinate, so it is not in $\Pstageframe{0}$,
and every nonbottom row has value $t_n\neq\Atop$ at its private coordinate.
This persists through all stages $\Pstageframe{m}$.

If $p=\Fbot$, any nonbottom marker profile is not below $p$. Let
$p\neq\Fbot$. Since $p\neq\Ftop$, its support is nonempty and finite. Choose
$r\notin\supp(p)$ and let $m_r$ have value $e_0$ at $r$ and $\Atop$
elsewhere. Then $m_r\in \Pstageframe{0}$, but for any $s\in\supp(p)$ one has
$m_r(s)=\Atop\not\subseteq p(s)$, so $m_r\not\leq p$. Thus $\Pframe$ has no
greatest element, whereas $F$ has greatest element $\Ftop$.
\end{proof}

\begin{table}[!htbp]
\centering
\caption{The common finite-support mechanism in the two counterexamples.}
\label{tab:construction-comparison}
\small
\renewcommand{\arraystretch}{1.15}
\begin{tabularx}{\textwidth}{@{}>{\raggedright\arraybackslash}p{0.21\textwidth}>{\raggedright\arraybackslash}X>{\raggedright\arraybackslash}X@{}}
\toprule
Role & Product construction (Section~5) & Frame construction (Section~6) \\
\midrule
Ambient dcpo & $K\times\Ichain$ & $F=\{\Fbot\}\cup F_0$ \\
Retained stages & $\Pstageprod{m}$, with union $\Pprod$ & $\Pstageframe{m}$, with union $\Pframe$ \\
Marker stability & Opposite monotonicities of $n$ and $\beta_i(n)$ & The antichain $(e_n)$ \\
Private row level & Value $n$ at $\hprod(\tau)$ & Value $t_n$ at $\hframe(\tau,q)$ \\
Unbounded row levels & Supremum is a previously retained endpoint & Supremum is a previously retained endpoint \\
Closure identities & Marker density and directed lifting & Bridge density and directed lifting \\
\bottomrule
\end{tabularx}
\end{table}
\FloatBarrier

\begin{theorem}\label{thm:countable-frame-counterexample}
There exist a countable frame $F$ and a countable dcpo $\Pframe$ such that
\[
  \Gamma(F)\cong\Gamma(\Pframe),
  \qquad F\not\cong \Pframe.
\]
Moreover, $\Sigma F$ is sober. Hence a sober countable frame need not be
$C_{\sigma}$-unique.
\end{theorem}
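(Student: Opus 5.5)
The theorem is essentially an assembly of the results of this section, so I will argue by collecting them in order. Take $F=\{\Fbot\}\cup F_0$ from \cref{subsec:profile-frame} and $\Pframe$ from \cref{subsec:private-rows}. First, \cref{thm:F-frame} gives that $F$ is a countable frame. The construction of $\Pframe$ as a countable union of the stages $\Pstageframe{m}$ shows that $\Pframe$ is countable, because $\Pstageframe{0}\subseteq F$ is countable, $\Tframe$ is countable, and each stage adds countably many rows indexed by $\Tframe\times\Pstageframe{m}\times\N$. \cref{thm:P-dcpo} shows that $\Pframe$ is a sub-dcpo of $F$, and hence a dcpo in its own right.

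Next, \cref{thm:Gamma-isomorphism} supplies the order isomorphism $\rho:\Gamma(F)\to\Gamma(\Pframe)$. That result came from \cref{prop:directed-lifting-bridge-density} applied to \cref{prop:frame-approximation-properties}. This gives $\Gamma(F)\cong\Gamma(\Pframe)$. \cref{prop:P-not-isomorphic-F} shows that $\Pframe$ has no greatest element, while $F$ has top $\Ftop$. Since an order isomorphism preserves greatest elements, $F\not\cong\Pframe$.

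For sobriety I would invoke the remark preceding \cref{subsec:countable-frames}'s Rado construction. Every frame is a meet-continuous complete lattice, and by \cite[Theorem~1.1]{XuJi2026} countable meet-continuous complete lattices have sober Scott spaces, so $\Sigma F$ is sober.

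Finally, if $F$ were $C_{\sigma}$-unique, then $\Gamma(F)\cong\Gamma(\Pframe)$ would force $F\cong\Pframe$, which contradicts the previous step. So $F$ is a sober countable frame, and in particular a sober countable complete lattice, that is not $C_{\sigma}$-unique. Equivalently, by \cref{thm:intrinsic-rigidity} this follows because $\Pframe$ is a proper $2$-equable sub-dcpo. The only point needing care is the sobriety step. It relies on the external theorem, so I would double-check that its hypotheses are just countability and meet-continuity, which every frame satisfies by the frame distributive law. Everything else is a direct citation of the earlier results.
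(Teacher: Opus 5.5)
Your proposal is correct and follows the paper's proof: you combine \cref{thm:F-frame}, \cref{thm:P-dcpo}, \cref{thm:Gamma-isomorphism} and \cref{prop:P-not-isomorphic-F}, distinguish $F$ from $\Pframe$ by the greatest element, and get sobriety of $\Sigma F$ from \cite[Theorem~1.1]{XuJi2026} via meet-continuity of frames. Your added remark is also consistent with the paper: since $F$ is sober it lies in $\Rtwo$, so the conclusion is an instance of \cref{thm:intrinsic-rigidity} with $\Pframe$ as a proper $2$-equable sub-dcpo.
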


\begin{proof}
The poset $F$ is a countable frame by \cref{thm:F-frame}, and $\Pframe$ is a
countable dcpo by \cref{thm:P-dcpo}. The isomorphism of Scott-closed-set
lattices is \cref{thm:Gamma-isomorphism}, while
\cref{prop:P-not-isomorphic-F} gives $F\not\cong \Pframe$. Finally, every
countable frame is Scott sober \cite[Theorem~1.1]{XuJi2026}.
\end{proof}

This answers \cite[Problems~5 and~6]{Zhao2022Survey} negatively, even for countable complete Heyting algebras.

\section{Conclusion and further questions}\label{sec:conclusion}

The positive and negative results use complementary reconstruction arguments. Local domination and directed generation yield the arbitrary-product theorem and the $C_{\sigma}$-uniqueness of $\J^I$. Independently, the two-element reflection and the closure criterion identify the proper sub-dcpos used in the counterexamples. \Cref{tab:questions} records the answers to the cited questions.

\begin{table}[!htbp]
\centering
\caption{Questions answered by the main results.}
\label{tab:questions}
\small
\renewcommand{\arraystretch}{1.15}
\begin{tabularx}{\textwidth}{@{}>{\raggedright\arraybackslash}p{0.24\textwidth}>{\raggedright\arraybackslash}X>{\raggedright\arraybackslash}p{0.24\textwidth}@{}}
\toprule
Source & Question & Answer \\
\midrule
\cite[Problem~9]{Zhao2022Survey}; \cite[Problem~1]{Zhao2025Johnstone}
& Is $\J^{\N}$ $C_{\sigma}$-unique?
& Yes, for every power $\J^I$; \cref{thm:J-arbitrary-power}. \\
\cite[Problem~8]{Zhao2022Survey}
& Do binary products preserve $C_{\sigma}$-uniqueness?
& No, even with sober factors and sober product; \cref{thm:product-counterexample}. \\
\cite[Problems~5 and~6]{Zhao2022Survey}
& Is every complete lattice, or every sober countable complete lattice, $C_{\sigma}$-unique?
& No, even for a sober countable frame; \cref{thm:countable-frame-counterexample}. \\
\bottomrule
\end{tabularx}
\end{table}
\FloatBarrier

Two natural problems remain.

\begin{question}\label{q:locally-compact-sober}
Is every locally compact sober dcpo $C_{\sigma}$-unique?
\end{question}

\begin{question}
\label{q:product-stability}
What structural conditions on a family \((D_i)_{i\in I}\) of
\(C_{\sigma}\)-unique dcpos guarantee that
\(
  \prod_{i\in I}D_i
\)
is again \(C_{\sigma}\)-unique?
\end{question}

The first problem asks whether local compactness and sobriety suffice for reconstruction from Scott-closed sets. The second asks for a sharp boundary between product-stable and product-unstable classes. More generally, it would be useful to replace the present global obstruction by local criteria expressed through compact elements, way-below relations, or other finite approximation data.

\end{document}